\def\RR{\mathbb{R}}
\def\ZZ{\mathbb{Z}}
\def\calP{{\cal P}}
\def\nknots{N}
\def\npatches{M}
\def\hmin{h_\mathrm{min}}
\def\pnew{p}
\def\prec{t}
\def\rbar{r}
\def\Cgen{{\mathfrak{C}}}
\def\Cupper{{\mathfrak{C}}}
\def\Ddom{\Omega}
\def\Pdom{\widetilde{\Omega}}
\def\dpartial{\tilde{\partial}}
\def\knots{\Xi}
\def\dknots{{\bf \Xi}}
\newcommand{\C}[1]{ \frac{h}{2\sqrt{(#1+1)(#1+2)}}}
\newtheorem{theorem}{Theorem}
\newtheorem{lemma}{Lemma}
\newtheorem{corollary}{Corollary}
\theoremstyle{remark}\newtheorem{remark}{Remark}
\theoremstyle{remark}\newtheorem{example}{Example}
\begin{document}

\title{Explicit error estimates for spline approximation of arbitrary smoothness in isogeometric analysis}
%
\author[]{Espen Sande\thanks{sande@mat.uniroma2.it}$~$}
\author[]{Carla Manni\thanks{manni@mat.uniroma2.it}$~$}
\author[]{Hendrik Speleers\thanks{speleers@mat.uniroma2.it}}
\affil[]{\small Department of Mathematics, University of Rome Tor Vergata, Italy}

\maketitle

\begin{abstract}
In this paper we provide a priori error estimates with explicit constants for both the $L^2$-projection and the Ritz projection onto spline spaces of arbitrary smoothness defined on arbitrary grids. This extends the results recently obtained for spline spaces of maximal smoothness.
The presented error estimates are in agreement with the numerical evidence found in the literature that smoother spline spaces exhibit a better approximation behavior per degree of freedom, even for low smoothness of the functions to be approximated.
First we introduce results for univariate spline spaces, and then we address multivariate tensor-product spline spaces and isogeometric spline spaces generated by means of a mapped geometry, both in the single-patch and in the multi-patch case.
\end{abstract}

\section{Introduction}
Spline approximation is a classical topic in approximation theory; we refer the reader to the book \cite{Schumaker2007} for an extended bibliography. Moreover, it has recently received a renewed interest within the emerging field of isogeometric analysis (IGA); see the book \cite{Cottrell:09}. In this context, a priori error estimates in Sobolev (semi-)norms and corresponding projectors for suitably chosen spline spaces are important. 

Classical a priori error estimates for spline approximation are explicit in the grid spacing but hide the influence of the smoothness and the degree of the spline space. Such structure, however, is not sufficient for the IGA environment.
In particular, IGA allows for a rich assortment of refinement strategies \cite{Cottrell:09}, combining grid refinement ($h$) and/or degree refinement ($p$) with various interelement smoothness ($k$). To fully exploit the benefits of the so-called $h$-$p$-$k$ refinement, it is necessary to understand how all the parameters involved (i.e., the grid spacing, the degree, and the smoothness) affect the error estimate.
Furthermore, it is important to unravel the influence of the geometry map in isogeometric approximation schemes, not only for its effect on the accuracy but also because it helps in defining good mesh quality metrics \cite{Engvall:preprint}.

Besides their prominent interest for analyzing convergence under different kinds of refinements, error estimates for approximation in suitable reduced spline spaces play a less evident but still pivotal role in other aspects of IGA discretizations, such as the design of fast iterative (multigrid) solvers for the resulting linear systems \cite{Hofreither:2017,Takacs:2018}. The convergence rate of fast iterative solvers should ideally be independent of all the parameters involved, and so their explicit impact on the estimates is important to understand.

In the context of IGA, the role of the smoothness and the degree in spline approximation has been theoretically investigated for the first time in \cite{Buffa:11}, providing explicit error estimates for spline spaces of smoothness $k$ and degree $p\geq 2k+1$. The important case of maximal smoothness ($k=p-1$) has been recently addressed for uniform grid spacing in \cite{Takacs:2016} and for general grid spacing in \cite{Sande:2019}, where improved error estimates have been achieved as well. The above references all deal with both univariate and multivariate spline spaces.

In this paper we provide a priori error estimates with explicit constants for approximation by spline functions of arbitrary smoothness defined on arbitrary knot sequences. Besides filling the gap of the smoothness that is not yet covered in the literature, our results also improve upon the error estimates in \cite{Buffa:11,Takacs:2016,Sande:2019}. 
The key ingredient to get our results is the representation of the considered Sobolev spaces and the approximating spline spaces in terms of integral operators described by suitable kernels \cite{Pinkus:85}.
By using this representation we provide an abstract framework that converts explicit constants in polynomial approximation to explicit constants in spline approximation.
We consider error estimates for both univariate and multivariate spline spaces, and we also allow for a mapped geometry.
After a short description of some preliminary notation, the main theoretical contributions and the structure of the paper are outlined in the next subsections.

\subsection{Preliminary notation}
For $k\geq0$, let $C^k[a,b]$ be the classical space of functions with continuous derivatives of order $0,1,\ldots,k$ on the interval~$[a,b]$.
We further let $C^{-1}[a,b]$ denote the space of bounded, piecewise continuous functions on $[a,b]$ that are discontinuous only at a finite number of points.

Suppose $\knots:= (\xi_0,\ldots,\xi_{\nknots+1})$ is a sequence of (break) points
such that
\begin{equation*}
a=:\xi_0 < \xi_1 < \cdots < \xi_{\nknots} < \xi_{\nknots+1}:= b,
\end{equation*}
and let
\begin{equation}\label{eq:hmax}
h:=\max_{j=0,\ldots,\nknots} (\xi_{j+1}-\xi_j).
\end{equation}
Moreover, set $I_j := [\xi_j,\xi_{j+1})$,
$j=0,1,\ldots,\nknots-1$, and $I_\nknots := [\xi_\nknots,\xi_{\nknots+1}]$.
For any $p \geq 0$, let $\calP_p$ be the space of polynomials of
degree at most $p$. Then, for $-1\leq k\leq p-1$, we define the space $\mathcal{S}^k_{p,\knots}$ of splines of degree $p$ and smoothness $k$ by
\begin{equation*}
\mathcal{S}^k_{p,\knots } := \{s \in C^{k}[a,b] : s|_{I_j} \in \calP_p,\, j=0,1,\ldots,\nknots \},
\end{equation*}
and we set
\begin{equation*}
\mathcal{S}_{p,\knots} := \mathcal{S}^{p-1}_{p,\knots}.
\end{equation*}
With a slight misuse of terminology, we will refer to $\knots$ as knot sequence and to its elements as knots.

For real-valued functions $f$ and $g$ we denote the norm and inner product on $L^2(a,b)$ by
\begin{equation*}
\| f\|^2 := (f,f), \quad (f,g) := \int_a^b f(x) g(x) dx,
\end{equation*}
and we consider the Sobolev spaces
\begin{equation*}
H^r(a,b):=\{u\in L^2(a,b) : \partial^\alpha u \in L^2(a,b),\, \alpha=1,\ldots,r\}.
\end{equation*}
We use the notation $S_p^k: L^2(a,b)\to \mathcal{S}_{p,\knots}^k$ and  $S_p: L^2(a,b)\to \mathcal{S}_{p,\knots}$ for the $L^2$-projector onto spline spaces, while $P_p: L^2(a,b)\to \calP_p$ stands for the $L^2$-projector onto the polynomial space $\mathcal{P}_{p}$.

\subsection{Main results: univariate case}
In this paper we focus on general spline spaces of degree $p$, smoothness $k$, and arbitrary knot sequence $\knots$. We first derive the following (simplified) error estimate:
\begin{equation}\label{eq:low-order-intro}
\|u-S^k_pu\|\leq\left(\frac{e\, h}{4(p-k)}\right)^r\|\partial^ru\|,
\end{equation}
for any $u\in H^r(a,b)$ and all $p\geq r-1$. Here $e$ is Euler's number.
We refer the reader to Remark~\ref{rmk:low-order-simple}, Theorem~\ref{thm:low-order}, and Corollary~\ref{cor:low-order} for sharper results.
We then show that similar error estimates hold for standard Ritz projections and their derivatives; see Remark~\ref{rmk:Ritz-lower-simple} and Corollary~\ref{cor:Ritz-lower}.

The inequality in \eqref{eq:low-order-intro} does not only cover the univariate result from \cite{Buffa:11}, but also improves upon it by allowing any smoothness; in particular, the most interesting cases of highly smooth spline spaces are embraced.
As already pointed out in \cite{Buffa:11}, a simple error estimate like \eqref{eq:low-order-intro} is not able to give a theoretical explanation for the numerical evidence that smoother spline spaces exhibit a better approximation behavior per degree of freedom. On the other hand, the sharper estimate provided in Theorem~\ref{thm:low-order} improves per degree of freedom as the smoothness of the spline spaces increase; see Remark~\ref{rmk:comparison} (and Figure~\ref{fig:comparison-max-r}). 
Even though this does not prove the superior approximation per degree of freedom of smoother spline spaces, the presented error estimates are a step towards a complete theoretical understanding of the numerical evidence found in the literature.
For uniform knot sequences, it has been formally shown in \cite{Bressan:2019} that $C^{p-1}$ spline spaces perform better than $C^0$ and $C^{-1}$ spline spaces in almost all cases of practical interest. A similar approximation behavior per degree of freedom is observed for the Ritz projections; see Remark~\ref{rmk:comparison-Ritz} (and Figure~\ref{fig:comparison-Ritz-max-r}).

For maximally smooth spline spaces, the best known error estimate for the $L^2$-projection is given by
\begin{equation} \label{eq:Sande-intro}
\|u-S_pu\|\leq \left(\frac{h}{\pi}\right)^{r}\|\partial^r u\|,
\end{equation}
for any $u\in H^r(a,b)$ and all $p\geq r-1$.
This estimate has been recently proved in \cite{Sande:2019}. Note that the same error estimate also holds for periodic functions/splines \cite{Sande:2019,Shadrin:90en}, for which it has been shown to be optimal on uniform knot sequences \cite{Floater:per,Pinkus:85,Sande:2019}.

It is easy to see that \eqref{eq:Sande-intro} is sharper than \eqref{eq:low-order-intro} for $k=p-1$. Nevertheless, for fixed $r$, this estimate only ensures convergence in $h$, and not in $p$.
The role of the grid spacing and the degree is made more clear in the following estimate:
\begin{equation}\label{eq:harmonic-intro}
\|u-S_pu\| \leq \left(\frac{2eh(b-a)}{e\pi(b-a)+4h(p+1)}\right)^r\|\partial^ru\|,
\end{equation}
for any $u\in H^r(a,b)$ and all $p\geq r-1$; see Remark~\ref{rmk:smoothL2-harmonic}. For small $r$ compared to $p$, a better estimate is formulated in Remark~\ref{rmk:smoothL2-harmonic-smallr}.
The general result, covering both \eqref{eq:Sande-intro} and \eqref{eq:harmonic-intro}, can be found in Corollary~\ref{cor:smoothL2}. Similar estimates hold for Ritz projections and their derivatives; see Remark~\ref{rmk:smoothRitz-harmonic} and Corollary~\ref{cor:smoothRitz}.
The $p$-dependence has also been strengthened for the arbitrarily smooth case in Corollary~\ref{cor:low-order}.

Motivated by their use in the analysis of fast iterative solvers for linear systems arising from spline discretization methods~\cite{Hofreither:2017}, we also provide error estimates for approximation in suitable reduced spline spaces; see Theorems~\ref{thm:Spi} and~\ref{thm:Spi-tilde}.

\subsection{Main results: multivariate case}
The univariate results can be extended to obtain error estimates for approximation in multivariate isogeometric spline spaces.
As common in the related literature \cite{Bazilevs:2006,BeiraoDaVeiga:2012,Buffa:14}, we  first address standard tensor-product spline spaces, then investigate the effect of single-patch geometries for isogeometric spline spaces, and finally discuss $C^0$ multi-patch geometries.
In all cases we provide a priori error estimates with explicit constants, 
highlighting all the actors that play a role in the construction of the considered spline spaces: the knot sequences, the degrees, the smoothness, and the possible geometry map.

For tensor-product spline spaces we provide error estimates for $L^2$ and Ritz projections in Theorems~\ref{thm:tensorL2} and~\ref{thm:tensorRitz}, respectively. In case of single-patch geometries, we do not confine ourselves to the plain isoparametric context which is typical in IGA \cite{Cottrell:09}, i.e., the
same space that generates the geometry is mapped to the physical domain, but we allow for possibly different spaces for the geometry representation and the function approximation. In the first instance, we assume geometric mappings that are sufficiently globally smooth; see Theorem~\ref{thm:mapL2} and Example~\ref{ex:mapRitz,r=2}. Afterwards, we also provide error estimates for mappings generated by more general geometry function classes that include spline spaces and NURBS spaces of arbitrary smoothness; see Theorem~\ref{thm:mapL2-gen}  and Example~\ref{ex:splinemapRitz,r=2}. In this perspective, following the literature \cite{BeiraoDaVeiga:2012,Buffa:14}, we introduce suitable bent Sobolev spaces, so as to accommodate a less smooth setting for the geometry. 
We explicitize the role of the (derivatives of the) geometry map in the constants of the error estimates, both for $L^2$ and Ritz projections.
Finally, to deal with the $C^0$ multi-patch setting, we consider a projector that is local to each of the patches and is closely related to the standard Ritz projector.  
Indeed, since the global isogeometric space is continuous, we cannot directly use standard $L^2$-projectors as local building blocks on the patches. Instead, we choose each of the projectors to be interpolatory on the patch boundaries \cite{Buffa:14,Takacs:2018} so that they can be easily combined into a continuous global projector. We provide explicit error estimates for the new local projectors, which immediately give rise to the desired estimates for the global one; see Example~\ref{ex:splinemapQ,r=2}.

Even though the multivariate results emanate from the univariate ones by following arguments similar to those already presented in the literature, see \cite{Bazilevs:2006,BeiraoDaVeiga:2012,Buffa:14,Hughes:18,Takacs:2018} and references therein, the novelty of the provided error estimates is twofold:
\begin{itemize}
 \item they are expressed in terms of explicit constants and cover arbitrary smoothness;
 \item they hold for a certain (mapped) Ritz projector which is very natural in the context of Galerkin methods.  
\end{itemize}
It is also worthwhile to note that, although the current investigation has been mainly motivated by IGA applications, standard $C^0$ tensor-product finite elements are included as special cases.

\subsection{Outline of the paper}
The remainder of this paper is organized as follows. In Section~\ref{sec:gen} we introduce a general framework for dealing with a priori error estimates in standard Sobolev (semi-)norms for $L^2$ and Ritz projections onto univariate finite dimensional spaces represented in terms of integral operators described by a suitable kernel. Based on these results, error estimates with explicit constants are provided for spline spaces of arbitrary smoothness in Section~\ref{sec:low}, and further investigated for the salient case of spline spaces of maximal smoothness in Section~\ref{sec:max}. Section~\ref{sec:reduced} addresses certain reduced spline spaces which can be of interest in several contexts.
Then, we extend those univariate results to the multivariate setting. Standard tensor-product spline spaces are considered in Section~\ref{sec:tensor}, while  
isogeometric spline spaces defined on mapped (single-patch) geometries are covered in Section~\ref{sec:map}; we provide explicit expressions for all the involved constants.
In Section~\ref{sec:multipatch} we discuss a particular Ritz-type projector and related error estimates for isogeometric spline spaces on $C^0$ multi-patch geometries. Finally, we conclude the paper in Section~\ref{sec:conclusion} by summarizing the main theoretical results.

\section{General error estimates}\label{sec:gen}
In this section we describe a general framework to obtain error estimates for the $L^2$-projection and the Ritz projection onto spaces defined in terms of integral operators.

\subsection{General framework}
For $f\in L^2(a,b)$, let $K$ be the integral operator
\begin{equation}\label{eq:K}
 K f(x) := \int_a^b K(x,y) f(y) dy.
\end{equation}
As in \cite{Pinkus:85}, we use the notation $K(x,y)$ for the kernel of~$K$. We will in this paper only consider kernels that are continuous or piecewise continuous.
We denote by $K^*$ the adjoint, or dual, of the operator $K$,
defined by
\begin{equation*}
 (f,K^\ast g) = (Kf, g).
\end{equation*}
The kernel of $K^\ast$ is $K^\ast(x,y) = K(y,x)$.

Given any finite dimensional subspace $\mathcal{Z}_0\supseteq\mathcal{P}_0$ of $L^2(a,b)$ and any integral operator $K$, we let $\mathcal{Z}_\prec$ for $\prec\geq 1$ be defined by $\mathcal{Z}_\prec:=\mathcal{P}_0+K(\mathcal{Z}_{\prec-1})$. We further assume that they satisfy the equality
\begin{equation}\label{eq:Xsimpl}
\mathcal{Z}_\prec:=\mathcal{P}_0+K(\mathcal{Z}_{\prec-1}) =\mathcal{P}_0+K^*(\mathcal{Z}_{\prec-1}),
\end{equation}
where the sums do not need to be orthogonal (or even direct).
Moreover, let $Z_\prec$ be the $L^2$-projector onto $\mathcal{Z}_\prec$, and define $\Cgen_{\prec,\rbar}\in\RR$ for $\prec,\rbar\geq 0$ to be
\begin{equation} \label{eq:Cp}
\Cgen_{\prec,\rbar}:=\|(I-Z_\prec)K^\rbar\|.
\end{equation}
Note that $\Cgen_{\prec,0}=1$.
In the case $\prec=0$ and $\rbar=1$ we further define the constant $\Cupper\in\RR$ to be 
\begin{equation} \label{eq:C}
\Cupper:=\max\{\|(I-Z_0)K\|,\|(I-Z_0)K^*\|\}.
\end{equation}
The following inequality is stated in \cite[Lemma~2.1]{Sande:2019}. For completeness we provide a short proof here as well.
\begin{lemma}\label{lem:1simpl}
The constants in \eqref{eq:Cp} and \eqref{eq:C} satisfy
\begin{equation*}
\Cgen_{\prec,1}\leq \Cupper, \quad \prec\geq 0.  
\end{equation*}
\end{lemma}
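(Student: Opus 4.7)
The plan is to prove the lemma by induction on $\prec$, but to close the induction I will need a stronger claim that simultaneously bounds both $\|(I-Z_\prec)K\|$ and $\|(I-Z_\prec)K^*\|$ by $\Cupper$. The reason is that the natural computation for $\|(I-Z_\prec)K f\|^2$ turns the bound into something involving $(I-Z_{\prec-1})K^*$ at the previous level, so a one-sided inductive hypothesis will not close.

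The base case $\prec=0$ is immediate: by the definition \eqref{eq:C} of $\Cupper$, both $\|(I-Z_0)K\| \leq \Cupper$ and $\|(I-Z_0)K^*\| \leq \Cupper$. For the inductive step, fix $f\in L^2(a,b)$, set $h:=(I-Z_\prec)Kf$, and use that $Z_\prec$ is an orthogonal projection so that
\begin{equation*}
\|h\|^2 = ((I-Z_\prec)Kf, h) = (Kf, h) = (f, K^*h).
\end{equation*}
Now I split $K^*h = Z_{\prec-1}K^*h + (I-Z_{\prec-1})K^*h$ and compute
\begin{equation*}
(f, Z_{\prec-1}K^*h) = (Z_{\prec-1}f, K^*h) = (KZ_{\prec-1}f, h).
\end{equation*}
The crucial observation is that $KZ_{\prec-1}f \in K(\mathcal{Z}_{\prec-1}) \subseteq \mathcal{Z}_\prec$, while $h \perp \mathcal{Z}_\prec$, so this term vanishes. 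Hence $\|h\|^2 = (f, (I-Z_{\prec-1})K^*h)$ and Cauchy--Schwarz together with the inductive hypothesis $\|(I-Z_{\prec-1})K^*\| \leq \Cupper$ give $\|h\|^2 \leq \|f\|\,\Cupper\,\|h\|$, i.e.\ $\|(I-Z_\prec)K\| \leq \Cupper$. The symmetric bound $\|(I-Z_\prec)K^*\| \leq \Cupper$ follows from exactly the same argument with $K$ and $K^*$ interchanged, using this time the other inclusion $K^*(\mathcal{Z}_{\prec-1}) \subseteq \mathcal{Z}_\prec$ provided by \eqref{eq:Xsimpl}. This completes the induction.

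The only non-routine step, and the only place where the symmetry assumption \eqref{eq:Xsimpl} is really used, is the vanishing of $(KZ_{\prec-1}f, h)$ (and its dual counterpart): it is exactly the symmetric form of \eqref{eq:Xsimpl} that lets us convert a factor of $K^*$ on $h$ into a factor of $K$ on $Z_{\prec-1}f$ and then absorb it into $\mathcal{Z}_\prec$. Recognizing that the induction must be carried out simultaneously for $K$ and $K^*$ is the main conceptual point; once that is in place the estimate reduces to one application of Cauchy--Schwarz per step.
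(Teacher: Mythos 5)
Your proof is correct and is essentially the paper's argument in expanded form: your inner-product computation showing $(KZ_{\prec-1}f,h)=0$ is just an unpacked version of the paper's one-line appeal to the best-approximation property of $Z_\prec$ (namely $\|(I-Z_\prec)Kf\|\leq\|K(I-Z_{\prec-1})f\|$, followed by taking adjoints), and both arguments reduce $\|(I-Z_\prec)K\|$ to $\|(I-Z_{\prec-1})K^*\|$ via the same inclusion from \eqref{eq:Xsimpl}. The paper unrolls this recursion down to level $0$ with a parity case split instead of your simultaneous two-sided induction, but the content is identical.
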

\begin{proof}
For $\prec=0$, this is true by the definitions of $\Cgen_{0,1}$ and $\Cupper$.
For $\prec\geq 1$, we see from \eqref{eq:Xsimpl} that $KZ_{\prec-1}$ maps into the space $\mathcal{Z}_\prec$. Now, since $Z_\prec$ is the best approximation into $\mathcal{Z}_\prec$ we have
\begin{equation*}
\|(I-Z_\prec)K\|\leq \|K(I-Z_{\prec-1})\|=\|(I-Z_{\prec-1})K^*\|.
\end{equation*}
Continuing this procedure gives
\begin{align*}
\|(I-Z_\prec)K\|\leq\begin{cases}
  \|(I-Z_0)K\|, & \prec \text{ even},\\
   \|(I-Z_0)K^*\|, & \prec \text{ odd},
\end{cases}
\end{align*}
and the result again follows from the definitions of $\Cgen_{\prec,1}$ and $\Cupper$.
\end{proof}

Inspired by the idea of \cite[Lemma~1]{Floater:2018} we have the following more general result.
\begin{lemma}\label{lem:2simpl}
The constants in \eqref{eq:Cp} satisfy
\begin{equation*}
\Cgen_{\prec,\rbar}\leq \Cgen_{\prec,s}\Cgen_{\prec-s,\rbar-s},
\end{equation*}
for all $0\leq s\leq\prec,\rbar$.
\end{lemma}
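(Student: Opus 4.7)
The plan is to exploit the recursive definition $\mathcal{Z}_\prec = \mathcal{P}_0 + K(\mathcal{Z}_{\prec-1})$ to show that $K^s$ maps $\mathcal{Z}_{\prec-s}$ into $\mathcal{Z}_\prec$, and then to factor $K^\rbar = K^s \cdot K^{\rbar-s}$ in a way that introduces both idempotents $(I-Z_\prec)$ and $(I-Z_{\prec-s})$.

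First I would establish the inclusion $K^s(\mathcal{Z}_{\prec-s}) \subseteq \mathcal{Z}_\prec$. From the definition we immediately get $K(\mathcal{Z}_{j-1}) \subseteq \mathcal{Z}_j$ for every $j \geq 1$; a straightforward induction on $j$ then yields $K^j(\mathcal{Z}_{\prec-s}) \subseteq \mathcal{Z}_{\prec-s+j}$, and taking $j=s$ gives the desired inclusion. (Note that $\prec-s \geq 0$ and $\rbar-s \geq 0$ by hypothesis, so all subscripts are meaningful.)

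Next, for any $f \in L^2(a,b)$, write
\begin{equation*}
K^\rbar f = K^s\bigl(K^{\rbar-s}f\bigr) = K^s Z_{\prec-s} K^{\rbar-s} f + K^s (I-Z_{\prec-s}) K^{\rbar-s} f.
\end{equation*}
Since $Z_{\prec-s}K^{\rbar-s}f \in \mathcal{Z}_{\prec-s}$, the first term lies in $K^s(\mathcal{Z}_{\prec-s}) \subseteq \mathcal{Z}_\prec$, and is therefore annihilated by $(I-Z_\prec)$. Hence
\begin{equation*}
(I-Z_\prec)K^\rbar f = (I-Z_\prec) K^s (I-Z_{\prec-s}) K^{\rbar-s} f.
\end{equation*}

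Taking norms and using the operator-norm submultiplicativity gives
\begin{equation*}
\|(I-Z_\prec)K^\rbar f\| \leq \|(I-Z_\prec)K^s\|\,\|(I-Z_{\prec-s})K^{\rbar-s}f\| \leq \Cgen_{\prec,s}\,\Cgen_{\prec-s,\rbar-s}\,\|f\|,
\end{equation*}
and taking the supremum over $f$ with $\|f\|=1$ yields the claim. The only nonroutine step is the inclusion $K^s(\mathcal{Z}_{\prec-s}) \subseteq \mathcal{Z}_\prec$; everything else is a clean operator-norm manipulation. Interestingly, the hypothesis \eqref{eq:Xsimpl} involving $K^*$ is not needed here — only the forward recursion $\mathcal{Z}_\prec \supseteq K(\mathcal{Z}_{\prec-1})$ is used, in contrast with Lemma~\ref{lem:1simpl}.
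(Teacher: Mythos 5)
Your proof is correct and follows essentially the same route as the paper: the key identity $(I-Z_\prec)K^\rbar = (I-Z_\prec)K^s(I-Z_{\prec-s})K^{\rbar-s}$, obtained by inserting $Z_{\prec-s}$ and killing the term $K^sZ_{\prec-s}K^{\rbar-s}f\in\mathcal{Z}_\prec$, followed by submultiplicativity of the operator norm, is exactly the paper's argument. You merely make explicit the inclusion $K^s(\mathcal{Z}_{\prec-s})\subseteq\mathcal{Z}_\prec$ (which the paper leaves implicit), and your observation that only the forward recursion, not the $K^*$ identity in \eqref{eq:Xsimpl}, is needed here is accurate.
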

\begin{proof}
Observe that the operator $(I-Z_\prec)K^sZ_{\prec-s}K^{\rbar-s}=0$ since 
$K^sZ_{\prec-s}K^{\rbar-s}f\in \mathcal{Z}_\prec$ for any $f\in L^2(a,b)$. Thus,
\begin{equation*}
\|(I-Z_\prec)K^{\rbar}\| =\|(I-Z_\prec)K^s(I-Z_{\prec-s})K^{\rbar-s}\| \leq \|(I-Z_\prec)K^s\|\,\|(I-Z_{\prec-s})K^{\rbar-s}\|,
\end{equation*}
and the result follows from the definition of $\Cgen_{\prec,\rbar}$. 
\end{proof}

Similar to \cite[Theorem~2.1]{Sande:2019} we obtain the following estimate.
\begin{lemma}\label{lem:simple}
The constants in \eqref{eq:Cp} and \eqref{eq:C} satisfy
\begin{equation*}
\Cgen_{\prec,\rbar}\leq \Cgen_{\prec,1} \Cgen_{\prec-1,1}\cdots \Cgen_{\prec-r+1,1}\leq \Cupper^\rbar,
\end{equation*}
for all $\prec\geq r-1$.
\end{lemma}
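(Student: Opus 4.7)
\medskip

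\noindent\textbf{Proof proposal.} The plan is to derive the stated bound by iterating Lemma~\ref{lem:2simpl} with the splitting parameter $s=1$ and then invoking Lemma~\ref{lem:1simpl} to replace each single-step factor by $\Cupper$.

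First, I would apply Lemma~\ref{lem:2simpl} with $s=1$ to obtain
\begin{equation*}
\Cgen_{\prec,\rbar}\leq \Cgen_{\prec,1}\,\Cgen_{\prec-1,\rbar-1},
\end{equation*}
which is legal whenever $\prec\geq 1$ and $\rbar\geq 1$. Next, I would repeat the same splitting on the remaining factor $\Cgen_{\prec-1,\rbar-1}$, and continue inductively. After $\rbar-1$ further applications the second factor reduces to $\Cgen_{\prec-\rbar+1,1}$, so that
\begin{equation*}
\Cgen_{\prec,\rbar}\leq \Cgen_{\prec,1}\,\Cgen_{\prec-1,1}\cdots\Cgen_{\prec-\rbar+1,1}.
\end{equation*}
The hypothesis $\prec\geq \rbar-1$ is exactly what is needed for every intermediate subscript $\prec-j$ (with $0\leq j\leq \rbar-1$) to be nonnegative, so that all the $\Cgen_{\prec-j,\cdot}$ are well defined by \eqref{eq:Cp}. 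A clean way to present this is a short induction on $\rbar$: the base case $\rbar=1$ is trivial, and the inductive step is exactly the $s=1$ application of Lemma~\ref{lem:2simpl} followed by the induction hypothesis applied to $\Cgen_{\prec-1,\rbar-1}$ (which requires $\prec-1\geq \rbar-2$, consistent with our assumption).

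For the second inequality, Lemma~\ref{lem:1simpl} gives $\Cgen_{\prec-j,1}\leq \Cupper$ for every $j=0,\ldots,\rbar-1$, and multiplying these $\rbar$ bounds yields $\Cupper^{\rbar}$.

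I do not expect any real obstacle: the only subtlety is bookkeeping on the index range to make sure the recursion terminates at a legitimate constant $\Cgen_{\prec-\rbar+1,1}$ (hence the hypothesis $\prec\geq \rbar-1$), and that every intermediate invocation of Lemma~\ref{lem:2simpl} respects its own constraint $0\leq s\leq \prec,\rbar$.
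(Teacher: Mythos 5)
Your proposal is correct and follows essentially the same route as the paper: the base case $\rbar=1$ from Lemma~\ref{lem:1simpl}, induction on $\rbar$ via Lemma~\ref{lem:2simpl} with $s=1$ for the first inequality, and Lemma~\ref{lem:1simpl} again to bound each factor by $\Cupper$. Your additional bookkeeping on the index ranges is a welcome clarification but does not change the argument.
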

\begin{proof}
The case $\rbar=1$ is contained in Lemma~\ref{lem:1simpl}. For the first inequality, the cases $r\geq 2$ follow from Lemma~\ref{lem:2simpl} (with $s=1$) and induction on $\rbar$. 
The second inequality then follows from Lemma~\ref{lem:1simpl}.
\end{proof}

In the next subsection we consider a particularly relevant integral operator: the Volterra operator.

\subsection{Error estimates for the Ritz projection}
Let $K$ be the integral operator defined by integrating from the left,
\begin{equation}\label{eq:Kint}
(Kf)(x):=\int_a^xf(y)dy.
\end{equation}
One can check that $K^*$ is integration from the right,
\begin{equation*}
(K^*f)(x)=\int_x^bf(y)dy;
\end{equation*}
see, e.g., \cite[Section~7]{Floater:2018}. 
Note that in this case we have $\|(I-Z_0)K\|=\|(I-Z_0)K^*\|$, and so $\Cupper=\Cgen_{0,1}$.
Moreover, the space $H^r(a,b)$ can be described as
\begin{equation}\label{eq:Hr}
H^r(a,b)=\mathcal{P}_{0} + K(H^{r-1}(a,b))=\mathcal{P}_{0} + K^*(H^{r-1}(a,b))
=\mathcal{P}_{r-1}+K^r(H^0(a,b)),
\end{equation}
with $H^0(a,b)=L^2(a,b)$ and $\mathcal{P}_{-1}=\{0\}$.
Thus, any $u\in H^r(a,b)$ is of the form $u=g+K^rf$ for $g\in \mathcal{P}_{r-1}$ and $f\in L^2(a,b)$. This leads to the following error estimate for the $L^2$-projection.

\begin{theorem}\label{thm:L2}
Let $Z_\prec$ be the $L^2$-projector onto $\mathcal{Z}_\prec$ and assume $\mathcal{P}_{r-1}\subseteq\mathcal{Z}_\prec$. Then, for any $u\in H^r(a,b)$ we have
\begin{equation}\label{ineq:L2}
\|u-Z_\prec u\|\leq  \Cgen_{\prec,r}\|\partial^ru\|.
\end{equation}
\end{theorem}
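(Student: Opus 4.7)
The plan is direct: combine the decomposition \eqref{eq:Hr} of $H^r(a,b)$ with the hypothesis $\mathcal{P}_{r-1} \subseteq \mathcal{Z}_\prec$ and the definition of $\Cgen_{\prec,r}$. Specifically, I would proceed in three short steps.

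First, I would invoke the equality $H^r(a,b) = \mathcal{P}_{r-1} + K^r(L^2(a,b))$ from \eqref{eq:Hr} to write any $u \in H^r(a,b)$ as
\begin{equation*}
u = g + K^r f, \qquad g \in \mathcal{P}_{r-1}, \quad f \in L^2(a,b).
\end{equation*}
Since $\partial$ is a left inverse of $K$ (by the fundamental theorem of calculus applied to $Kf(x) = \int_a^x f(y)\,dy$), iterating gives $\partial^r K^r f = f$, and $\partial^r g = 0$ because $\deg g \leq r-1$. Hence $f = \partial^r u$.

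Second, the assumption $\mathcal{P}_{r-1} \subseteq \mathcal{Z}_\prec$ ensures that the $L^2$-projector fixes $g$, so $Z_\prec g = g$. Consequently,
\begin{equation*}
u - Z_\prec u = (I - Z_\prec)(g + K^r f) = (I - Z_\prec) K^r f.
\end{equation*}

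Third, I would apply the definition $\Cgen_{\prec,r} = \|(I - Z_\prec)K^r\|$ from \eqref{eq:Cp}, yielding
\begin{equation*}
\|u - Z_\prec u\| = \|(I - Z_\prec) K^r f\| \leq \Cgen_{\prec,r}\,\|f\| = \Cgen_{\prec,r}\,\|\partial^r u\|,
\end{equation*}
which is exactly \eqref{ineq:L2}.

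There is no genuine obstacle here; the proof is a one-line reduction once the decomposition \eqref{eq:Hr} and the operator-norm definition of $\Cgen_{\prec,r}$ are in place. The only minor point worth being careful about is that the decomposition $u = g + K^r f$ need not be unique (the sum in \eqref{eq:Hr} is not asserted to be direct), but this is harmless: any choice yields $f = \partial^r u$ by the argument above, so the final bound is independent of the representation.
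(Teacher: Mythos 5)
Your proof is correct and follows essentially the same route as the paper: decompose $u=g+K^rf$ via \eqref{eq:Hr}, use $\mathcal{P}_{r-1}\subseteq\mathcal{Z}_\prec$ to kill the polynomial part under $I-Z_\prec$, and bound $\|(I-Z_\prec)K^rf\|$ by the operator-norm definition of $\Cgen_{\prec,r}$ together with $f=\partial^ru$. Your extra remarks on why $f=\partial^ru$ and on the non-uniqueness of the decomposition are sound and merely make explicit what the paper leaves implicit.
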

\begin{proof}
Since $\mathcal{P}_{r-1}\subseteq\mathcal{Z}_\prec$ and using \eqref{eq:Hr}, we have $u=g+K^rf$ for $g\in \mathcal{P}_{r-1}$ and $f\in L^2(a,b)$. Thus,
\begin{equation}\label{ineq:Hr}
\|u-Z_\prec u\|=\|g+K^rf-Z_\prec(g+K^rf)\|= \|(I-Z_\prec) K^rf\| \leq  \Cgen_{\prec,r}\|f\|,
\end{equation}
and the result follows from the identity $\partial^ru=f$.
\end{proof}
\begin{remark}\label{rem:Andrea}
By definition of the operator norm, the constant $\Cgen_{\prec,r}$ is the smallest possible constant such that the last inequality in \eqref{ineq:Hr} holds for all $f\in L^2(a,b)$. We thus see from the above proof that whenever $\mathcal{P}_{r-1}\subseteq\mathcal{Z}_\prec$, the constant $\Cgen_{\prec,r}$ is the smallest possible constant such that \eqref{ineq:L2} holds for all $u\in  H^r(a,b)$.
\end{remark}

\begin{example}\label{ex:L2simple}
From the definition of $\mathcal{Z}_\prec$ in \eqref{eq:Xsimpl}, with $K$ as in \eqref{eq:Kint}, it follows that $\mathcal{P}_{r-1}$ is a subspace of $\mathcal{Z}_\prec$ for any $\prec\geq r-1$.
Hence, Theorem~\ref{thm:L2} and Lemma~\ref{lem:simple} imply that for any $u\in H^r(a,b)$ we have
\begin{equation*} 
\|u-Z_\prec u\|\leq  \Cgen_{\prec,r}\|\partial^ru\|\leq\Cupper^r \|\partial^ru\|,
\end{equation*}
for all $\prec\geq r-1$.
However, as we shall see in the next section, there are important cases where  $\mathcal{P}_{r-1}\subseteq\mathcal{Z}_\prec$ for some $\prec<r-1$ (e.g., if $\mathcal{P}_{k}\subseteq\mathcal{Z}_0$ with $k\geq 1$). Such cases will be considered in our proof of the error estimate in \eqref{eq:low-order-intro} and the sharper estimates in Section \ref{sec:low}.
\end{example}

We now focus on a different projector which is very natural in the context of Galerkin methods.
For any $q=0,\ldots,\prec$ we define the projector $R_\prec^q: H^q(a,b)\to \mathcal{Z}_\prec$ by
\begin{equation}\label{def:Ritz}
\begin{aligned}
(\partial^{q}R_\prec^q u,\partial^qv) &= (\partial^q u, \partial^q v), \quad &&\forall v\in \mathcal{Z}_\prec,
\\
(R_\prec^qu,g)&=(u,g), &&\forall g\in \mathcal{P}_{q-1}.
\end{aligned}
\end{equation}
We remark that $R_\prec^q$ is the Ritz projector for the $q$-harmonic problem.
Observe that this projector satisfies $\partial^q R_\prec^q=Z_{\prec-q}\partial^q$, where $Z_{\prec-q}$ denotes the $L^2$-projector onto $\mathcal{Z}_{\prec-q}$. 
With the aid of the Aubin--Nitsche duality argument we arrive at the following estimate.
\begin{lemma}\label{lem:RitzL2}
Let $u\in H^q(a,b)$ be given, and let $R_\prec^q$ be the projector onto $\mathcal{Z}_\prec$ defined in \eqref{def:Ritz}. Then, for any $\ell=0,\ldots,q$ we have
\begin{equation*} 
\|\partial^{\ell}(u-R^q_\prec u)\| \leq  \Cgen_{\prec-q,q-\ell}\|\partial^qu-Z_{\prec-q}\partial^qu\|,
\end{equation*}
for all $\prec\geq q$ such that $\mathcal{P}_{q-\ell-1}\subseteq\mathcal{Z}_{\prec-q}$.
\end{lemma}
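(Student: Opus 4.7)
The argument splits into two cases. When $\ell=q$, both sides coincide directly: the identity $\partial^q R_\prec^q=Z_{\prec-q}\partial^q$ noted immediately before the statement gives $\partial^q(u-R_\prec^q u)=\partial^q u-Z_{\prec-q}\partial^q u$, and since $\Cgen_{\prec-q,0}=1$ the inequality reduces to an equality. The interesting range is $0\le\ell<q$, where the plan is to run the standard Aubin--Nitsche duality argument.

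Set $e:=u-R_\prec^q u$ and $F:=\partial^q e=(I-Z_{\prec-q})\partial^q u$. The first condition in \eqref{def:Ritz} yields the Galerkin-type orthogonality $(F,\partial^q v)=0$ for all $v\in\mathcal{Z}_\prec$; equivalently, $F\perp\mathcal{Z}_{\prec-q}$ since $\partial^q\mathcal{Z}_\prec=\mathcal{Z}_{\prec-q}$. I then introduce the dual problem: seek $w\in H^q$, normalized by $(w,g)=0$ for $g\in\mathcal{P}_{q-1}$, satisfying
\[
(\partial^q w,\partial^q v)=(\partial^\ell v,\partial^\ell e),\qquad\forall v\in H^q.
\]
Testing with $v=e$ gives $(\partial^q w,F)=\|\partial^\ell e\|^2$. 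By solving the strong form of this dual problem through repeated integration from the left endpoint~$a$ against the natural boundary conditions, one obtains the explicit identification $\partial^q w=\pm K^{q-\ell}\partial^\ell e$, where $K$ is the Volterra operator of \eqref{eq:Kint}. Galerkin orthogonality combined with the definition of $\Cgen_{\prec-q,q-\ell}$ in \eqref{eq:Cp} then closes the loop:
\[
\|\partial^\ell e\|^2=(F,\partial^q w)=\bigl(F,(I-Z_{\prec-q})\partial^q w\bigr)\le\|F\|\,\bigl\|(I-Z_{\prec-q})K^{q-\ell}\partial^\ell e\bigr\|\le\|F\|\,\Cgen_{\prec-q,q-\ell}\,\|\partial^\ell e\|,
\]
and division by $\|\partial^\ell e\|$ yields the stated bound.

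The main obstacle is the compatibility of the dual problem: inserting $v\in\mathcal{P}_{q-1}$ forces $\partial^\ell e\perp\mathcal{P}_{q-\ell-1}$, a condition not directly implied by \eqref{def:Ritz}. This is precisely where the hypothesis $\mathcal{P}_{q-\ell-1}\subseteq\mathcal{Z}_{\prec-q}$ enters. It guarantees that any polynomial contribution of degree at most $q-\ell-1$ lies in $\mathcal{Z}_{\prec-q}$, and so is annihilated when paired against $F$; therefore one may legitimately replace $\partial^\ell e$ by $(I-P_{q-\ell-1})\partial^\ell e$ on the right-hand side of the dual without changing the estimate, restoring compatibility. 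Verifying that this substitution still leads to the clean identification $\partial^q w=\pm K^{q-\ell}\partial^\ell e$ (up to the harmless polynomial in $\mathcal{P}_{q-\ell-1}\subseteq\mathcal{Z}_{\prec-q}$ that drops out against $F$) is the principal technical step, but it follows from the Taylor-remainder identity $K^{q-\ell}\partial^{q-\ell}=I-T^a_{q-\ell-1}$ applied inside the duality pairing.
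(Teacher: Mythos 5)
Your strategy is the same Aubin--Nitsche duality the paper uses, and the case $\ell=q$ is fine, but for $1\le\ell\le q-1$ there is a genuine gap at exactly the point you flag as the ``principal technical step.'' The compatibility condition $\partial^\ell e\perp\mathcal{P}_{q-\ell-1}$ really does fail in general: the only moment information supplied by \eqref{def:Ritz} is $e\perp\mathcal{P}_{q-1}$, and differentiating destroys it through boundary terms. Concretely, take $\mathcal{Z}_0=\mathcal{P}_0$ (so $\mathcal{Z}_\prec=\mathcal{P}_\prec$, a legitimate instance of \eqref{eq:Xsimpl}), $(a,b)=(0,1)$, $\prec=q=2$, $\ell=1$, $u=x^3$; then $e=u-R^2_2u=x^3-\tfrac32x^2+\tfrac35x-\tfrac1{20}$, and $\int_0^1\partial e=e(1)-e(0)=\tfrac1{10}\ne0$, so $\partial e\not\perp\mathcal{P}_0$. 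Your proposed repair does not close the gap: once the dual datum $\partial^\ell e$ is replaced by $(I-P_{q-\ell-1})\partial^\ell e$, testing the dual problem with $v=e$ produces $(\partial^\ell e,(I-P_{q-\ell-1})\partial^\ell e)=\|(I-P_{q-\ell-1})\partial^\ell e\|^2$ on the right-hand side, not $\|\partial^\ell e\|^2$. The orthogonality $F\perp\mathcal{P}_{q-\ell-1}$ (from $\mathcal{P}_{q-\ell-1}\subseteq\mathcal{Z}_{\prec-q}$ and Galerkin orthogonality) lets you drop the polynomial from the pairing $(F,\cdot)$, but it cannot restore the discarded component $P_{q-\ell-1}\partial^\ell e$ to the norm you are trying to bound. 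What your argument actually proves is the weaker estimate $\|(I-P_{q-\ell-1})\partial^\ell e\|\le\Cgen_{\prec-q,q-\ell}\|\partial^qu-Z_{\prec-q}\partial^qu\|$; in the example above $\|P_0\partial e\|=\tfrac1{10}>0$, so the two statements genuinely differ.

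For comparison, the paper poses the dual problem with datum $e$ itself, namely $(-1)^{q-\ell}\partial^{2(q-\ell)}w=e$ with Neumann conditions $w^{(q-\ell)}=\cdots=w^{(2(q-\ell)-1)}=0$ at both endpoints; the solvability of that problem requires only $e\perp\mathcal{P}_{q-\ell-1}$, which follows from the second condition in \eqref{def:Ritz}. In the paper the hypothesis $\mathcal{P}_{q-\ell-1}\subseteq\mathcal{Z}_{\prec-q}$ is used not for compatibility of the dual problem but to apply Theorem~\ref{thm:L2} to $\partial^qw\in H^{q-\ell}(a,b)$, yielding $\|\partial^qw-Z_{\prec-q}\partial^qw\|\le\Cgen_{\prec-q,q-\ell}\|\partial^{2q-\ell}w\|$ with $\|\partial^{2q-\ell}w\|=\|\partial^\ell e\|$. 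To salvage your version you would need either to switch to the datum $e$ as the paper does, or to show separately that $\|P_{q-\ell-1}\partial^\ell e\|$ is controlled by $\Cgen_{\prec-q,q-\ell}\|\partial^qu-Z_{\prec-q}\partial^qu\|$; neither step is in your write-up.
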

\begin{proof}
Let $u\in H^q(a,b)$ be given and define $w$ as the solution to the Neumann problem
\begin{equation*} 
\begin{aligned}
(-1)^{q-\ell}\partial^{2(q-\ell)}w&=u-R_\prec^qu,
\\
w^{(q-\ell)}(a)&=w^{(q-\ell)}(b)=\cdots=w^{(2(q-\ell)-1)}(a)=w^{(2(q-\ell)-1)}(b)=0.
\end{aligned}
\end{equation*}
Using integration by parts, $q-\ell$ times, we have
\begin{align*}
\|\partial^{\ell}(u-R^q_\prec u)\|^2&=(\partial^{\ell}(u-R^q_\prec u),\partial^{\ell}(u-R^q_\prec u))=(\partial^{\ell}(u-R^q_\prec u),(-1)^{q-\ell}\partial^{\ell}\partial^{2(q-\ell)}w)
\\
&=(\partial^q(u-R^q_\prec u),\partial^{q} w)=(\partial^q(u-R^q_\prec u),\partial^{q} (w-v)),
\end{align*}
for any $v\in\mathcal{Z}_\prec$, since $(\partial^q(u-R^q_\prec u),\partial^qv)=0$. Using $\|\partial^{\ell}(u-R^q_\prec u)\|=\|\partial^{2q-\ell}w\|$ and the Cauchy--Schwarz inequality, we obtain
\begin{equation*}
\|\partial^{\ell}(u-R^q_\prec u)\|\,\|\partial^{2q-\ell}w\|\leq \|\partial^q(u-R^q_\prec u)\|\,\|\partial^q (w-v)\|.
\end{equation*}
If we let $v=R_\prec^qw$, then Theorem~\ref{thm:L2} implies that
\begin{equation*}
\|\partial^q(w-R^q_\prec w)\|=\|\partial^qw-Z_{\prec -q}\partial^qw\|
\leq \Cgen_{\prec-q,q-\ell} \|\partial^{2q-\ell}w\|,
\end{equation*}
since $\mathcal{P}_{q-\ell-1}\subseteq\mathcal{Z}_{\prec-q}$. Thus,
\begin{equation*}
\|\partial^{\ell}(u-R^q_\prec u)\|
\leq \Cgen_{\prec-q,q-\ell} \|\partial^q(u-R^q_\prec u)\|
=\Cgen_{\prec-q,q-\ell}\|\partial^qu-Z_{\prec-q}\partial^qu\|,
\end{equation*}
which completes the proof.
\end{proof}

Theorem~\ref{thm:L2} in combination with Lemma~\ref{lem:RitzL2} results in a more classical error estimate for the Ritz projection.
\begin{theorem}\label{thm:Ritz}
Let $u\in H^r(a,b)$ be given.
For any $q=0,\ldots,r$, let $R_\prec^q$ be the projector onto $\mathcal{Z}_\prec$ defined in \eqref{def:Ritz}. Then, for any $\ell=0,\ldots,q$ we have
\begin{equation*}
\|\partial^{\ell}(u-R^q_\prec u)\|\leq 
\Cgen_{\prec-q,q-\ell} \Cgen_{\prec-q,r-q}\|\partial^ru\|,
\end{equation*}
for all $\prec\geq q$ such that $\mathcal{P}_{r-q-1}\subseteq\mathcal{Z}_{\prec-q}$ and $\mathcal{P}_{q-\ell-1}\subseteq\mathcal{Z}_{\prec-q}$.
\end{theorem}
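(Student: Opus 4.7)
The plan is to chain the two already-established ingredients: Lemma~\ref{lem:RitzL2} controls the error of $R_\prec^q$ in the $\ell$th derivative norm in terms of the $L^2$-error of $Z_{\prec-q}$ applied to $\partial^q u$, and Theorem~\ref{thm:L2} converts such an $L^2$-projection error into a bound by a higher derivative. So the whole argument is essentially an application of these two results back-to-back, with careful bookkeeping of which polynomial-inclusion hypothesis is needed at each step.

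First I would invoke Lemma~\ref{lem:RitzL2} with the given $q$ and $\ell$. Since $\prec \geq q$ and $\mathcal{P}_{q-\ell-1} \subseteq \mathcal{Z}_{\prec-q}$ by hypothesis, the lemma applies and yields
\begin{equation*}
\|\partial^\ell(u - R_\prec^q u)\| \leq \Cgen_{\prec-q,\,q-\ell}\,\|\partial^q u - Z_{\prec-q}\partial^q u\|.
\end{equation*}
Next, because $u\in H^r(a,b)$ we have $\partial^q u \in H^{r-q}(a,b)$, so I would apply Theorem~\ref{thm:L2} to the function $\partial^q u$ with the projector $Z_{\prec-q}$ onto $\mathcal{Z}_{\prec-q}$ and with smoothness parameter $r-q$. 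The hypothesis $\mathcal{P}_{r-q-1} \subseteq \mathcal{Z}_{\prec-q}$ is precisely what Theorem~\ref{thm:L2} requires, and it gives
\begin{equation*}
\|\partial^q u - Z_{\prec-q}\partial^q u\| \leq \Cgen_{\prec-q,\,r-q}\,\|\partial^{r-q}(\partial^q u)\| = \Cgen_{\prec-q,\,r-q}\,\|\partial^r u\|.
\end{equation*}
Substituting this into the previous bound delivers the claimed estimate.

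There is no real obstacle here: the only things to be attentive to are (i) identifying $\partial^q u$ as the right input for Theorem~\ref{thm:L2}, (ii) checking that the two polynomial-inclusion hypotheses stated in the theorem are exactly those required by Lemma~\ref{lem:RitzL2} and by Theorem~\ref{thm:L2} respectively, and (iii) noting that the degenerate cases $q=\ell$ or $q=r$ are harmless because $\Cgen_{\prec-q,0}=1$ by definition, so the corresponding factor collapses and one recovers either Lemma~\ref{lem:RitzL2} or Theorem~\ref{thm:L2} directly.
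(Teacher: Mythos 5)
Your proof is correct and is exactly the argument the paper intends: the paper states Theorem~\ref{thm:Ritz} as an immediate consequence of combining Lemma~\ref{lem:RitzL2} (applied with the hypothesis $\mathcal{P}_{q-\ell-1}\subseteq\mathcal{Z}_{\prec-q}$) with Theorem~\ref{thm:L2} applied to $\partial^q u\in H^{r-q}(a,b)$ (using $\mathcal{P}_{r-q-1}\subseteq\mathcal{Z}_{\prec-q}$), without even writing out the details. Your bookkeeping of the two polynomial-inclusion hypotheses and the remark that $\Cgen_{\prec-q,0}=1$ handles the degenerate cases are both accurate.
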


\begin{example}
Similar to Example \ref{ex:L2simple}, we observe from the definition of $\mathcal{Z}_{\prec-q}$ in \eqref{eq:Xsimpl}, with $K$ as in \eqref{eq:Kint}, that $\mathcal{P}_{r-q-1}$ and $\mathcal{P}_{q-\ell-1}$ are subspaces of $\mathcal{Z}_{\prec-q}$ for any $\prec$ satisfying $\prec\geq r-1$ and $\prec\geq 2q-\ell-1$, respectively.
Then, Lemma~\ref{lem:RitzL2} and Theorem~\ref{thm:Ritz}, together with Lemma~\ref{lem:simple}, imply the following results for $u\in H^q(a,b)$. For any $\ell=0,\ldots,q$ we have
\begin{equation} \label{ineq:RitzL2simple}
\|\partial^{\ell}(u-R^q_\prec u)\| \leq  \Cgen_{\prec-q,q-\ell}\|\partial^qu-Z_{\prec-q}\partial^qu\|
\leq \Cupper^{q-\ell}\|\partial^qu-Z_{\prec-q}\partial^qu\|,
\end{equation}
for all $\prec\geq \max\{q,2q-\ell-1\}$, and
\begin{equation}\label{ineq:Ritzsimple}
\|\partial^{\ell}(u-R^q_\prec u)\|\leq 
 \Cgen_{\prec-q,q-\ell} \Cgen_{\prec-q,r-q}\|\partial^ru\|
\leq \Cupper^{r-\ell}\|\partial^ru\|,
\end{equation}
for all $\prec\geq \max\{q,r-1,2q-\ell-1\}$.
\end{example}

\begin{example}\label{ex:stab-lower}
Let $q=1$. Then, for any $u\in H^1(a,b)$ and $\prec \geq 1$ we have the error estimates
\begin{align*}
\|u-R^1_\prec u\|&\leq \Cgen_{\prec-1,1}\|\partial u-Z_{\prec -1}\partial u\|\leq \Cgen_{\prec-1,1}\|\partial u\|, 
\\
\|\partial(u-R^1_\prec u)\|&\leq \|\partial u-Z_{\prec -1}\partial u\|\leq\|\partial u\|,
\end{align*}
and the stability estimates
\begin{align}
\|\partial R^1_\prec u\|&= \|Z_{\prec -1}\partial u\|\leq  \|\partial u\|, \label{ineq:stab:a}
\\
\|R^1_\prec u\|&\leq \|u\|+ \|R^1_\prec u-u\|\leq \|u\| + \Cgen_{\prec -1,1}\|\partial u\|. \label{ineq:stab:b}
\end{align}
\end{example}

We end this section with an observation that will be relevant in the case of a multi-patch geometry; see Section~\ref{sec:multipatch}.
\begin{lemma}\label{lem:Stefan}
If $\mathcal{P}_2\subseteq\mathcal{Z}_\prec$ then $R^1_\prec u(a)=u(a)$ and $R^1_\prec u(b)=u(b)$.
\end{lemma}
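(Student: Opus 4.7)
The plan is to work with the error $e := u - R^1_\prec u$ and exploit both orthogonality conditions in the definition of the Ritz projector, using that $\mathcal{P}_2 \subseteq \mathcal{Z}_\prec$ provides us with quadratic test functions. From \eqref{def:Ritz} with $q=1$, the error satisfies $(\partial e, \partial v) = 0$ for all $v \in \mathcal{Z}_\prec$ and $(e, g) = 0$ for all $g \in \mathcal{P}_0$; in particular $\int_a^b e(x)\, dx = 0$.

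Next, I would pick the specific quadratic test functions $v_1(x) := (x-a)^2$ and $v_2(x) := (x-b)^2$, both of which lie in $\mathcal{P}_2 \subseteq \mathcal{Z}_\prec$ by assumption. Plugging $v_1$ into the first orthogonality relation gives $\int_a^b \partial e(x)\cdot 2(x-a)\, dx = 0$, and integration by parts turns the left-hand side into $2(b-a)\, e(b) - 2\int_a^b e(x)\, dx$. The integral vanishes by the second orthogonality relation, so $e(b)=0$. A symmetric computation with $v_2$ yields $2(b-a)\, e(a) - 2\int_a^b e(x)\, dx = 0$ and therefore $e(a)=0$. This is the whole argument; the only subtlety is making sure we may apply integration by parts, which is fine since $R^1_\prec u \in \mathcal{Z}_\prec$ is at least continuous on $[a,b]$ (in fact piecewise polynomial) and $u \in H^1(a,b)$ has a continuous representative.

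There is no real obstacle here: the proof is a direct Aubin--Nitsche-style trick, choosing the test function so that integration by parts isolates the boundary value we want. The role of the hypothesis $\mathcal{P}_2 \subseteq \mathcal{Z}_\prec$ is precisely to ensure the availability of the two quadratic test functions needed to cancel the interior integral against a nonzero boundary term at each endpoint; with only linear test functions one obtains $e(a)=e(b)$ but cannot conclude they vanish individually.
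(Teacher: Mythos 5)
Your proof is correct and is essentially identical to the paper's: both use the test functions $(x-a)^2$ and $(b-x)^2$ together with integration by parts and the orthogonality of the error to constants, the only cosmetic difference being that you phrase the computation in terms of the error $e=u-R^1_\prec u$ while the paper writes the two inner products separately. Your closing remark about why $\mathcal{P}_2$ (rather than $\mathcal{P}_1$) is needed is a correct and useful observation, though not part of the paper's argument.
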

\begin{proof}
Let $q=1$ and pick $v(x)=(x-a)^2$ in \eqref{def:Ritz}. Then, using integration by parts, we have
\begin{align*}
(\partial R^1_\prec u,\partial v) &= 2(b-a)R^1_\prec u(b)-(R^1_\prec u,\partial^2 v)=2(b-a)R^1_\prec u(b)-(R^1_\prec u,2),
\\
(\partial u, \partial v) &= 2(b-a)u(b)-(u,\partial^2 v) = 2(b-a)u(b)-(u,2),
\end{align*}
and $R^1_\prec u(b)=u(b)$, since $(R^1_\prec u,2)=(u,2)$. Similarly, by picking $v(x)=(b-x)^2$ we obtain $R^1_\prec u(a)=u(a)$.
\end{proof}

\section{Spline spaces of arbitrary smoothness}\label{sec:low}
In this section we show error estimates, with explicit constants, for spline spaces of arbitrary smoothness defined on arbitrary knot sequences. To do this we make use of a theorem in \cite{Schwab:99} for polynomial approximation.

\begin{lemma}\label{lem:poly}
Let $u\in H^r(a,b)$ be given. For any $p\geq r-1$, let $P_p$ be the $L^2$-projector onto $\mathcal{P}_{p}$. Then,
\begin{equation}\label{ineq:poly}
\|u-P_pu\|\leq \left(\frac{b-a}{2}\right)^r\sqrt{\frac{(p+1-r)!}{(p+1+r)!}}\|\partial^r u\|.
\end{equation}
\end{lemma}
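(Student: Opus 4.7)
The plan is to reduce to a reference interval and invoke a classical polynomial approximation estimate obtained via Legendre expansion, which is the content of the theorem in \cite{Schwab:99} cited by the authors.

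First, I would apply the affine change of variable $x = (a+b)/2 + (b-a)t/2$ carrying $[a,b]$ to $[-1,1]$. The $L^2$-projector commutes with this affine transformation, so the projection error transforms in the natural way: the $L^2$-norm on $[a,b]$ picks up a factor $\sqrt{(b-a)/2}$ compared to the norm on $[-1,1]$, while the $r$-th derivative rescales as $(2/(b-a))^r$. Combining these yields an overall factor $((b-a)/2)^r$ on the right-hand side, which matches the claimed constant. Thus it suffices to prove the estimate on the reference interval $[-1,1]$, where the target inequality reduces to
\begin{equation*}
\|u-P_pu\|_{L^2(-1,1)} \leq \sqrt{\frac{(p+1-r)!}{(p+1+r)!}}\,\|\partial^r u\|_{L^2(-1,1)}.
\end{equation*}

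Second, I would expand $u$ in the orthonormal basis of Legendre polynomials $\tilde{L}_n = \sqrt{n+1/2}\,L_n$ on $[-1,1]$, so that $u = \sum_{n\ge 0}\hat{u}_n \tilde{L}_n$ and, by Parseval,
\begin{equation*}
\|u-P_pu\|^2_{L^2(-1,1)} = \sum_{n=p+1}^\infty |\hat{u}_n|^2 .
\end{equation*}
The key analytic ingredient, which can be extracted from the Sturm--Liouville structure of Legendre polynomials and a repeated integration by parts against the Rodrigues formula (carried out in \cite{Schwab:99}), is the eigenvalue-type identity
\begin{equation*}
\|\partial^r u\|^2_{L^2(-1,1)} \geq \sum_{n=r}^\infty \frac{(n+r)!}{(n-r)!}\,|\hat{u}_n|^2,
\end{equation*}
which is equivalent to the statement that differentiating $r$ times scales the $n$-th Legendre coefficient by a factor $\sqrt{(n+r)!/(n-r)!}$ in an orthogonal Jacobi basis. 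Taking the minimum of $(n-r)!/(n+r)!$ over $n\geq p+1$ gives the value at $n=p+1$, and the desired estimate follows by comparing the two Parseval sums.

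The only delicate point is the derivative identity in the second step; everything else is routine bookkeeping of affine scaling and orthogonal series. Since this identity is the central content of the result proved in \cite{Schwab:99}, I would simply invoke that reference after performing the affine rescaling rather than reproducing the Rodrigues/Jacobi-polynomial computation in detail.
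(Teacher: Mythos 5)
Your proposal is correct and follows essentially the same route as the paper: the paper's proof is a one-line citation of \cite[Theorem~3.11]{Schwab:99} together with the observation that the $L^\infty$-norm of the weight $(1-x^2)^{r/2}$ is bounded by $1$, which is exactly the content you unpack via the affine rescaling, the Legendre--Parseval identity, and the weighted derivative identity before deferring to the same reference. Your statement of the key inequality $\|\partial^r u\|^2 \geq \sum_{n\geq r} \frac{(n+r)!}{(n-r)!}|\hat{u}_n|^2$ correctly absorbs the weight-dropping step, so no gap remains.
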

\begin{proof}
This follows from \cite[Theorem~3.11]{Schwab:99} since the $L^\infty$-norm of the weight-function is bounded by $1$. 
\end{proof}

\begin{lemma}\label{lem:discont}
Let $u\in H^r(a,b)$ be given. For any $p\geq r-1$ and knot sequence $\knots$, let $S_p^{-1}$ be the $L^2$-projector onto $\mathcal{S}^{-1}_{p,\knots}$. Then,
\begin{equation*}
\|u-S^{-1}_pu\|\leq \left(\frac{h}{2}\right)^r\sqrt{\frac{(p+1-r)!}{(p+1+r)!}}\|\partial^r u\|.
\end{equation*}
\end{lemma}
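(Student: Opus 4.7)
The plan is to exploit the fact that $\mathcal{S}^{-1}_{p,\knots}$ imposes no continuity across knots, so the $L^2$-projector $S_p^{-1}$ decouples across the intervals $I_j = [\xi_j,\xi_{j+1})$. Concretely, on each subinterval $I_j$ the restriction of $S_p^{-1}u$ is precisely the $L^2(I_j)$-projection of $u|_{I_j}$ onto $\calP_p$, because such a piecewise choice minimizes $\|u-s\|^2 = \sum_j \|u-s\|^2_{L^2(I_j)}$ over $s\in\mathcal{S}^{-1}_{p,\knots}$ termwise.

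Given this decoupling, the first step is to invoke Lemma~\ref{lem:poly} on each interval $I_j$ individually, with $[a,b]$ replaced by $[\xi_j,\xi_{j+1}]$. This gives
\begin{equation*}
\|u-S^{-1}_pu\|^2_{L^2(I_j)} \leq \left(\frac{\xi_{j+1}-\xi_j}{2}\right)^{2r}\frac{(p+1-r)!}{(p+1+r)!}\,\|\partial^r u\|^2_{L^2(I_j)}.
\end{equation*}
Next, using the bound $\xi_{j+1}-\xi_j \leq h$ from \eqref{eq:hmax}, the prefactor $((\xi_{j+1}-\xi_j)/2)^{2r}$ is dominated by $(h/2)^{2r}$ uniformly in $j$.

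Finally, summing over $j=0,\ldots,\nknots$ and using additivity of the squared $L^2$-norm over the partition, we get
\begin{equation*}
\|u-S^{-1}_pu\|^2 = \sum_{j=0}^{\nknots} \|u-S^{-1}_pu\|^2_{L^2(I_j)} \leq \left(\frac{h}{2}\right)^{2r}\frac{(p+1-r)!}{(p+1+r)!}\,\|\partial^r u\|^2,
\end{equation*}
and taking square roots yields the claim. There is no real obstacle here; the argument is essentially a book-keeping reduction to the polynomial estimate of Lemma~\ref{lem:poly}, and the only point that needs a sentence of justification is the decoupling of $S_p^{-1}$ across knots, which follows immediately from the absence of any continuity constraint in $\mathcal{S}^{-1}_{p,\knots}$.
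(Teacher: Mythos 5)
Your proof is correct and follows exactly the paper's approach: the paper's own proof is the one-line statement that the result ``follows from Lemma~\ref{lem:poly} applied to each knot interval,'' and your argument simply fills in the details (decoupling of the projector across knots, bounding each interval length by $h$, and summing the squared local estimates). No issues.
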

\begin{proof}
This follows from Lemma~\ref{lem:poly} applied to each knot interval.
\end{proof}
\begin{example}\label{ex:discont}
For $r=1$ we have 
\begin{equation*} 
\|u-S^{-1}_pu\|\leq \C{p}\|\partial u\|.
\end{equation*}
\end{example}

We are now ready to derive an error estimate for the $L^2$-projection onto an arbitrarily smooth spline space $\mathcal{S}^{k}_{p,\knots}$. 
We start by observing that if $\mathcal{Z}_0=\mathcal{S}^{-1}_{p-k-1,\knots}$ we have
$\mathcal{Z}_{k+1}=\mathcal{S}^k_{p,\knots}$,
for the sequence of spaces in \eqref{eq:Xsimpl}.
Specifically,
\begin{equation*}
\mathcal{S}^k_{p,\knots} = \mathcal{P}_0+K(\mathcal{S}^{k-1}_{p-1,\knots}) = \mathcal{P}_0+K^*(\mathcal{S}^{k-1}_{p-1,\knots}), \quad k\geq0,
\end{equation*}
and from Lemma~\ref{lem:discont} (and Example~\ref{ex:discont}) we deduce that 
\begin{equation} \label{eq:discont}
\Cgen_{0,r} \leq \left(\frac{h}{2}\right)^{r}\sqrt{\dfrac{(p-k-r)!}{(p-k+r)!}},
\quad \Cupper\leq \frac{h}{2\sqrt{(p-k)(p-k+1)}},
\end{equation}
for any $r$ such that $\mathcal{P}_{r-1}\subseteq\mathcal{Z}_0=\mathcal{S}^{-1}_{p-k-1,\knots}$; see Remark~\ref{rem:Andrea}.
We then define the constant $c_{p,k,r}$ for $p\geq r-1$ as follows. 
If $k\leq p-2$, we~let 
\begin{equation*}
c_{p,k,r}:=\left(\frac{1}{2}\right)^{r}\begin{cases}
\left(\dfrac{1}{\sqrt{(p-k)(p-k+1)}}\right)^r, &k\geq r-2,\\[0.5cm]
\left(\dfrac{1}{\sqrt{(p-k)(p-k+1)}}\right)^{k+1}\sqrt{\dfrac{(p+1-r)!}{(p-1+r-2k)!}}, & k<r-2,
\end{cases}
\end{equation*}
and if $k=p-1$, we let
\begin{equation*}
c_{p,p-1,r}:=\left(\frac{1}{\pi}\right)^r.
\end{equation*} 
By combining \cite[Theorem~1.1]{Sande:2019} with Theorem~\ref{thm:L2} (and Example~\ref{ex:L2simple}) we obtain the following error estimate.
\begin{theorem}\label{thm:low-order}
Let $u\in H^r(a,b)$ be given. For any knot sequence $\knots$, let $S^k_p$ be the $L^2$-projector onto $\mathcal{S}^k_{p,\knots}$ for $-1\leq k\leq p-1$. Then,
\begin{equation*}
\|u-S^k_pu\|\leq c_{p,k,r}h^r\|\partial^ru\|,
\end{equation*}
for all $p\geq r-1$.
\end{theorem}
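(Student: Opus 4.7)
The plan is to isolate the three regimes singled out in the definition of $c_{p,k,r}$ and prove the estimate in each separately. The maximal-smoothness case $k=p-1$ is already covered by \cite[Theorem~1.1]{Sande:2019}, which asserts $\|u-S_pu\|\leq (h/\pi)^r\|\partial^ru\|$; this is exactly $c_{p,p-1,r}h^r\|\partial^ru\|$. For the remaining regime $-1\leq k\leq p-2$ I would cast the problem into the abstract framework of Section~\ref{sec:gen} by taking $\mathcal{Z}_0:=\mathcal{S}^{-1}_{p-k-1,\knots}$ and letting $K$ be the Volterra operator \eqref{eq:Kint}. The identity recorded just before the theorem then yields $\mathcal{Z}_{k+1}=\mathcal{S}^k_{p,\knots}$, so that $S^k_p=Z_{k+1}$. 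Since $p\geq r-1$, the inclusion $\mathcal{P}_{r-1}\subseteq \mathcal{S}^k_{p,\knots}$ holds, and Theorem~\ref{thm:L2} delivers
\[
\|u-S^k_pu\|\leq \Cgen_{k+1,r}\|\partial^ru\|.
\]
From \eqref{eq:discont} we also have $\Cupper\leq h/\bigl(2\sqrt{(p-k)(p-k+1)}\bigr)$, so it only remains to bound $\Cgen_{k+1,r}$ in terms of $\Cupper$ and the explicit constant from Lemma~\ref{lem:discont}.

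If $k\geq r-2$, then $\prec:=k+1\geq r-1$ and Lemma~\ref{lem:simple} directly gives $\Cgen_{k+1,r}\leq \Cupper^{r}$; inserting the bound on $\Cupper$ yields
\[
\|u-S^k_pu\|\leq \left(\frac{h}{2\sqrt{(p-k)(p-k+1)}}\right)^r\|\partial^ru\|,
\]
which matches $c_{p,k,r}h^r\|\partial^r u\|$ in this subregime.

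The delicate case is $k<r-2$, where $k+1<r-1$ and iterating all the way down with Lemma~\ref{lem:simple} is no longer enough to reach the full order $r$. Here I would invoke Lemma~\ref{lem:2simpl} with $s=k+1$ to split
\[
\Cgen_{k+1,r}\leq \Cgen_{k+1,k+1}\,\Cgen_{0,r-k-1},
\]
and bound the first factor by Lemma~\ref{lem:simple} applied with $\prec=\rbar=k+1$, which gives $\Cgen_{k+1,k+1}\leq \Cupper^{k+1}$. The second factor equals $\|(I-Z_0)K^{r-k-1}\|$, i.e., the $L^2$-error of the piecewise-polynomial projector onto $\mathcal{S}^{-1}_{p-k-1,\knots}$ measured against the $(r-k-1)$-th derivative, so Lemma~\ref{lem:discont} (applied with degree $p-k-1$ and order $r-k-1$, which is admissible since $p\geq r-1$) provides the estimate $\Cgen_{0,r-k-1}\leq (h/2)^{r-k-1}\sqrt{(p+1-r)!/(p+r-2k-1)!}$. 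Multiplying the two factors and using $p+r-2k-1=p-1+r-2k$ reproduces the second branch of $c_{p,k,r}h^r$. The main obstacle is the careful bookkeeping to verify that all the admissibility hypotheses of Theorem~\ref{thm:L2} and Lemmas~\ref{lem:simple}, \ref{lem:2simpl}, and \ref{lem:discont} are simultaneously satisfied in each subregime; once these are checked, the estimate follows by direct multiplication of the explicit constants already in hand.
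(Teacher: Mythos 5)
Your proposal is correct and follows essentially the same route as the paper: the case $k=p-1$ is delegated to \cite[Theorem~1.1]{Sande:2019}, the case $k\geq r-2$ uses Theorem~\ref{thm:L2} together with Lemma~\ref{lem:simple} and the bound on $\Cupper$ from \eqref{eq:discont}, and the case $k<r-2$ uses the splitting $\Cgen_{k+1,r}\leq \Cgen_{k+1,k+1}\Cgen_{0,r-k-1}\leq \Cupper^{k+1}\Cgen_{0,r-k-1}$ via Lemmas~\ref{lem:2simpl} and~\ref{lem:simple}, exactly as in the paper's proof. The admissibility checks you flag (e.g., $p-k-1\geq r-k-2$ following from $p\geq r-1$) all go through as you indicate.
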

\begin{proof}
For $k=p-1$, this result has been shown in \cite[Theorem~1.1]{Sande:2019}; see inequality \eqref{eq:Sande-intro}. 
Now, let $k\leq p-2$.
For $r\leq k+2$, the result follows from Example~\ref{ex:L2simple} (with $\prec=k+1$) and the bound for $\Cupper$ in \eqref{eq:discont}.
On the other hand, for $r>k+2$, we use Theorem~\ref{thm:L2} (with $\prec=k+1$), since $\mathcal{P}_{r-1}$ is a subspace of $\mathcal{Z}_{k+1}=\mathcal{S}^k_{p,\knots}$ for all $p\geq r-1$.
Then, applying Lemma~\ref{lem:2simpl} (with $\prec=k+1$) and Lemma~\ref{lem:simple} (with $\prec=k+1$) we get
\begin{equation*}
\Cgen_{k+1,r}\leq \Cgen_{k+1,k+1}\Cgen_{0,r-k-1}\leq \Cupper^{k+1}\Cgen_{0,r-k-1},
\end{equation*}
and the bounds in \eqref{eq:discont} complete the proof.
\end{proof}

\begin{remark}\label{rem:sharpness}
In the case $p=0$ and $r=1$, the error estimate in Lemma~\ref{lem:poly} reduces to
\begin{equation*}
\|u-P_0u\|\leq \frac{b-a}{2\sqrt{2}}\|\partial u\|,
\end{equation*}
for any $u\in H^1(a,b)$.
The above constant is very close, but not equal, to the sharp constant given by the Poincar\'e inequality:
\begin{equation*}
\|u-P_0u\|\leq \frac{b-a}{\pi}\|\partial u\|.
\end{equation*}
How close \eqref{ineq:poly} is to being sharp for degrees $p\geq 1$ is an open question. However, we would like to highlight that any improvement upon the error estimate in \eqref{ineq:poly} could be used in \eqref{eq:discont}, and in the proof of Theorem \ref{thm:low-order}, to immediately deduce sharper constants for spline approximation.
\end{remark}

\begin{remark}\label{rmk:low-order-simple}
We can bound $c_{p,k,r}$ for $k\leq p-2$ as follows. For $r\leq k+2$, we have
\begin{equation*}
c_{p,k,r}\leq \left(\frac{1}{2(p-k)}\right)^r,
\end{equation*}
while for $r>k+2$, using the Stirling formula (see, e.g., \cite[proof of Corollary~3.12]{Schwab:99}), we get
\begin{equation*}
c_{p,k,r}\leq \left(\frac{1}{2(p-k)}\right)^r \left(\frac{e}{2}\right)^{\frac{(r-k-1)^2}{p-k}}
\leq \left(\frac{e}{4(p-k)}\right)^r.
\end{equation*}
As a consequence, the estimate in Theorem~\ref{thm:low-order} can be simplified to
\begin{equation}\label{eq:L2basic}
\|u-S^k_pu\|\leq \left(\frac{e\, h}{4(p-k)}\right)^r\|\partial^ru\|,
\end{equation}
for all $p\geq r-1$.
This is in agreement with the estimate in \cite[Theorem~2]{Buffa:11}.
\end{remark}

\begin{remark}\label{rmk:comparison}
Numerical experiments reveal that smoother spline spaces exhibit a better approximation behavior per degree of freedom; see, e.g., \cite{Evans:2009}. It was observed in \cite{Buffa:11}, however, that a simple error estimate like \eqref{eq:L2basic} does not capture this behavior properly.
The sharper estimate in Theorem~\ref{thm:low-order} seems to provide a more accurate description of this behavior. Now, let $(a,b)=(0,1)$. Assuming a uniform knot sequence $\knots$ and $h\ll1$, the spline dimension can be measured by
\begin{equation}\label{eq:comparison-dim}
n:=\dim(\mathcal{S}_{p,\knots}^k)=\frac{p-k}{h}+k+1\simeq\frac{p-k}{h}.
\end{equation}
Hence, 
the estimate in Theorem~\ref{thm:low-order} can be rephrased as
\begin{equation} \label{eq:dof}
\|u-S^k_pu\|\lesssim c_{p,k,r}\left(\frac{p-k}{n}\right)^r\|\partial^ru\|,
\end{equation}
for all $p\geq r-1$. As illustrated in Example~\ref{ex:comparison} (Figure~\ref{fig:comparison}) and Example~\ref{ex:comparison-max-r} (Figure~\ref{fig:comparison-max-r}), numerical evaluation indicates that
\begin{equation*}
c_{p,k_1,r}(p-k_1)^r < c_{p,k_2,r}(p-k_2)^r, \quad k_1>k_2. 
\end{equation*}
This is in agreement with the numerical evidence found in the literature that, for fixed spline degree, smoother spline spaces have better approximation properties per degree of freedom, even for low smoothness of the functions to be approximated. 
We refer the reader to \cite{Bressan:2019} for a more exhaustive theoretical comparison of the approximation power of spline spaces per degree of freedom in the extreme cases $k=-1,0,p-1$.
\end{remark}

\begin{figure}[t!]
\centering
\includegraphics[scale=0.65]{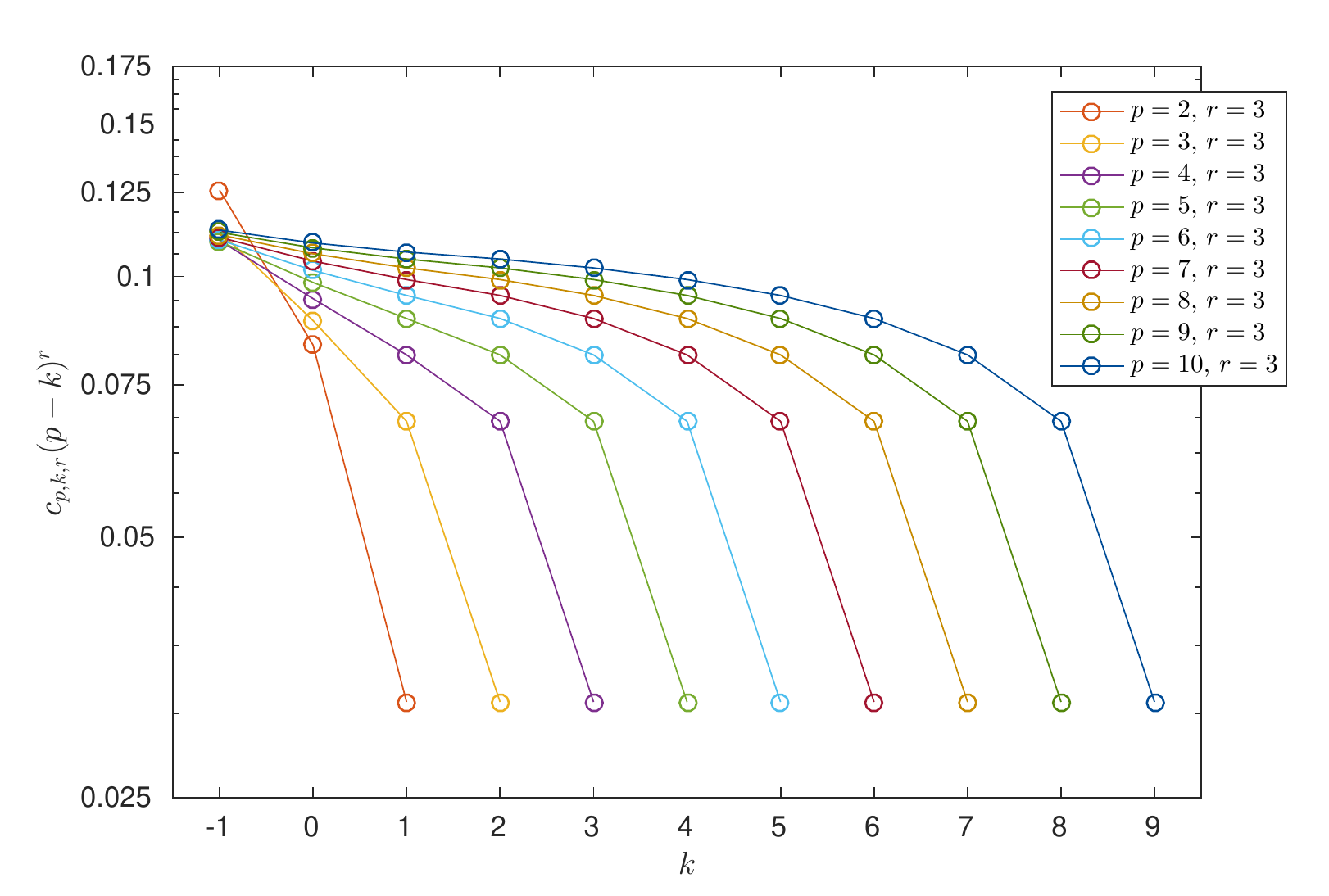}
\caption{Numerical values of $c_{p,k,r}(p-k)^r$ for $r=3$ and different choices of $p\geq r-1$ and $-1\leq k\leq p-1$.
For any fixed $p$, one notices that the values are decreasing for increasing $k$. This means that the smoother spline spaces perform better in the error estimate \eqref{eq:dof} for fixed spline dimension.}\label{fig:comparison}
\medskip
\centering
\includegraphics[scale=0.65]{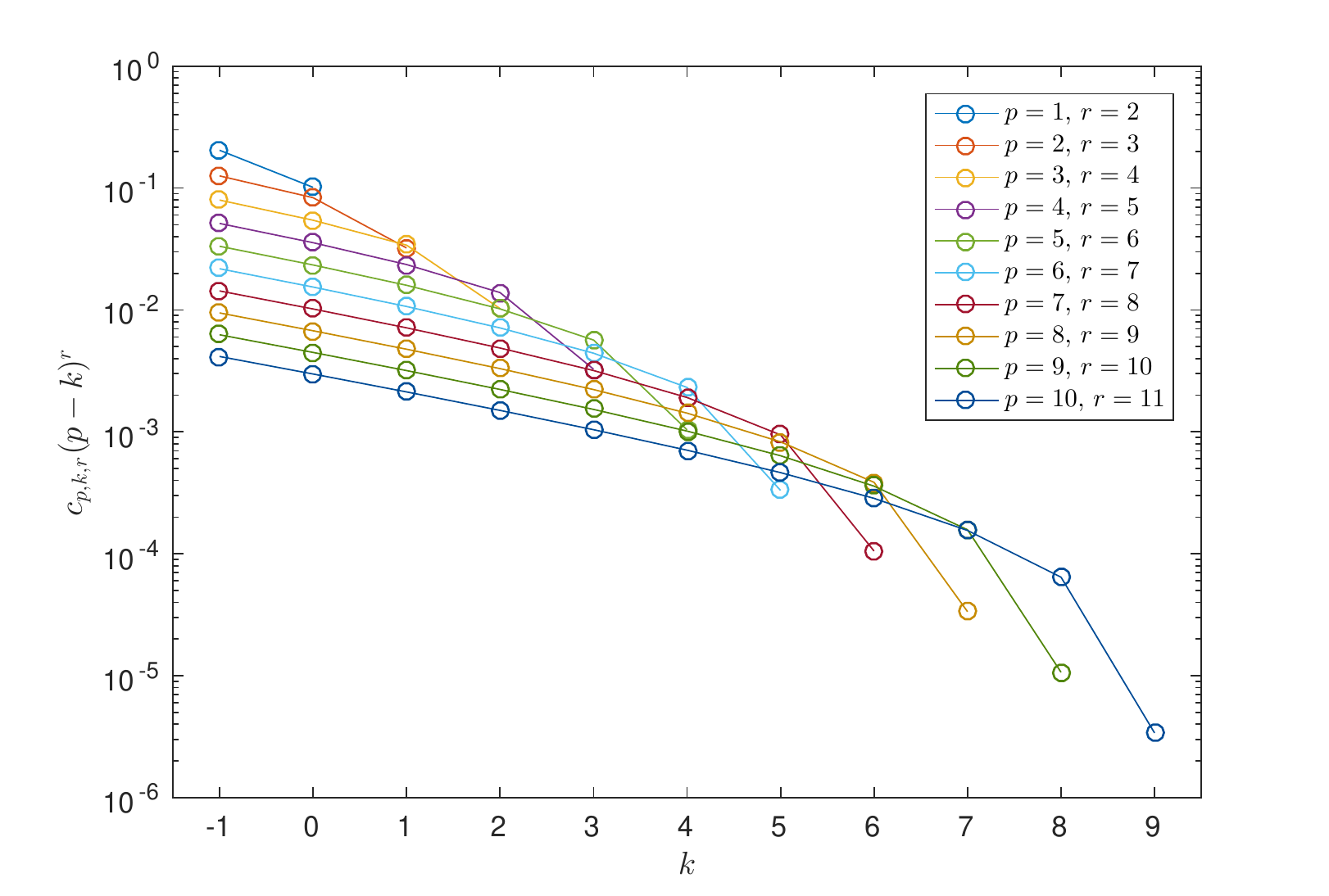}
\caption{Numerical values of $c_{p,k,r}(p-k)^r$ for $r=p+1$ and different choices of $p\geq1$ and $-1\leq k\leq p-1$.
For any fixed $p$, one notices that the values are decreasing for increasing $k$. This means that the smoother spline spaces perform better in the error estimate \eqref{eq:dof} for fixed spline dimension.}\label{fig:comparison-max-r}
\end{figure}

\begin{example}\label{ex:comparison}
Let $r=3$. Figure~\ref{fig:comparison} depicts the numerical values of $c_{p,k,3}(p-k)^3$ for different choices of $p$ and $k$. 
We clearly see that the smallest values are attained for maximal spline smoothness $k=p-1$, namely $c_{p,p-1,3}=(1/\pi)^3\approx 0.0323$.
\end{example}

\begin{example}\label{ex:comparison-max-r}
Consider now the maximal Sobolev smoothness $r=p+1$. Figure~\ref{fig:comparison-max-r} depicts the numerical values of $c_{p,k,p+1}(p-k)^{p+1}$ for different choices of $p$ and $k$. 
For any fixed $p$, one notices that the values are decreasing for increasing $k$, and hence the smallest values are attained for maximal spline smoothness $k=p-1$.
\end{example}

By utilizing Lemma~\ref{lem:poly} once again we can further sharpen the error estimate in Theorem~\ref{thm:low-order}.
Let us now define $C_{h,p,k,r}$ by
\begin{equation}\label{eq:Chp}
C_{h,p,k,r}:=\min\left\{c_{p,k,r}h^r,\left(\frac{b-a}{2}\right)^r\sqrt{\frac{(p+1-r)!}{(p+1+r)!}}\right\},
\end{equation} 
for $p\geq \max\{r-1,k+1\}$. Since $\mathcal{P}_p\subseteq\mathcal{S}^{k}_{p,\knots}$, the following result immediately follows from Lemma~\ref{lem:poly} and Theorem~\ref{thm:low-order}.
\begin{corollary}\label{cor:low-order}
Let $u\in H^r(a,b)$ be given. For any knot sequence $\knots$, let $S^k_p$ be the $L^2$-projector onto $\mathcal{S}^{k}_{p,\knots}$ for $-1\leq k\leq p-1$. Then,
\begin{equation*}
\|u-S^k_pu\|\leq C_{h,p,k,r}\|\partial^ru\|,
\end{equation*}
for all $p\geq r-1$.
\end{corollary}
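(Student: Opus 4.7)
The plan is to exhibit two distinct upper bounds on $\|u - S^k_p u\|$ and then take their minimum, which is exactly the definition of $C_{h,p,k,r}$.

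First, Theorem~\ref{thm:low-order} already supplies the bound $\|u - S^k_p u\| \le c_{p,k,r} h^r \|\partial^r u\|$ for all $p \ge r - 1$. This is the first term inside the minimum in the definition \eqref{eq:Chp}, so nothing further is needed on that side.

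Second, I would exploit the inclusion $\mathcal{P}_p \subseteq \mathcal{S}^k_{p,\knots}$, which holds for any $-1 \le k \le p-1$ and any knot sequence $\knots$ since polynomials of degree at most $p$ are trivially $C^\infty$ and restrict to polynomials on each $I_j$. Because $S^k_p$ is the $L^2$-projector onto $\mathcal{S}^k_{p,\knots}$, it is the best $L^2$-approximation from that space; in particular, since $P_p u \in \mathcal{P}_p \subseteq \mathcal{S}^k_{p,\knots}$, one has
\begin{equation*}
\|u - S^k_p u\| \le \|u - P_p u\|.
\end{equation*}
Applying Lemma~\ref{lem:poly} to the right-hand side yields the second bound
\begin{equation*}
\|u - S^k_p u\| \le \left(\frac{b-a}{2}\right)^r \sqrt{\frac{(p+1-r)!}{(p+1+r)!}}\,\|\partial^r u\|,
\end{equation*}
valid for all $p \ge r - 1$ (and automatically $p \ge k+1$ given $k \le p-1$, which matches the range in \eqref{eq:Chp}).

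Combining the two displays and taking the minimum of the two constants in front of $\|\partial^r u\|$ gives the claimed inequality with constant $C_{h,p,k,r}$ from \eqref{eq:Chp}. There is no genuine obstacle here: the work has already been done in Theorem~\ref{thm:low-order} (for the $h$-dependent bound) and Lemma~\ref{lem:poly} (for the $(b-a)$-dependent bound), and the only ingredient beyond quoting these results is the elementary observation that the $L^2$-projector onto a larger space cannot do worse than the $L^2$-projector onto a contained subspace.
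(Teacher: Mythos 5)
Your proof is correct and is exactly the argument the paper intends: the text preceding Corollary~\ref{cor:low-order} notes that since $\mathcal{P}_p\subseteq\mathcal{S}^{k}_{p,\knots}$, the result follows immediately from Lemma~\ref{lem:poly} (via the best-approximation property of the $L^2$-projector) and Theorem~\ref{thm:low-order}, with $C_{h,p,k,r}$ being the minimum of the two resulting constants. You have simply written out the same one-line argument in full detail.
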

The above corollary shows that $\Cgen_{\prec,r}\leq C_{h,p,k,r}$ for $\mathcal{Z}_\prec=\mathcal{S}^{k}_{p,\knots}$; see Remark~\ref{rem:Andrea}.
Note that for $k=-1$, the constant $C_{h,p,k,r}$ equals $c_{p,k,r}h^r$ for any $p$, $h$ and $r$. However, for large $k$ and $p$ (compared to $1/h$) the second argument in \eqref{eq:Chp} can become smaller than the first. The error estimate in Corollary~\ref{cor:low-order} will in this case then coincide with the error estimate for global polynomial approximation. We will look closer at this case in the next section; 
see in particular Figure~\ref{fig:break-arguments-C}.

In many applications one would be interested in finding a single spline function that can provide a good approximation of all derivatives of $u$ up to a given number $q$. 
Derivative estimates for the $L^2$-projection could be obtained under some quasi-uniformity assumptions on the knot sequence which ensure stability of the $L^2$-projection in the $H^1$ semi-norm; see, e.g., \cite[Theorem~2]{Crouzeix:1987} for such conditions in the case $k=0$. However, these assumptions can be avoided by using a Ritz projection.
As a special case of \eqref{def:Ritz} we define, for any $q=0,\ldots,k+1$, the Ritz projector $R_p^{q,k}: H^q(a,b)\to \mathcal{S}^k_{p,\knots}$ by
\begin{equation}\label{def:Ritz-lower}
\begin{aligned}
(\partial^{q}R_p^{q,k} u,\partial^qv) &= (\partial^q u, \partial^q v), \quad &&\forall v\in \mathcal{S}^k_{p,\knots},
\\
(R_p^{q,k}u,g)&=(u,g), &&\forall g\in \mathcal{P}_{q-1}.
\end{aligned}
\end{equation}
As a consequence of Theorem~\ref{thm:Ritz} we have the following error estimate.
\begin{corollary}\label{cor:Ritz-lower}
Let $u\in H^r(a,b)$  be given.
For any degree $p$, knot sequence $\knots$ and smoothness $-1\leq k\leq p-1$, let $R_p^{q,k}$ be the projector onto $\mathcal{S}^{k}_{p,\knots}$ defined in \eqref{def:Ritz-lower} for $q=0,\ldots,\min\{k+1,r\}$. Then, for any $\ell=0,\ldots,q$, we have
\begin{equation*}
\|\partial^{\ell}(u-R^{q,k}_pu)\| \leq C_{h,p-q,k-q,q-\ell} C_{h,p-q,k-q,r-q}\|\partial^ru\|,
\end{equation*}
for all $p\geq \max\{q,r-1,2q-\ell-1\}$.
\end{corollary}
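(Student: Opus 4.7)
The plan is to recognize that this corollary is essentially a specialization of Theorem~\ref{thm:Ritz} to the spline setting, after bounding the abstract constants $\Cgen_{\prec-q,\cdot}$ via Corollary~\ref{cor:low-order}. Nothing analytically new is needed; the task is to bookkeep the parameter shifts correctly.

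First I would set the abstract framework of Section~\ref{sec:gen} to match the spline context. Taking $\mathcal{Z}_0 := \mathcal{S}^{-1}_{p-k-1,\knots}$ and $K$ as the Volterra operator \eqref{eq:Kint}, the iterative construction \eqref{eq:Xsimpl} gives $\mathcal{Z}_{k+1} = \mathcal{S}^k_{p,\knots}$, as already recorded in the discussion before Theorem~\ref{thm:low-order}. With $\prec := k+1$, the projector $R^{q,k}_p$ defined in \eqref{def:Ritz-lower} is precisely the projector $R^q_\prec$ from \eqref{def:Ritz}. Moreover, the same identification applied after differentiating $q$ times shows that $\mathcal{Z}_{\prec-q}$ corresponds to $\mathcal{S}^{k-q}_{p-q,\knots}$, which is the space that appears in the constants on the right-hand side of the corollary.

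Next, I would verify the hypotheses of Theorem~\ref{thm:Ritz}. The condition $\prec \geq q$ becomes $k+1 \geq q$, which is implied by the assumption $q \leq \min\{k+1,r\}$. The inclusions $\mathcal{P}_{r-q-1} \subseteq \mathcal{Z}_{\prec-q} = \mathcal{S}^{k-q}_{p-q,\knots}$ and $\mathcal{P}_{q-\ell-1} \subseteq \mathcal{Z}_{\prec-q}$ translate to $p-q \geq r-q-1$ and $p-q \geq q-\ell-1$, that is $p \geq r-1$ and $p \geq 2q-\ell-1$, exactly the lower bounds stated in the corollary together with $p \geq q$. Theorem~\ref{thm:Ritz} then yields
\begin{equation*}
\|\partial^{\ell}(u-R^{q,k}_p u)\| \leq \Cgen_{\prec-q,\,q-\ell}\,\Cgen_{\prec-q,\,r-q}\,\|\partial^r u\|.
\end{equation*}

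Finally, I would invoke Corollary~\ref{cor:low-order} on the space $\mathcal{S}^{k-q}_{p-q,\knots}$, which, as noted in the remark immediately following that corollary, gives $\Cgen_{\prec-q,\,s} \leq C_{h,p-q,k-q,s}$ for any admissible $s$ (cf.\ Remark~\ref{rem:Andrea}). Applying this with $s = q-\ell$ and $s = r-q$ transforms the bound above into the stated one. The only real obstacle is the index bookkeeping: one must check that every application of Corollary~\ref{cor:low-order} takes place on a space of degree at least $s-1$, and these conditions are precisely what the lower bounds $p \geq r-1$ and $p \geq 2q-\ell-1$ ensure.
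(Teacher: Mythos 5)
Your proposal is correct and follows essentially the same route as the paper, which states this corollary as a direct consequence of Theorem~\ref{thm:Ritz} without spelling out the details: you identify $\mathcal{Z}_{k+1}=\mathcal{S}^k_{p,\knots}$ and $\mathcal{Z}_{k+1-q}=\mathcal{S}^{k-q}_{p-q,\knots}$, translate the polynomial-inclusion hypotheses into the stated lower bounds on $p$, and bound the abstract constants by $C_{h,p-q,k-q,\cdot}$ via Corollary~\ref{cor:low-order} and Remark~\ref{rem:Andrea}. The index bookkeeping you carry out is exactly what the paper leaves implicit, and it checks out.
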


\begin{remark}\label{rmk:Ritz-lower-simple}
Using the definition of $C_{h,p,k,r}$ together with the argument in Remark~\ref{rmk:low-order-simple}, we can simplify the result in Corollary~\ref{cor:Ritz-lower} as follows. For any $q=0,\ldots,\min\{k+1,r\}$ and $\ell=0,\ldots,q$, we have
\begin{align*}
\|\partial^{\ell}(u-R^{q,k}_pu)\| &\leq c_{p-q,k-q,q-\ell} c_{p-q,k-q,r-q} h^{r-\ell}\|\partial^ru\|
\leq \left(\frac{e\, h}{4(p-k)}\right)^{r-\ell}\|\partial^ru\|,
\end{align*}
for all $p\geq \max\{q,r-1,2q-\ell-1\}$.
Since this estimate is explicit in $h$ and $p$, it is very useful for $h$-$p$ refinement.
\end{remark}

\begin{example}
Similar to Example~\ref{ex:stab-lower} we let $q=1$. Then, for any $u\in H^r(a,b)$ and $k\geq 0$ we have the stability estimates
\begin{align*}
\|\partial R^{1,k}_pu\|&= \|S^{k-1}_{p-1}\partial u\|\leq  \|\partial u\|,
\\
\|R^{1,k}_pu\|&\leq \|u\| +C_{h,p-1,k-1,1}\|\partial u\|
\leq \|u\| +\frac{e\, h}{4(p-k)}\|\partial u\|,
\end{align*}
and, as in Remark~\ref{rmk:Ritz-lower-simple}, the error estimates
 \begin{equation*}
\begin{aligned}
\|u-R^{1,k}_pu\|&\leq C_{h,p-1,k-1,1} C_{h,p-1,k-1,r-1}\|\partial^ru\| 
\leq \left(\frac{e\, h}{4(p-k)}\right)^{r}\|\partial^ru\|,
\\
\|\partial(u-R^{1,k}_pu)\|&\leq C_{h,p-1,k-1,r-1}\|\partial^ru\|
\leq \left(\frac{e\, h}{4(p-k)}\right)^{r-1}\|\partial^ru\|,
\end{aligned}
\end{equation*}
for all $p\geq r-1$.
Thus, $R^{1,k}_pu$ provides a good approximation of both the function $u$ itself, and its first derivative.
\end{example}

\begin{example}
Let $q=2$ and $r=3$. For $R^{2,k}_pu$ to approximate $u\in H^3(a,b)$ in the $L^2$-norm, Corollary~\ref{cor:Ritz-lower} requires the degree to be at least $2q-1=3$, and not $r-1=2$ as one might expect. In view of \eqref{def:Ritz-lower}, this is consistent with the common assumption to solve the biharmonic equation with piecewise polynomials of at least cubic degree for obtaining an optimal rate of convergence in $L^2$; see, e.g., \cite[p.~118]{Strang:73}.
\end{example}

\begin{remark}\label{rmk:comparison-Ritz}
In the spirit of Remark~\ref{rmk:comparison}, the above error estimates for the Ritz projection can also be used to investigate the approximation behavior per degree of freedom. Let $(a,b)=(0,1)$, and assume a uniform knot sequence $\knots$ and $h\ll1$.
Then, keeping the dimension formula \eqref{eq:comparison-dim} in mind, 
the first inequality in Remark~\ref{rmk:Ritz-lower-simple} can be rephrased as: for any $q=\ell,\ldots,\min\{k+1,r\}$, we have
\begin{equation} \label{eq:dof-Ritz}
\|\partial^{\ell}(u-R^{q,k}_pu)\|\lesssim c_{p-q,k-q,q-\ell} c_{p-q,k-q,r-q}\left(\frac{p-k}{n}\right)^{r-\ell}\|\partial^ru\|,
\end{equation}
for all $p\geq \max\{q,r-1,2q-\ell-1\}$. 
As illustrated in Example~\ref{ex:comparison-Ritz-max-r} (Figure~\ref{fig:comparison-Ritz-max-r}), numerical evaluation of the constant in \eqref{eq:dof-Ritz} indicates that our error estimate performs better per degree of freedom for smoother spline spaces, not only in the $L^2$ norm but also in more general $H^\ell$ (semi-)norms. 
\end{remark}

\begin{figure}[t!]
\centering
\includegraphics[scale=0.65]{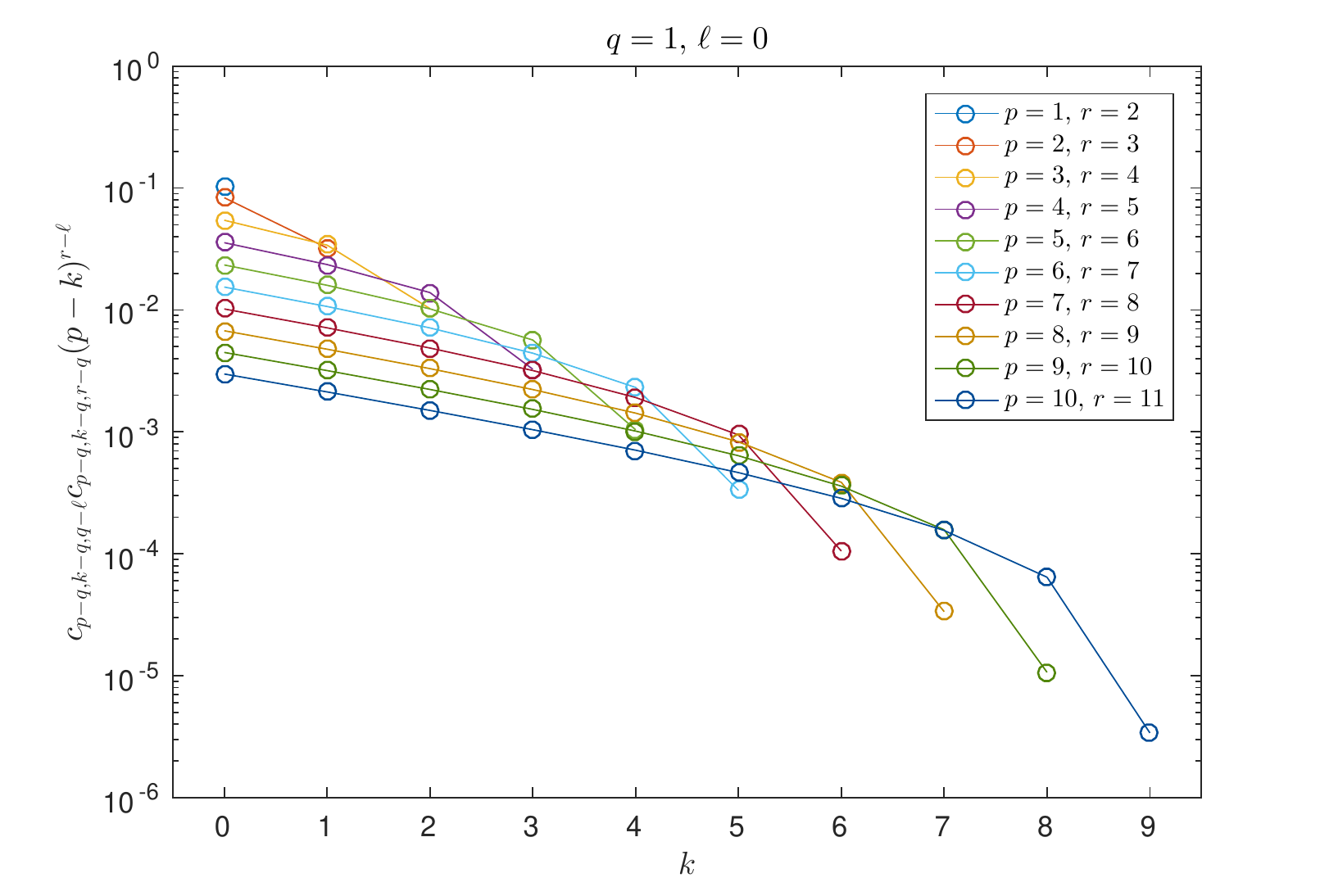} \\
\medskip
\centering
\includegraphics[scale=0.65]{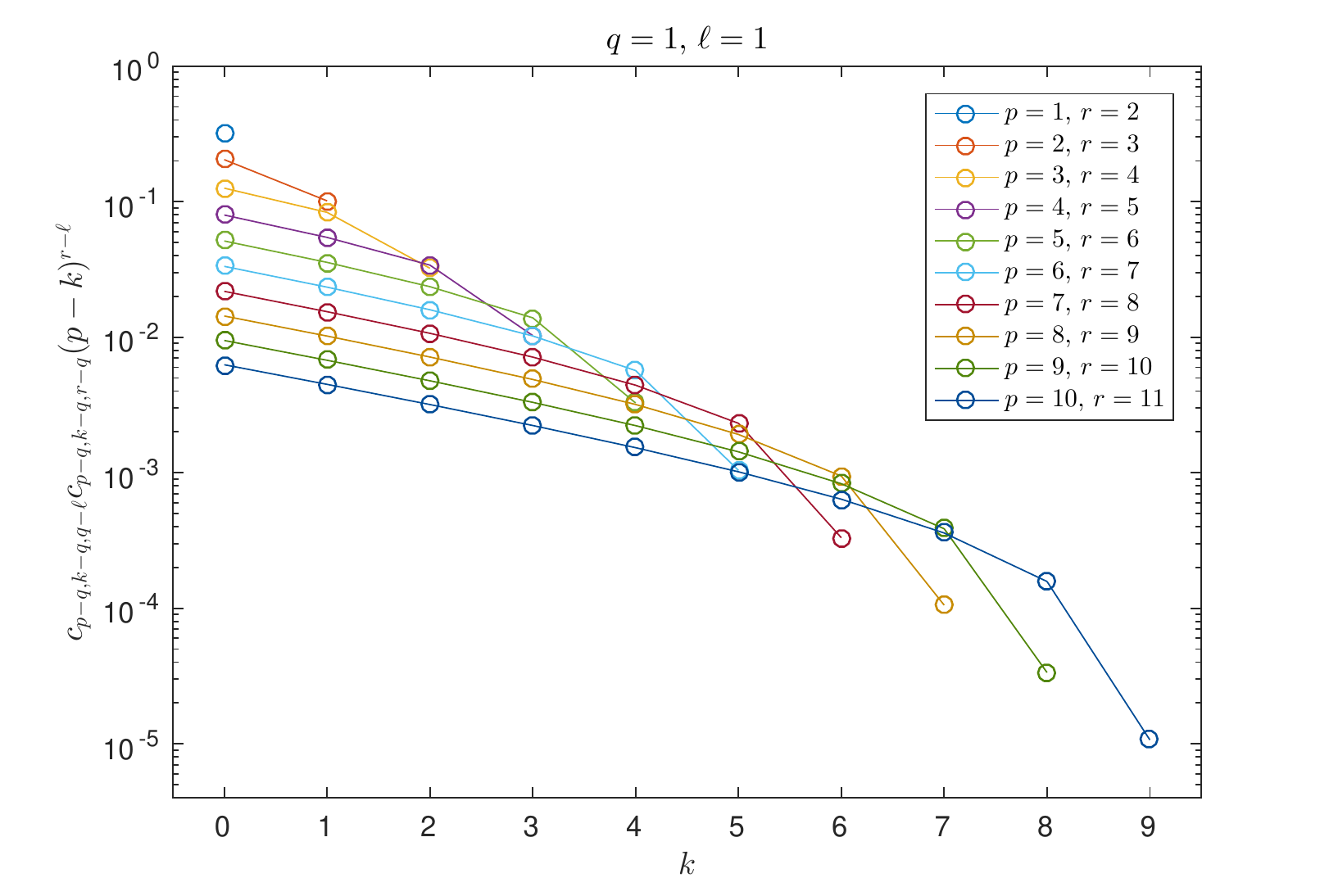}
\caption{Numerical values of $c_{p-q,k-q,q-\ell} c_{p-q,k-q,r-q}(p-k)^{r-\ell}$ for $r=p+1$, $q=1$, $\ell=0,1$, and different choices of $p\geq1$ and $q-1\leq k\leq p-1$.
For any fixed $p$, one notices that the values are decreasing for increasing $k$. This means that the smoother spline spaces perform better in the error estimate \eqref{eq:dof-Ritz} for fixed spline dimension.}\label{fig:comparison-Ritz-max-r}
\end{figure}

\begin{example}\label{ex:comparison-Ritz-max-r}
Let $q=1$ and consider the maximal Sobolev smoothness $r=p+1$. 
Figure~\ref{fig:comparison-Ritz-max-r} depicts the numerical values of $c_{p-q,k-q,q-\ell} c_{p-q,k-q,r-q}(p-k)^{r-\ell}$ for $\ell=0,1$ and different choices of $p$ and $k$. 
For any fixed $p$ and $\ell$, one notices that the values are decreasing for increasing $k$, and hence the smallest values are attained for maximal spline smoothness $k=p-1$.
Since this trend is happening for both $\ell=0,1$, it means that our error estimate performs better per degree of freedom for higher smoothness, in both the $L^2$ and $H^1$ norms, for any fixed $p$. 
Note that the graphs look like the ones in Figure~\ref{fig:comparison-max-r} using the $L^2$-projection. This is not a coincidence because one can check that 
\begin{equation*}
c_{p-1,k-1,1-\ell} c_{p-1,k-1,p} = c_{p-\ell,k-\ell,p+1-\ell},
\end{equation*}
for $0\leq k\leq p-1$ and $\ell=0,1$.
\end{example}

\begin{remark}
The last observation in Example~\ref{ex:comparison-Ritz-max-r} can be generalized as follows. In case of maximal Sobolev regularity $r=p+1$ and $\max\{q-1,2q-\ell-2\}\leq k\leq p-1$, we have
\begin{equation*}
c_{p-q,k-q,q-\ell} c_{p-q,k-q,p+1-q} = c_{p-\ell,k-\ell,p+1-\ell}.
\end{equation*}
\end{remark}

\section{Spline spaces of maximal smoothness}\label{sec:max}
As mentioned in the introduction, the error estimate in \eqref{eq:Sande-intro} is perfectly suited to study the case of grid refinement with maximally smooth splines. However, it provides almost no information in the case of degree elevation. The best it can tell us is that the error will not get worse as $p$ increases. This is in contrast to the standard error estimates for $C^0$ finite element methods (or the case $k=0$ of \eqref{eq:low-order-intro}), which show clear convergence as $p\to\infty$.
In this section we therefore study error estimates for the space of maximally smooth splines in more detail, and in particular, we investigate the $p$-dependence. The main goal is to obtain various estimates for the full $h$-$p$-$k$ refinement scheme, i.e., as $p\to\infty$ and/or as $h\to 0$ under the constraint $k=p-1$.

Let us define the constant $C_{h,p,r}$ by $C_{h,p,r}:=C_{h,p,p-1,r}$ with $C_{h,p,k,r}$ in \eqref{eq:Chp}, or more explicitly by
\begin{equation}\label{eq:Chp-smooth}
C_{h,p,r}:=\min\left\{\left(\frac{h}{\pi}\right)^r,\left(\frac{b-a}{2}\right)^r\sqrt{\frac{(p+1-r)!}{(p+1+r)!}}\right\},
\end{equation}
for $p\geq r-1$. 
As a generalization of \cite[Corollary~6.3]{Takacs:2016} we obtain the following result.
\begin{corollary}\label{cor:smoothL2}
Let $u\in H^r(a,b)$ be given. 
For any knot sequence $\knots$, let $S_p$ be the $L^2$-projector onto $\mathcal{S}_{p,\knots}$. Then, 
\begin{align}
\|u-S_pu\| &\leq C_{h,p,r}\|\partial^ru\|,\label{ineq:smoothL2-1} 
\\
\|u-S_pu\| &\leq C_{h,p,1}C_{h,p-1,1}\cdots C_{h,p-r+1,1}\|\partial^ru\|,\label{ineq:smoothL2-2}
\end{align}
for all $p\geq r-1$.
\end{corollary}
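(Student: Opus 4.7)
The plan is to handle the two inequalities separately, both as short consequences of results that are already available in Sections~\ref{sec:gen} and~\ref{sec:low}.

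Inequality \eqref{ineq:smoothL2-1} requires essentially no work: by the very definition $C_{h,p,r} := C_{h,p,p-1,r}$, this is just the specialization $k = p-1$ of Corollary~\ref{cor:low-order}.

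For inequality \eqref{ineq:smoothL2-2} the idea is to fit maximally smooth spline spaces of successive degrees into the integral-operator scheme of Section~\ref{sec:gen}. I take $K$ to be the left-integration operator of \eqref{eq:Kint} and set $\mathcal{Z}_0 := \mathcal{S}^{-1}_{0,\knots}$. As observed at the start of Section~\ref{sec:low}, the recursion $\mathcal{Z}_\prec = \mathcal{P}_0 + K(\mathcal{Z}_{\prec-1})$ then produces $\mathcal{Z}_\prec = \mathcal{S}_{\prec,\knots}$ for every $\prec \geq 0$, so that $Z_\prec = S_\prec$; the second identity in \eqref{eq:Xsimpl} holds because spline spaces are invariant under the reversal $x \mapsto a+b-x$, which exchanges $K$ and $K^*$. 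Applying Lemma~\ref{lem:simple} at level $\prec = p$ yields
\[
\Cgen_{p,r} \leq \Cgen_{p,1}\,\Cgen_{p-1,1}\cdots \Cgen_{p-r+1,1}.
\]
Each factor $\Cgen_{p-j,1} = \|(I-S_{p-j})K\|$ can then be bounded by invoking Corollary~\ref{cor:low-order} with smoothness $k = p-j-1$ and Sobolev index~$1$: for $f \in L^2(a,b)$ the function $Kf$ lies in $H^1(a,b)$ with $\partial Kf = f$, so $\|(I-S_{p-j})Kf\| \leq C_{h,p-j,1}\|f\|$, i.e.\ $\Cgen_{p-j,1} \leq C_{h,p-j,1}$.

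To finish, given $u \in H^r(a,b)$ I use \eqref{eq:Hr} to write $u = g + K^r f$ with $g \in \mathcal{P}_{r-1} \subseteq \mathcal{S}_{p,\knots}$ and $f = \partial^r u$; then $(I-S_p)u = (I-S_p)K^r f$, so
\[
\|u - S_p u\| \leq \Cgen_{p,r}\,\|\partial^r u\| \leq C_{h,p,1}\,C_{h,p-1,1}\cdots C_{h,p-r+1,1}\,\|\partial^r u\|,
\]
which is \eqref{ineq:smoothL2-2}. The hypothesis $p \geq r-1$ is exactly what is needed both to ensure $\mathcal{P}_{r-1} \subseteq \mathcal{S}_{p,\knots}$ and to keep every intermediate degree $p-j$ (for $j = 0,\ldots,r-1$) nonnegative so that Corollary~\ref{cor:low-order} applies at each step. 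I do not expect a real obstacle: all the heavy lifting sits inside Corollary~\ref{cor:low-order}, and the telescoping step is a direct application of the abstract Lemma~\ref{lem:simple}; the only point to be checked carefully is the identification $\mathcal{Z}_\prec = \mathcal{S}_{\prec,\knots}$ and the self-adjointness condition \eqref{eq:Xsimpl} for this concrete choice, which are both immediate.
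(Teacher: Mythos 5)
Your proposal is correct and follows essentially the same route as the paper: \eqref{ineq:smoothL2-1} as the case $k=p-1$ of Corollary~\ref{cor:low-order}, and \eqref{ineq:smoothL2-2} by identifying $\mathcal{Z}_\prec=\mathcal{S}_{\prec,\knots}$ starting from $\mathcal{Z}_0=\mathcal{S}_{0,\knots}$ and applying Lemma~\ref{lem:simple} with $\prec=p$. You merely make explicit two steps the paper leaves implicit, namely the bound $\Cgen_{p-j,1}\leq C_{h,p-j,1}$ (which is the remark following Corollary~\ref{cor:low-order}) and the final passage via $u=g+K^rf$ (which is Theorem~\ref{thm:L2}).
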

\begin{proof}
The estimate \eqref{ineq:smoothL2-1} is the case $k=p-1$ of Corollary \ref{cor:low-order}. For \eqref{ineq:smoothL2-2}, we first observe that if $\mathcal{Z}_0=\mathcal{S}_{0,\knots}$ we have
$\mathcal{Z}_{p}=\mathcal{S}_{p,\knots}$
for the sequence of spaces in \eqref{eq:Xsimpl}. From Lemma~\ref{lem:simple} (with $\prec=p$) we then obtain \eqref{ineq:smoothL2-2} for $u\in H^r(a,b)$.
\end{proof}

The first argument in the definition of $C_{h,p,r}$ only depends on $h$ and $r$, while the second argument only depends on $p$ and $r$. Hence, it is clear that the second argument is smaller than the first for large enough $p$ with respect to $h$. This is illustrated in the next examples.
\begin{example} \label{ex:comparison-arguments-C}
Let $r=2$ and $b-a=2$. Then, assuming
\begin{equation*}
p>\frac{\pi}{h},
\end{equation*}
we have
\begin{equation*}
C_{h,p,2} = \frac{1}{\sqrt{p(p+1)(p+2)(p+3)}} < \frac{1}{p^2} < \left(\frac{h}{\pi}\right)^2.
\end{equation*}
In this case the error estimate \eqref{ineq:smoothL2-1} is better than \eqref{eq:Sande-intro}.
\end{example}

\begin{figure}[t!]
\centering
\includegraphics[scale=0.7]{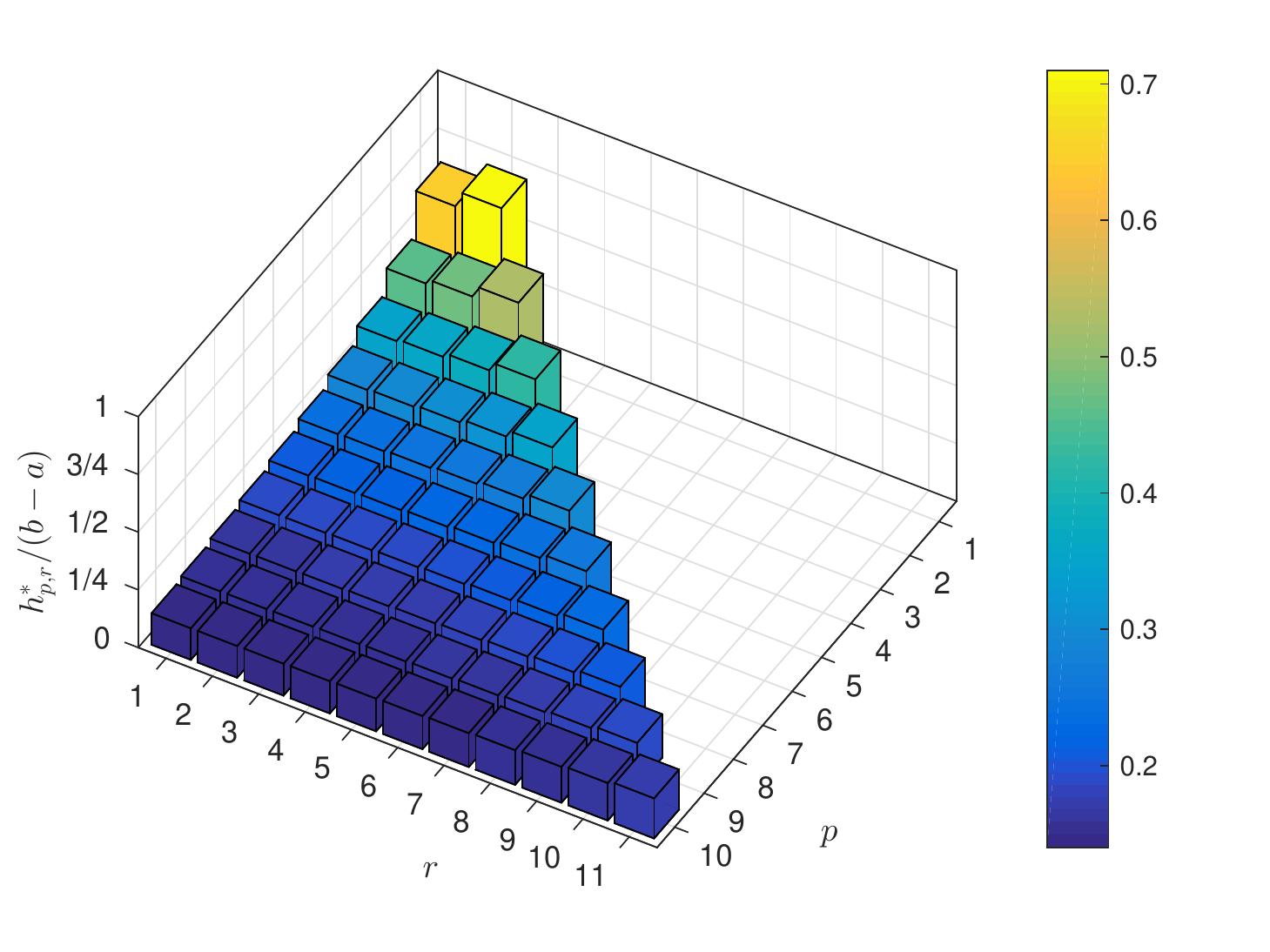}
\caption{The two arguments of $C_{h,p,r}$ in \eqref{eq:Chp-smooth} are equal for $h=h_{p,r}^*$, depicted in normalized form (divided by the interval length $b-a$) for different choices of $r\geq1$ and $p\geq r-1$. Lower values of $h$ will activate the first argument of $C_{h,p,r}$, while higher values of $h$ the second argument of $C_{h,p,r}$.}\label{fig:break-arguments-C}
\end{figure}

\begin{example}\label{ex:break-arguments-C}
Figure~\ref{fig:break-arguments-C} depicts the values $h_{p,r}^*/(b-a)\in[0,1]$ satisfying
\begin{equation*}
\left(\frac{h_{p,r}^*}{\pi}\right)^r=\left(\frac{b-a}{2}\right)^r\sqrt{\frac{(p+1-r)!}{(p+1+r)!}},
\end{equation*}
for different choices of $r$ and $p$. 
It follows that the two arguments of $C_{h,p,r}$ in \eqref{eq:Chp-smooth} are equal for $h=h_{p,r}^*$.
For smaller values of $h$ we have $C_{h,p,r}=(h/\pi)^r$, and then \eqref{ineq:smoothL2-1} coincides with \eqref{eq:Sande-intro}. Otherwise, for larger values of $h$, \eqref{ineq:smoothL2-1} coincides with the estimate for global polynomial approximation in Lemma~\ref{lem:poly}. 
Assuming a uniform knot sequence, we observe that the latter only holds for a rather small number of knot intervals $\nknots=(b-a)/h$ with respect to $p$.
For instance, if $p=10$ and $r=11$, then $h_{p,r}^*/(b-a)\approx0.17$ and so $N$ must be less than or equal to $5$ for the estimate in \eqref{ineq:smoothL2-1} to coincide with the estimate for global polynomial approximation. Similarly, one can check that if $p=10$ and $r=1$, then $N$ must be less than or equal to $7$.
\end{example}

It is easy to see that for fixed $p$ and small enough $h$, both estimates in Corollary~\ref{cor:smoothL2} coincide. Moreover, for fixed $h$ and large enough $p$, \eqref{ineq:smoothL2-1} is a sharper estimate than \eqref{ineq:smoothL2-2}.
On the other hand, as we illustrate in the next example, there are certain choices of $h$ and $p$ such that \eqref{ineq:smoothL2-2} is sharper than \eqref{ineq:smoothL2-1}.
\begin{example}
Let $r=2$ and $b-a=2$. Then, assuming
\begin{equation} \label{eq:assumption-h-p}
  \frac{\pi}{\sqrt{(p+1)(p+2)}}<h<\frac{\pi}{\sqrt{p(p+3)}},
\end{equation}
we have
\begin{align*}
  C_{h,p,1}C_{h,p-1,1}
  =\frac{h}{\pi}\frac{1}{\sqrt{(p+1)(p+2)}}
  <\min\left\{\left(\frac{h}{\pi}\right)^2,\frac{1}{\sqrt{p(p+1)(p+2)(p+3)}}\right\}
  =C_{h,p,2}.
\end{align*}
As a consequence, when $h$ satisfies \eqref{eq:assumption-h-p}, the error estimate \eqref{ineq:smoothL2-2} is sharper than \eqref{ineq:smoothL2-1}.
\end{example}

The estimates in Corollary~\ref{cor:smoothL2} hint towards a complex interplay between $h$ and $p$ in the sense that for a strongly refined grid (very small $h$), increasing the degree $p$ might give little or no benefit, and vice versa.

\begin{remark}\label{rmk:smoothL2-harmonic}
Using the Stirling formula (in the same way as in Remark~\ref{rmk:low-order-simple}), we have
\begin{equation*}
\sqrt{\frac{(p+1-r)!}{(p+1+r)!}}\leq 
\left(\frac{e}{2(p+1)}\right)^{r}.
\end{equation*}
Thus,
\begin{equation*}
C_{h,p,r}\leq\min\left\{\left(\frac{h}{\pi}\right)^r,\left(\frac{e(b-a)}{4(p+1)}\right)^r\right\} = \left(\min\left\{\frac{h}{\pi},\frac{e(b-a)}{4(p+1)}\right\}\right)^r,
\end{equation*}
and by taking the harmonic mean of the two quantities in the above bound, we get
\begin{equation}\label{eq:harmonic}
\|u-S_pu\| \leq \left(\frac{2eh(b-a)}{e\pi(b-a)+4h(p+1)}\right)^r\|\partial^ru\|,
\end{equation}
for all $p\geq r-1$. Even though this estimate is less sharp than the result in Corollary~\ref{cor:smoothL2}, it has the benefit of always decreasing as the grid is refined and/or as the degree is increased. 
\end{remark}

\begin{remark}\label{rmk:smoothL2-harmonic-smallr}
For small values of $r$ (compared to $p$) we can improve upon the estimate in Remark~\ref{rmk:smoothL2-harmonic} as follows. 
Since $C_{h,p-i+1,1} \leq C_{h,p-r+1,1}$ for $i=1,\ldots,r$, Corollary~\ref{cor:smoothL2} implies that
\begin{equation*}
 \|u-S_pu\| \leq (C_{h,p-r+1,1})^r\|\partial^ru\|,
\end{equation*}
for all $p\geq r-1$. 
By taking the harmonic mean of the two quantities in the bound
\begin{equation*}
C_{h,p-r+1,1}\leq \min\left\{\frac{h}{\pi},\frac{b-a}{2(p-r+2)}\right\},
\end{equation*}
we obtain
\begin{equation*}
\|u-S_pu\| \leq \left(\frac{2h(b-a)}{\pi(b-a)+2h(p-r+2)}\right)^r\|\partial^ru\|,
\end{equation*}
for all $p\geq r-1$.
This estimate is sharper than \eqref{eq:harmonic} if $p>\frac{e}{e-2}(r+\frac{2}{e}-2)$. Note that this is always the case if $p\geq 4(r-1)$.
\end{remark}

We now look at some error estimates for the Ritz projection.
Using inequality~\eqref{ineq:Ritzsimple} and Lemma~\ref{lem:simple} we obtain the following result.
\begin{corollary}\label{cor:smoothRitz}
Let $u\in H^r(a,b)$ be given.
For any $q=0,\ldots,r$ and knot sequence $\knots$, let $R_p^q$ be the projector onto $S_{p,\knots}=\mathcal{Z}_p$ defined in \eqref{def:Ritz}. Then, for any $\ell=0,\ldots,q$ we have
\begin{align*}
\|\partial^{\ell}(u-R^q_pu)\| &\leq C_{h,p-q,q-\ell} C_{h,p-q,r-q} \|\partial^ru\|,\\
\|\partial^{\ell}(u-R^q_pu)\| &\leq\left(C_{h,p-q,1}\cdots C_{h,p-2q+\ell + 1,1}\right)\left(C_{h,p-q,1}\cdots C_{h,p-r+1,1} \right) \|\partial^ru\|,
\end{align*}
for all $p\geq \max\{q,r-1,2q-\ell-1\}$.
\end{corollary}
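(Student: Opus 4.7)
The plan is to derive both inequalities directly from Theorem~\ref{thm:Ritz} applied to the concrete choice $\mathcal{Z}_\prec=\mathcal{S}_{p,\knots}$, together with Lemma~\ref{lem:simple} and the bound on $\Cgen_{\prec,r}$ in terms of $C_{h,p,k,r}$ noted just after Corollary~\ref{cor:low-order}.

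First I would set up notation by recalling that with $\mathcal{Z}_0=\mathcal{S}_{0,\knots}$ the recursion in \eqref{eq:Xsimpl} yields $\mathcal{Z}_j=\mathcal{S}_{j,\knots}=\mathcal{S}^{j-1}_{j,\knots}$ for every $j\geq 0$, as already observed in the proof of Corollary~\ref{cor:smoothL2}. Taking $\prec=p$, the relevant space for Theorem~\ref{thm:Ritz} is $\mathcal{Z}_{p-q}=\mathcal{S}_{p-q,\knots}$. I would then verify the admissibility conditions of Theorem~\ref{thm:Ritz}: $\mathcal{P}_{r-q-1}\subseteq\mathcal{S}_{p-q,\knots}$ forces $p\geq r-1$, $\mathcal{P}_{q-\ell-1}\subseteq\mathcal{S}_{p-q,\knots}$ forces $p\geq 2q-\ell-1$, and the standing hypothesis $\prec\geq q$ forces $p\geq q$; together these match the assumed range $p\geq\max\{q,r-1,2q-\ell-1\}$.

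For the first inequality I would invoke Theorem~\ref{thm:Ritz} to obtain
\begin{equation*}
\|\partial^{\ell}(u-R^q_p u)\|\leq \Cgen_{p-q,q-\ell}\,\Cgen_{p-q,r-q}\|\partial^r u\|,
\end{equation*}
and then bound each factor using the remark after Corollary~\ref{cor:low-order}, which states that $\Cgen_{\prec,r}\leq C_{h,p,k,r}$ whenever $\mathcal{Z}_\prec=\mathcal{S}^k_{p,\knots}$. Applied to $\mathcal{Z}_{p-q}=\mathcal{S}^{p-q-1}_{p-q,\knots}$ with $r$ replaced by $q-\ell$ and $r-q$ respectively, this gives $\Cgen_{p-q,q-\ell}\leq C_{h,p-q,p-q-1,q-\ell}=C_{h,p-q,q-\ell}$ and $\Cgen_{p-q,r-q}\leq C_{h,p-q,r-q}$, by definition \eqref{eq:Chp-smooth}. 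Substituting closes the first estimate.

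For the second inequality I would factorize each $\Cgen$ via Lemma~\ref{lem:simple} applied at $\prec=p-q$ with target exponents $q-\ell$ and $r-q$, yielding
\begin{equation*}
\Cgen_{p-q,q-\ell}\leq \Cgen_{p-q,1}\Cgen_{p-q-1,1}\cdots \Cgen_{p-2q+\ell+1,1},
\end{equation*}
\begin{equation*}
\Cgen_{p-q,r-q}\leq \Cgen_{p-q,1}\Cgen_{p-q-1,1}\cdots \Cgen_{p-r+1,1}.
\end{equation*}
The prerequisite $\prec\geq r-1$ of Lemma~\ref{lem:simple} translates here into $p-q\geq q-\ell-1$ and $p-q\geq r-q-1$, which are already guaranteed. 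Each single factor $\Cgen_{j,1}$ with $\mathcal{Z}_j=\mathcal{S}_{j,\knots}$ is then controlled by $C_{h,j,1}$ using again the remark following Corollary~\ref{cor:low-order}, and substituting into the two displayed products yields the second inequality.

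The main obstacle is purely bookkeeping: one has to check that all the index shifts match, that the chain of $\Cgen_{\cdot,1}$ factors terminates at the correct endpoint (namely $p-2q+\ell+1$ for the first chain and $p-r+1$ for the second), and that the admissibility conditions for Theorem~\ref{thm:Ritz} and each invocation of Lemma~\ref{lem:simple} are all implied by the single hypothesis $p\geq\max\{q,r-1,2q-\ell-1\}$. Beyond this careful index tracking, no new ideas are required.
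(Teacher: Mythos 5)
Your proposal is correct and follows essentially the same route as the paper: the paper obtains the corollary by applying inequality \eqref{ineq:Ritzsimple} (i.e., Theorem~\ref{thm:Ritz} with $\mathcal{Z}_p=\mathcal{S}_{p,\knots}$, giving the factor $\Cgen_{p-q,q-\ell}\Cgen_{p-q,r-q}$) and Lemma~\ref{lem:simple}, then bounding each $\Cgen$ by the corresponding $C_{h,\cdot,\cdot}$ exactly as you do via the observation following Corollary~\ref{cor:low-order}. Your index bookkeeping (endpoints $p-2q+\ell+1$ and $p-r+1$, and the admissibility conditions reducing to $p\geq\max\{q,r-1,2q-\ell-1\}$) checks out.
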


\begin{remark}\label{rmk:smoothRitz-simple}
As a generalization of \cite[Theorem~3.1]{Sande:2019}, it follows from Corollary~\ref{cor:smoothRitz} that
for any $q=0,\ldots,r$ and $\ell=0,\ldots,q$,
\begin{equation}
\|\partial^{\ell}(u-R^q_pu)\| \leq \left(\frac{h}{\pi}\right)^{r-\ell}\|\partial^ru\|, \label{eq:smoothRitz-simple}
\end{equation}
for all $p\geq \max\{q,r-1,2q-\ell-1\}$.
Not only is this a very simple and explicit estimate, but it is also very useful for $h$ refinement.
Note that the error estimate for periodic splines in \cite[Theorem~4.1]{Sande:2019} is of the same form as \eqref{eq:smoothRitz-simple} for the corresponding Ritz projection in the case of periodic boundary conditions.
\end{remark}

\begin{remark}\label{rmk:smoothRitz-harmonic}
Following a similar argument as in Remark~\ref{rmk:smoothL2-harmonic}, we get
for any $q=0,\ldots,r$ and $\ell=0,\ldots,q$,
\begin{equation*}
\|\partial^{\ell}(u-R^q_pu)\| \leq  \left(\frac{2eh(b-a)}{e\pi(b-a)+4h(p-q+1)}\right)^{r-\ell} \|\partial^ru\|, 
\end{equation*}
for all $p\geq \max\{q,r-1,2q-\ell-1\}$.
In addition, following a similar argument as in Remark~\ref{rmk:smoothL2-harmonic-smallr}, we get
for any $q=0,\ldots,r$ and $\ell=0,\ldots,q$,
\begin{equation*}
\|\partial^{\ell}(u-R^q_pu)\| \leq  \left(\frac{2h(b-a)}{\pi(b-a)+2h(p+2-\max\{2q-\ell,r\})}\right)^{r-\ell} \|\partial^ru\|, 
\end{equation*}
for all $p\geq \max\{q,r-1,2q-\ell-1\}$.
The latter estimate is sharper than the former one if $p>\frac{e}{e-2}(\max\{2q-\ell,r\}+\frac{2(1-q)}{e}-2)$.
Even though these two estimates are less sharp than the result in Corollary~\ref{cor:smoothRitz}, they have the benefit of always decreasing as the grid is refined and/or as the degree is increased. 
They are therefore useful estimates for $h$-$p$-$k$ refinement.
\end{remark}

\section{Reduced spline spaces}\label{sec:reduced}
The goal of this section is to prove error estimates for the Ritz projection onto certain reduced spline spaces of maximal smoothness studied in \cite{Takacs:2016,Floater:2017,Hofreither:2017,Sogn:2018,Floater:2018,Sande:2019}. To do that we first prove a general result for any integral operator $K$ using ideas from \cite{Floater:2017,Floater:2018}. 

\subsection{General error estimates}\label{subsec:gen}

Let $K$ be any integral operator as in \eqref{eq:K}, and let $\mathcal{X}_0$ and $\mathcal{Y}_0$ be any finite dimensional subspaces of $L^2(a,b)$. We then define the subspaces $\mathcal{X}_\prec$ and $\mathcal{Y}_\prec$ in an analogous way to \eqref{eq:Xsimpl}, by
\begin{equation}\label{eq:Xs}
\mathcal{X}_\prec:=K(\mathcal{Y}_{\prec-1}), \quad \mathcal{Y}_\prec:=K^*(\mathcal{X}_{\prec-1}), 
\end{equation}
for $\prec\geq 1$. Finally, for any $\prec\geq0$, let $X_\prec$ be the $L^2$-projector onto $\mathcal{X}_\prec$ and $Y_\prec$ be the $L^2$-projector onto $\mathcal{Y}_\prec$.
\begin{lemma}\label{lem:gen}
For any $\prec\geq 1$ we have
\begin{align*}
\|K - KY_\prec\| \leq \|K^*-K^*X_{\prec-1}\| \leq \begin{cases}
	\|K-X_{0}K\|, &\prec\text{ odd},
	\\
	\|K^*-Y_0K^*\|, &\prec\text{ even}.
\end{cases}
\end{align*}
\end{lemma}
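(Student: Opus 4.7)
The plan is to exploit two ingredients: (i) the self-adjointness of the $L^2$-projectors $X_\prec$ and $Y_\prec$, which yields the adjoint identity $\|A\| = \|A^*\|$ for operator norms, and (ii) the best-approximation property: $\|(I-Y_\prec)g\| \leq \|g - v\|$ for every $v \in \mathcal{Y}_\prec$, and similarly for $X_\prec$. The recursive definition \eqref{eq:Xs} guarantees that natural candidate approximants lie in the target space.

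For the \emph{first inequality}, I would rewrite
\[
\|K - KY_\prec\| = \|K(I-Y_\prec)\| = \|(I-Y_\prec)K^*\|,
\]
using the adjoint identity and that $Y_\prec^* = Y_\prec$. Then for any $f \in L^2(a,b)$, observe that $X_{\prec-1}f \in \mathcal{X}_{\prec-1}$, and therefore $K^* X_{\prec-1} f \in K^*(\mathcal{X}_{\prec-1}) = \mathcal{Y}_\prec$ by \eqref{eq:Xs}. The best-approximation property of $Y_\prec$ then gives
\[
\|(I-Y_\prec)K^*f\| \leq \|K^*f - K^* X_{\prec-1} f\|,
\]
and taking the supremum over $f$ with unit norm yields $\|K-KY_\prec\| \leq \|K^* - K^* X_{\prec-1}\|$.

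For the \emph{second inequality}, I would run the symmetric argument with the roles of $K,\mathcal{X}_\prec, X_\prec$ and $K^*, \mathcal{Y}_\prec, Y_\prec$ swapped. Specifically, for $\prec \geq 2$,
\[
\|K^* - K^* X_{\prec-1}\| = \|(I-X_{\prec-1})K\|,
\]
and since $KY_{\prec-2} f \in K(\mathcal{Y}_{\prec-2}) = \mathcal{X}_{\prec-1}$, the best-approximation property of $X_{\prec-1}$ gives $\|(I-X_{\prec-1})K\| \leq \|K - KY_{\prec-2}\|$. Combined with the first inequality, one obtains the descending chain
\[
\|K-KY_\prec\| \leq \|K^* - K^* X_{\prec-1}\| \leq \|K - KY_{\prec-2}\| \leq \|K^* - K^* X_{\prec-3}\| \leq \cdots
\]

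The final step is to iterate this chain down to its base. When $\prec$ is odd, the chain terminates at $\|K^* - K^* X_0\|$, which equals $\|K - X_0 K\|$ by one more application of the adjoint identity; when $\prec$ is even, it terminates at $\|K - KY_0\|$, which equals $\|K^* - Y_0 K^*\|$. The main (only) obstacle is bookkeeping the parity and making sure each step alternates correctly between the $K$ and $K^*$ sides so the induction bottoms out at the right base case; the operator-theoretic content of each inequality is just the best-approximation property and self-adjointness of orthogonal projectors.
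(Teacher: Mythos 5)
Your proof is correct and follows essentially the same route as the paper's: the identity $\|K-KY_\prec\|=\|(I-Y_\prec)K^*\|$ via self-adjointness of the orthogonal projector, the observation that $K^*X_{\prec-1}$ maps into $\mathcal{Y}_\prec$ so the best-approximation property yields the first inequality, and then the symmetric step with $K$ and $K^*$ swapped, iterated down to $X_0$ or $Y_0$ according to the parity of $\prec$. The parity bookkeeping in your descending chain is consistent with the stated cases, so no gap remains.
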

\begin{proof}
First, note that
 \begin{align*}
\|K - KY_\prec\| = \|K^*-Y_\prec K^*\| = \sup_{\|f\|\leq 1} \|K^*f-Y_\prec K^*f\|.
 \end{align*}
Next, observe that $K^*X_{\prec-1}$ maps into $\mathcal{Y}_\prec$ and since $Y_\prec K^*f$ is the best approximation of $K^*f$ in $\mathcal{Y}_\prec$ we must have
\begin{align*}
\sup_{\|f\|\leq 1} \|K^*f-Y_\prec K^*f\| \leq \sup_{\|f\|\leq 1} \|K^*f-K^*X_{\prec-1}f\| =  \|K^*-K^*X_{\prec-1}\|.
\end{align*}
This shows that $\|K - KY_\prec\|\leq \|K^*-K^*X_{\prec-1}\|$. Similarly, by swapping the roles of $K$ and $K^*$ we have $\|K^* - K^*X_{\prec}\|\leq \|K-KY_{\prec-1}\|$. The result then follows from induction on~$\prec$.
\end{proof}

\subsection{Error estimates for reduced spline spaces}\label{subsec:red}
In \cite{Takacs:2016,Floater:2017,Floater:2018,Sande:2019} error estimates for certain reduced spline spaces were shown. Here we prove a generalization of these results for the Ritz projections. Specifically, in \cite{Floater:2018} and \cite{Sande:2019} the spaces $\mathcal{S}_{p,\knots,0}$ and $\mathcal{S}_{p,\knots,1}$, defined by 
\begin{equation}\label{eq:Spi}
\begin{aligned}
\mathcal{S}_{p,\knots,0} &:= \{s\in \mathcal{S}_{p,\knots} :\, \partial^\alpha s(a)=\partial^\alpha s(b)=0,\ \ 0\leq \alpha\leq p,\ \ \alpha \text{ even}\}, \\
 \mathcal{S}_{p,\knots,1} &:= \{s\in \mathcal{S}_{p,\knots} :\, \partial^\alpha s(a)=\partial^\alpha s(b)=0,\ \ 0\leq \alpha\leq p,\ \ \alpha \text{ odd}\},
\end{aligned}
\end{equation}
were studied.
We further define the related spaces $\overline{\mathcal{S}}_{p,\knots,0}$ and $\overline{\mathcal{S}}_{p,\knots,1}$ by
\begin{equation}\label{eq:Spi-tilde}
\begin{aligned}
\overline{\mathcal{S}}_{p,\knots,0} &:= \{s\in \mathcal{S}_{p,\knots} :\, \partial^\alpha s(a)=\partial^\alpha s(b)=0,\ \ 0\leq \alpha< p,\ \ \alpha \text{ even}\},\\
 \overline{\mathcal{S}}_{p,\knots,1} &:= 
\{s\in \mathcal{S}_{p,\knots} :\, \partial^\alpha s(a)=\partial^\alpha s(b)=0,\ \ 0\leq \alpha< p,\ \ \alpha \text{ odd}\}.
\end{aligned}
\end{equation}
For uniform knot sequences, the spaces $\overline{\mathcal{S}}_{p,\knots,1}$ are exactly the reduced spline spaces investigated in \cite[Definition~5.1]{Takacs:2016}.
Observe that $\mathcal{S}_{p,\knots,0} \subseteq \overline{\mathcal{S}}_{p,\knots,0}$ where equality holds for $p$ odd and $\mathcal{S}_{p,\knots,1} \subseteq \overline{\mathcal{S}}_{p,\knots,1}$ where equality holds for $p$ even. Observe further that in the case $p=0$ all the spaces in \eqref{eq:Spi} and \eqref{eq:Spi-tilde} equal the standard spline space $\mathcal{S}_{0,\knots}$ \emph{except} for $\mathcal{S}_{0,\knots,0}$.

For a specific (degree-dependent) knot sequence $\knots$ it was shown in \cite{Floater:2018} that the spline spaces in \eqref{eq:Spi} are optimal for certain $n$-width problems. Later it was shown in \cite{Sande:2019} that if $n$ is the dimension of these optimal spaces, then they converge to the space spanned by the $n$ first eigenfunctions of the Laplacian (with either Dirichlet or Neumann boundary conditions) as their degree $p$ increases. 
Convergence in the case of periodic boundary conditions was also studied in \cite{Sande:2019}.

Staying consistent with the notation in \cite{Pinkus:85,Floater:2017,Floater:2018} we define the integral operator $K_1$ by
\begin{equation*}
K_1:=(I-P_0)K,
\end{equation*}
where $P_0$ denotes the $L^2$-projector onto $\mathcal{P}_0$, and $K$ is the integral operator in \eqref{eq:Kint}. One can verify that if $u=K_1f$ then $\partial u=f$ and $u\perp 1$. Moreover, since $K_1^*=K^*(I-P_0)$ it follows that if $u=K_1^*f$ then $\partial u=(P_0-I)f$ and $u(a)=u(b)=0$. Using these properties it was shown in \cite{Floater:2018} that
\begin{equation}\label{eq:Spi-orth}
\begin{aligned}
\mathcal{S}_{p,\knots,0}&=K_1^*(\mathcal{S}_{p-1,\knots,1}),
\\
\mathcal{S}_{p,\knots,1} &= \mathcal{P}_0\oplus K_1(\mathcal{S}_{p-1,\knots,0}),
\end{aligned}
\end{equation}
for all $p\geq 1$, since the derivative of a spline is a spline  of one degree lower on the same knot sequence. For the spline spaces in \eqref{eq:Spi-tilde} we deduce by the same argument that
\begin{equation}\label{eq:Spi-tilde-orth}
\begin{aligned}
\overline{\mathcal{S}}_{p,\knots,0}&=K_1^*(\overline{\mathcal{S}}_{p-1,\knots,1}),
\\
\overline{\mathcal{S}}_{p,\knots,1} &= \mathcal{P}_0\oplus K_1(\overline{\mathcal{S}}_{p-1,\knots,0}),
\end{aligned}
\end{equation}
for all $p\geq 1$.

Let  $S_{p,i}:L^2(a,b)\to  \mathcal{S}_{p,\knots,i}$, $i=0,1$, denote the $L^2$-projector.
Analogously to \eqref{def:Ritz} we define, for $p\geq 1$, the Ritz projector $R_{p,0}:H^1_0(a,b)\to  \mathcal{S}_{p,\knots,0}$ by
\begin{equation}\label{def:RitzS0}
(\partial R_{p,0} u,\partial v) = (\partial  u, \partial v), \quad \forall v\in\mathcal{S}_{p,\knots,0},
\end{equation}
and the Ritz projector $R_{p,1}:H^1(a,b)\to  \mathcal{S}_{p,\knots,1}$ by
\begin{equation}\label{def:RitzS1}
\begin{aligned}
(\partial R_{p,1} u,\partial v) &= (\partial  u, \partial v), \quad \forall v\in\mathcal{S}_{p,\knots,1},
\\
(R_{p,1} u,1)&=(u,1).
\end{aligned}
\end{equation}
Using the above definitions, together with \eqref{eq:Spi-orth}, we find that $R_{p,0}=K_1^*S_{p-1,1}$ and  $R_{p,1}=P_0+K_1S_{p-1,0}$.

Lastly, we define the quantity $\widehat{h}$ by
\begin{align*}
\widehat h:=\max\{2(\xi_{1}-\xi_0),(\xi_{2}-\xi_1),(\xi_{3}-\xi_2),\ldots, (\xi_{\nknots}-\xi_{\nknots-1}),2(\xi_{\nknots+1}-\xi_{\nknots})\},
\end{align*}
where we observe that the difference between $\widehat h$ and the $h$ in \eqref{eq:hmax} is that the length of the first and the last knot interval is scaled by a factor of $2$.
To prove the error estimates for our Ritz projections onto the sequences of spaces in \eqref{eq:Spi} we make use of the next lemma.
\begin{lemma}\label{lem:S0}
For any $u\in H^1(a,b)$ we have
\begin{equation*}
\|u-S_{0,1}u\|\leq \frac{h}{\pi}\|\partial u\|,
\end{equation*}
and for any $v\in H^1_0(a,b)$ we have
\begin{equation*}
\|v-S_{0,0}v\|\leq \frac{\widehat h}{\pi}\|\partial v\|.
\end{equation*}
\end{lemma}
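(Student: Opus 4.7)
The plan is to exploit the fact that in the case $p=0$ all the projections can be computed locally on each knot interval. Since $\mathcal{S}_{0,\knots,1} = \mathcal{S}_{0,\knots}$, the projector $S_{0,1}$ sends $u$ to the piecewise constant function whose value on $I_j = [\xi_j, \xi_{j+1})$ is the mean $\bar u_j$ of $u$ over $I_j$. On the other hand, the constraints $s(a) = s(b) = 0$ that define $\mathcal{S}_{0,\knots,0}$ force any piecewise constant element to vanish on $I_0$ and $I_\nknots$, so $S_{0,0} v$ equals the mean of $v$ on each interior interval $I_j$, $j=1,\ldots,\nknots-1$, and equals $0$ on $I_0$ and $I_\nknots$.

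For the first estimate, I would apply on each interval the sharp Wirtinger--Poincar\'e inequality
\begin{equation*}
\|u - \bar u_j\|_{L^2(I_j)} \leq \frac{\xi_{j+1}-\xi_j}{\pi}\,\|\partial u\|_{L^2(I_j)},
\end{equation*}
(whose sharp constant comes from the smallest positive eigenvalue $(\pi/(\xi_{j+1}-\xi_j))^2$ of $-\partial^2$ on $I_j$ with Neumann boundary conditions), square, sum over $j$, and bound each $\xi_{j+1}-\xi_j$ by $h$.

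For the second estimate, on interior intervals $I_j$, $j=1,\ldots,\nknots-1$, I would again use the Wirtinger--Poincar\'e inequality with constant $(\xi_{j+1}-\xi_j)/\pi$. On $I_0$ and $I_\nknots$, the $L^2$-projection is $0$, so I would invoke the sharp Friedrichs-type inequality
\begin{equation*}
\|v\|_{L^2(I_0)} \leq \frac{2(\xi_1-\xi_0)}{\pi}\,\|\partial v\|_{L^2(I_0)},
\end{equation*}
valid since $v(a)=0$ in the trace sense. The sharp constant here is the reciprocal square root of the smallest eigenvalue $(\pi/(2(\xi_1-\xi_0)))^2$ of $-\partial^2$ on $I_0$ with Dirichlet boundary at $\xi_0$ and Neumann boundary at $\xi_1$. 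An analogous bound holds on $I_\nknots$. Squaring, summing and bounding every occurring factor by $\widehat h$ (recall that $\widehat h$ already incorporates the doubling of the first and last interval lengths) yields the claim.

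The only real subtlety is identifying the correct sharp constants in the two Poincar\'e-type inequalities used on the boundary intervals versus the interior intervals. The factor of $2$ in the definition of $\widehat h$ is precisely what is needed to absorb the extra factor $2$ appearing in the sharp constant of the half-Dirichlet Friedrichs inequality compared to the full-Neumann Wirtinger inequality, so that a uniform constant $\widehat h/\pi$ can be pulled out of the sum.
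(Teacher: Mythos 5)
Your proof is correct and takes essentially the same route as the paper, whose proof simply states that the results follow from the Poincar\'e inequality and defers the details to \cite{Sande:2019}: you apply the sharp Neumann--Poincar\'e constant $(\xi_{j+1}-\xi_j)/\pi$ elementwise for $S_{0,1}$, and the sharp mixed Dirichlet--Neumann constant $2(\xi_{j+1}-\xi_j)/\pi$ on the two boundary intervals for $S_{0,0}$, which is exactly what the factor of $2$ in the definition of $\widehat h$ is designed to absorb. Nothing is missing.
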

\begin{proof}
These results follow from the Poincar\'e inequality. We refer the reader to \cite[Theorem~1.1 and Lemma~8.1]{Sande:2019} for the details.
\end{proof}
Using the above lemma together with Lemma~\ref{lem:gen} we obtain the desired error estimates.
\begin{theorem}\label{thm:Spi}
Let $p\geq 0$ be given. Then, for any $u\in H^1(a,b)$ we have
\begin{equation*}
\begin{aligned}
\|u-R_{p,1}u\|&\leq \frac{\widehat h}{\pi}\|\partial u\|, &&\pnew \text{ odd},
\\
\|u-R_{p,1}u\|&\leq \frac{h}{\pi}\|\partial u\|, &&\pnew \text{ even},
\end{aligned}
\end{equation*}
and for any $v\in H^1_0(a,b)$ we have
\begin{equation*}
\begin{aligned}
\|v-R_{p,0}v\|&\leq \frac{h}{\pi}\|\partial v\|, &&\pnew \text{ odd},
\\
\|v-R_{p,0}v\|&\leq \frac{\widehat h}{\pi}\|\partial v\|, &&\pnew \text{ even}.
\end{aligned}
\end{equation*}
\end{theorem}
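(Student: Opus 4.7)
The plan is to reduce each error bound to an operator norm of the form $\|K_1 - K_1 S_{p-1,0}\|$ or $\|K_1^* - K_1^* S_{p-1,1}\|$, and then apply Lemma~\ref{lem:gen} to the integral operator $K_1$ with a hierarchy of spaces tailored to the reduced spline families.

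First I would establish the error identities. For $u \in H^1(a,b)$, the relation $K\partial u = u - u(a)$ gives $u = P_0 u + K_1 \partial u$; combined with $R_{p,1} u = P_0 u + K_1 S_{p-1,0}\partial u$, this yields $u - R_{p,1} u = K_1(I - S_{p-1,0})\partial u$, hence
\[
\|u - R_{p,1} u\| \leq \|K_1 - K_1 S_{p-1,0}\|\,\|\partial u\|.
\]
For $v \in H^1_0(a,b)$ one has $\partial v \perp 1$ and $v = -K_1^* \partial v$, so with $R_{p,0} v = -K_1^* S_{p-1,1}\partial v$ one obtains
\[
\|v - R_{p,0} v\| \leq \|K_1^* - K_1^* S_{p-1,1}\|\,\|\partial v\|.
\]
Since $K_1^*$ annihilates $\mathcal{P}_0$, we have $K_1^* S_{p-1,1} = K_1^* \widetilde{S}_{p-1,1}$, where $\widetilde{S}_{p-1,1}$ denotes the $L^2$-projector onto $\widetilde{\mathcal{S}}_{p-1,1} := K_1(\mathcal{S}_{p-2,\knots,0})$.

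Next, I would apply Lemma~\ref{lem:gen} to the operator $K_1$ with base spaces $\mathcal{Y}_0 := \mathcal{S}_{0,\knots,0}$ and $\mathcal{X}_0 := \mathcal{S}_{0,\knots}$. Using \eqref{eq:Spi-orth} and the fact that $K_1^*$ kills $\mathcal{P}_0$, induction gives $\mathcal{Y}_\prec = \mathcal{S}_{\prec,\knots,0}$ for all $\prec \geq 0$ and $\mathcal{X}_\prec = \widetilde{\mathcal{S}}_{\prec,\knots,1}$ for all $\prec \geq 1$. Lemma~\ref{lem:gen} then bounds $\|K_1 - K_1 Y_\prec\|$ by $\|(I - S_0) K_1\|$ when $\prec$ is odd and by $\|(I - S_{0,0}) K_1^*\|$ when $\prec$ is even; a symmetric argument (swapping the roles of $K_1$ and $K_1^*$ in the proof of the lemma) bounds $\|K_1^* - K_1^* X_\prec\|$ by the same two quantities but with the opposite parity pattern. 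The two base quantities are evaluated via Lemma~\ref{lem:S0}: since $\mathcal{P}_0 \subset \mathcal{S}_{0,\knots}$ one has $(I - S_0)(I - P_0) = I - S_0$, so $(I - S_0) K_1 = (I - S_0) K$, and applying Lemma~\ref{lem:S0} to $Kf \in H^1(a,b)$ (using $\|\partial K f\| = \|f\|$) gives $\|(I - S_0) K\| \leq h/\pi$. Similarly, $K_1^* f \in H^1_0(a,b)$ with $\partial K_1^* f = -(I - P_0) f$, so $\|\partial K_1^* f\| \leq \|f\|$, and the second part of Lemma~\ref{lem:S0} yields $\|(I - S_{0,0}) K_1^*\| \leq \widehat{h}/\pi$.

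Setting $\prec = p - 1$ in each chain and tracking the parity produces the four stated inequalities; the boundary case $p = 0$ follows from Lemma~\ref{lem:S0} directly, interpreting $R_{0,i} := S_{0,i}$. The main obstacle is the parity bookkeeping: the parity shift caused by passing from $S_{p-1,1}$ to $\widetilde{S}_{p-1,1}$ is precisely what flips the odd/even pattern between the $R_{p,1}$ and $R_{p,0}$ chains, and this is what produces the $h$-versus-$\widehat{h}$ alternation claimed by the theorem.
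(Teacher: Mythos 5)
Your proof is correct and follows essentially the same route as the paper's: both reduce the two errors to the operator norms $\|K_1-K_1S_{p-1,0}\|$ and $\|K_1^*-K_1^*S_{p-1,1}\|$, identify the reduced spline spaces with the $\mathcal{X}_\prec,\mathcal{Y}_\prec$ hierarchy of Lemma~\ref{lem:gen} applied to $K_1$ (your choice $\mathcal{X}_0=\mathcal{S}_{0,\knots}$ versus the paper's mean-zero subspace of it is immaterial, since $K_1$ and $K_1^*$ see only the $\mathcal{P}_0^\perp$ part), and evaluate the two base quantities via Lemma~\ref{lem:S0}. The parity bookkeeping matches the statement, and your explicit treatment of $p=0$ is a small bonus over the paper's proof.
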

\begin{proof}
Define the spaces $\mathcal{S}_{p}$ by
\begin{equation*}
\mathcal{S}_{p}:=\{s\in  \mathcal{S}_{p,\knots,1} : s\perp 1\}.
\end{equation*}
Using \eqref{eq:Spi} we find that if $K_1$ plays the role of the generic integral operator $K$ in Section~\ref{subsec:gen}, then the spaces $\mathcal{S}_{p}$ are examples of the $\mathcal{X}_\prec$ in \eqref{eq:Xs} and the spaces $\mathcal{S}_{p,\knots,0}$ are examples of the  $\mathcal{Y}_\prec$ in \eqref{eq:Xs} for $p=\prec$. 

Moreover, using the definition of $K_1$ we observe that $H^1(a,b)=\mathcal{P}_0\oplus K_1(L^2(a,b))$. Thus, any function $u\in H^1(a,b)$ can be decomposed as $u=c+K_1f$ for $c\in\mathcal{P}_0$ and $f\in L^2(a,b)$. Using Lemma~\ref{lem:S0} we then find that 
\begin{equation*}
\|Kf-S_{0,1}Kf\|=\|u-S_{0,1}u\|\leq \frac{h}{\pi}\|\partial u\|= \frac{h}{\pi}\|f\|,
\end{equation*}
since $\mathcal{P}_0\subset  \mathcal{S}_{p,\knots,1}$, and so $\|K-S_{0,1}K\|\leq h/\pi$.
Furthermore, it was shown in \cite{Floater:2018} that $H^1_0(a,b)=K_1^*(L^2(a,b))$ and so any function $v\in H^1_0(a,b)$ can be written as $v=K_1^*g$ for $g\in L^2(a,b)$. Again, using Lemma~\ref{lem:S0}, we find that 
\begin{equation*}
\|K^*g-S_{0,0}K^*g\|=\|v-S_{0,0}v\|\leq \frac{\widehat h}{\pi}\|\partial v\|= \frac{\widehat h}{\pi}\|g\|,
\end{equation*}
and $\|K^*-S_{0,0}K^*\|\leq \widehat{h}/\pi$. The result then follows from Lemma~\ref{lem:gen} since $R_{p,0}=K_1^*S_{p-1,1}$ and  $R_{p,1}=P_0+K_1S_{p-1,0}$.
\end{proof}

Let $\overline{S}_{p,i}:L^2(a,b)\to \overline{\mathcal{S}}_{p,\knots,i}$, $i=0,1$, denote the $L^2$-projector.
We then define the Ritz projector $\overline{R}_{p,0}:H^1(a,b)\to \overline{\mathcal{S}}_{p,\knots,0}$ in a completely analogous way to \eqref{def:RitzS0} and $\overline{R}_{p,1}:H^1(a,b)\to \overline{\mathcal{S}}_{p,\knots,1}$ in a completely analogous way to \eqref{def:RitzS1}.
As before, using \eqref{eq:Spi-tilde-orth} we find that $\overline{R}_{p,0}=K_1^*\overline{S}_{p-1,1}$ and  $\overline{R}_{p,1}=P_0+K_1\overline{S}_{p-1,0}$.

\begin{theorem}\label{thm:Spi-tilde}
Let $p\geq0$ be given. Then, for any $u\in H^1(a,b)$ we have
\begin{equation*}
\|u-\overline{R}_{p,1}u\|\leq \frac{h}{\pi}\|\partial u\|,
\end{equation*}
and for any $v\in H^1_0(a,b)$ we have
\begin{equation*}
\|v-\overline{R}_{p,0}v\|\leq \frac{h}{\pi}\|\partial v\|,
\end{equation*}
\end{theorem}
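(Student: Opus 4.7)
The plan is to mirror the proof of Theorem~\ref{thm:Spi} almost verbatim, exploiting a simplification that is specific to the barred reduced spaces and that removes the Dirichlet Poincar\'e constant $\widehat h/\pi$ from the final bound.

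The key observation is that for $p=0$ the condition ``$0\le \alpha<p$'' in \eqref{eq:Spi-tilde} is vacuous, so
\begin{equation*}
\overline{\mathcal{S}}_{0,\knots,0}=\overline{\mathcal{S}}_{0,\knots,1}=\mathcal{S}_{0,\knots},
\end{equation*}
and hence both $\overline{S}_{0,0}$ and $\overline{S}_{0,1}$ coincide with the standard $L^2$-projector onto the space of piecewise constants. This is the feature that distinguishes $\overline{\mathcal{S}}_{p,\knots,i}$ from $\mathcal{S}_{p,\knots,i}$: in the proof of Theorem~\ref{thm:Spi}, $\mathcal{S}_{0,\knots,0}$ is a proper Dirichlet subspace, forcing the appearance of $\widehat h/\pi$.

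Next, I cast things in the framework of Section~\ref{subsec:gen} with $K:=K_1$, $\mathcal{X}_\prec:=\{s\in\overline{\mathcal{S}}_{\prec,\knots,1}:s\perp 1\}$, and $\mathcal{Y}_\prec:=\overline{\mathcal{S}}_{\prec,\knots,0}$. The identities in \eqref{eq:Spi-tilde-orth} show these families satisfy \eqref{eq:Xs}; moreover, exactly the same derivation used for \eqref{eq:Spi-orth} in the proof of Theorem~\ref{thm:Spi} yields $\overline{R}_{p,0}=K_1^*\overline{S}_{p-1,1}$ and $\overline{R}_{p,1}=P_0+K_1\overline{S}_{p-1,0}$. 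I then bound both base cases of Lemma~\ref{lem:gen}. For $f\in L^2$, $K_1 f\in H^1$ with $\partial K_1 f=f$, so the first inequality of Lemma~\ref{lem:S0} gives $\|K_1-\overline{S}_{0,1}K_1\|\le h/\pi$. For $g\in L^2$, $K_1^*g\in H^1_0\subset H^1$ with $\partial K_1^*g=(P_0-I)g$; crucially, $\overline{S}_{0,0}$ is the projector onto the \emph{full} $\mathcal{S}_{0,\knots}$, so the same (first) inequality of Lemma~\ref{lem:S0} applies and yields
\begin{equation*}
\|K_1^*g-\overline{S}_{0,0}K_1^*g\|\le \frac{h}{\pi}\|(I-P_0)g\|\le \frac{h}{\pi}\|g\|,
\end{equation*}
i.e.\ $\|K_1^*-\overline{S}_{0,0}K_1^*\|\le h/\pi$.

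Finally, I apply Lemma~\ref{lem:gen} (and its dual form obtained by swapping the roles of $K$ and $K^*$) to conclude $\|K_1-K_1\overline{S}_{p-1,0}\|\le h/\pi$ and $\|K_1^*-K_1^*\overline{S}_{p-1,1}\|\le h/\pi$ for all $p\ge 1$. Writing $u=P_0 u+K_1\partial u$ for $u\in H^1(a,b)$ gives $u-\overline{R}_{p,1}u=(K_1-K_1\overline{S}_{p-1,0})\partial u$, and similarly $v=-K_1^*\partial v$ for $v\in H^1_0(a,b)$ gives $v-\overline{R}_{p,0}v=-(K_1^*-K_1^*\overline{S}_{p-1,1})\partial v$ (using $K_1^*P_0=0$ to absorb the $P_0$ term). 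Taking norms concludes the proof. The main conceptual obstacle, already highlighted above, is recognizing that the Dirichlet-type inequality used in Theorem~\ref{thm:Spi} is not needed here precisely because $\overline{S}_{0,0}$ is unconstrained; everything else is a direct transcription of the earlier argument.
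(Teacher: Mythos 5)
Your proof is correct and follows essentially the same route as the paper, which simply refers back to the proof of Theorem~\ref{thm:Spi} and notes that $\overline{\mathcal{S}}_{0,\knots,0}=\mathcal{S}_{0,\knots}$ yields $\|K^*-\overline{S}_{0,0}K^*\|\leq h/\pi$ in place of $\widehat h/\pi$. You have correctly identified that single change and spelled out the remaining steps (the identities $\overline{R}_{p,0}=K_1^*\overline{S}_{p-1,1}$, $\overline{R}_{p,1}=P_0+K_1\overline{S}_{p-1,0}$, and Lemma~\ref{lem:gen}) in more detail than the paper does.
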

\begin{proof}
This result follows from a similar argument as in the proof of Theorem~\ref{thm:Spi}. The main change being that in the case $p=0$ we have
$
\overline{\mathcal{S}}_{0,\knots,0}=\mathcal{S}_{0,\knots},
$
and so $\|K^*-\overline{S}_{0,0}K^*\|\leq {h}/\pi$.
\end{proof}

\begin{remark}\label{rmk:inverse}
The reduced spline spaces defined in \eqref{eq:Spi} and \eqref{eq:Spi-tilde} all satisfy the boundary conditions stated in \cite[Theorem~9.1]{Sande:2019}. Hence, any element $s$ in such spaces satisfies the following inverse inequality:
\begin{equation*}
\|s'\|\leq \frac{2\sqrt{3}}{\hmin}\|s\|, 
\end{equation*}
where $\hmin$ is the minimum knot distance.
\end{remark}

\begin{remark}
As the error estimates in Theorems~\ref{thm:Spi} and~\ref{thm:Spi-tilde} are complemented with the inverse inequality in Remark~\ref{rmk:inverse}, the reduced spline spaces defined in \eqref{eq:Spi} and \eqref{eq:Spi-tilde} can be used to design fast iterative (multigrid) solvers for linear systems arising from spline discretization methods~\cite{Hofreither:2017,Sogn:2018}.
\end{remark}

\section{Tensor-product spline spaces}\label{sec:tensor}
In this section we describe how to extend our error estimates to the case of tensor-product spline spaces. 
We start by introducing some notation. 
Consider the $d$-dimensional domain $\Ddom:=(a_1,b_1)\times(a_2,b_2)\times \cdots \times (a_d,b_d)$, and let $\|\cdot\|_{\Ddom}$ denote the $L^2(\Ddom)$-norm.
Moreover, we deal with the standard Sobolev spaces on $\Ddom$ defined by
\begin{equation*}
H^r(\Ddom):= \{u\in L^2(\Ddom) : \partial_1^{\alpha_1}\cdots\partial_d^{\alpha_d} u \in L^2(\Ddom),\, 1\leq\alpha_1+\cdots+\alpha_d\leq r \}.
\end{equation*}

For $i=1,\ldots,d$, let $\mathcal{Z}_{\prec_i}$ be a finite dimensional subspace of $L^2(a_i,b_i)$ as in \eqref{eq:Xsimpl} with $K$ as in \eqref{eq:Kint}, and define the tensor-product space $\mathcal{Z}_{\bf \prec}:=\mathcal{Z}_{\prec_1}\otimes \mathcal{Z}_{\prec_2}\otimes\cdots\otimes \mathcal{Z}_{\prec_d}$.
We only investigate projectors onto $\mathcal{Z}_{\bf \prec}$ of the form $\Pi:=\Pi_1\otimes\Pi_2\otimes\cdots\otimes\Pi_d$. 
To simplify notation, we use the following convention: when applying the univariate operator $\Pi_i$ to a $d$-variate function $u$, we mean that $\Pi_i$ acts on the $i$-th variable of $u$ while the others are considered as parameters. In this perspective, we have
$\Pi=\Pi_1\circ\Pi_2\circ\cdots\circ\Pi_d$.

We first study error estimates for the $L^2(\Ddom)$-projection onto $\mathcal{Z}_{\bf \prec}$, denoted by 
$Z_{\bf \prec}:=Z_{\prec_1}\otimes Z_{\prec_2}\otimes\cdots\otimes Z_{\prec_d}$.
The following result can be concluded from the univariate error estimates using a standard argument (see, e.g., \cite{Schwab:99,Buffa:11,Takacs:2016,Sande:2019,Bressan:2019}), but for the sake of completeness we include the argument here.
\begin{theorem}\label{thm:tensorL2}
For any $u\in L^2(\Ddom)$ we have
\begin{align*}
\|u-Z_{\bf \prec}u\|_{\Ddom}\leq \sum_{i=1}^d\|u-Z_{\prec_i}u\|_{\Ddom},
\end{align*}
and consequently, if $u\in H^r(\Ddom)$ and $\mathcal{P}_{r-1}\subseteq\mathcal{Z}_{\prec_i}$ for all $i=1,\ldots,d$, we have
\begin{align}\label{ineq:tensorL2}
\|u-Z_{\bf \prec}u\|_{\Ddom} \leq \sum_{i=1}^d \Cgen_{\prec_i,r}\|\partial_i^ru\|_{\Ddom}.
\end{align}
\end{theorem}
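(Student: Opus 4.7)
The plan is to follow the standard tensor-product argument based on a telescoping decomposition of $I - Z_{\bf \prec}$ combined with the $L^2$-contractivity of each univariate factor. Since $Z_{\prec_i}$ and $Z_{\prec_j}$ act on different variables (with the remaining ones treated as parameters), they commute, and $Z_{\bf \prec} = Z_{\prec_1}\circ Z_{\prec_2}\circ\cdots\circ Z_{\prec_d}$.

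First I would establish the identity
\begin{equation*}
 I - Z_{\prec_1}\circ Z_{\prec_2}\circ\cdots\circ Z_{\prec_d}
 = \sum_{i=1}^d Z_{\prec_1}\circ\cdots\circ Z_{\prec_{i-1}}\circ (I-Z_{\prec_i}),
\end{equation*}
which is immediate by induction on $d$ (or by writing the telescoping sum $\sum_i (Z_{\prec_1}\cdots Z_{\prec_{i-1}} - Z_{\prec_1}\cdots Z_{\prec_i})$). Applying the triangle inequality in $L^2(\Ddom)$ gives
\begin{equation*}
 \|u-Z_{\bf \prec}u\|_{\Ddom}\leq \sum_{i=1}^d \|Z_{\prec_1}\circ\cdots\circ Z_{\prec_{i-1}}(u-Z_{\prec_i}u)\|_{\Ddom}.
\end{equation*}

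The next step is to note that each univariate $L^2(a_j,b_j)$-projector $Z_{\prec_j}$ is a contraction on $L^2(\Ddom)$ when viewed as acting in the $j$-th variable with the remaining variables as parameters: by Tonelli and univariate contractivity,
\begin{equation*}
 \|Z_{\prec_j} v\|_{\Ddom}^2 = \int \|Z_{\prec_j} v(\cdot,x_{-j})\|_{L^2(a_j,b_j)}^2\,dx_{-j} \leq \int \|v(\cdot,x_{-j})\|_{L^2(a_j,b_j)}^2\,dx_{-j} = \|v\|_{\Ddom}^2.
\end{equation*}
Iterating this bound in the inequality above yields the first claim
\begin{equation*}
 \|u-Z_{\bf \prec}u\|_{\Ddom}\leq \sum_{i=1}^d \|u-Z_{\prec_i}u\|_{\Ddom}.
\end{equation*}

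Finally, for the second claim, I would bound $\|u - Z_{\prec_i} u\|_{\Ddom}$ slice by slice. Since $u\in H^r(\Ddom)$, Tonelli guarantees that for almost every $x_{-i}$ the function $x_i\mapsto u(x_i,x_{-i})$ belongs to $H^r(a_i,b_i)$ with $\partial_i^r u(\cdot,x_{-i})\in L^2(a_i,b_i)$. The hypothesis $\mathcal{P}_{r-1}\subseteq \mathcal{Z}_{\prec_i}$ allows invoking Theorem~\ref{thm:L2} in the $i$-th variable, giving pointwise in $x_{-i}$
\begin{equation*}
 \|u(\cdot,x_{-i}) - Z_{\prec_i} u(\cdot,x_{-i})\|_{L^2(a_i,b_i)} \leq \Cgen_{\prec_i,r} \|\partial_i^r u(\cdot,x_{-i})\|_{L^2(a_i,b_i)}.
\end{equation*}
Squaring and integrating over $x_{-i}$ produces $\|u-Z_{\prec_i}u\|_{\Ddom}\leq \Cgen_{\prec_i,r}\|\partial_i^r u\|_{\Ddom}$, and substituting in the previous bound gives \eqref{ineq:tensorL2}. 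There is no real obstacle here; the only point requiring care is to justify the commutativity of the $Z_{\prec_j}$ and the Fubini/Tonelli slicing argument, both of which are routine given that the projectors act on disjoint variables.
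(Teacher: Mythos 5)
Your proof is correct and follows essentially the same route as the paper: the telescoping decomposition you write for general $d$ is exactly the triangle-inequality splitting the paper uses for $d=2$ (with the remark that the extension to arbitrary $d$ is straightforward), combined with the fact that each univariate $L^2$-projector has operator norm $1$ on $L^2(\Ddom)$, and then an application of Theorem~\ref{thm:L2} in each variable. Your additional care about commutativity and the Fubini/Tonelli slicing is a welcome but routine elaboration of what the paper leaves implicit.
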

\begin{proof}
We only consider the case $d=2$. The generalization to arbitrary $d$ is straightforward.
From the triangle inequality we obtain
\begin{align*}
\|u-Z_{\prec_1}\otimes Z_{\prec_2}u\|_{\Ddom}&\leq \|u-Z_{\prec_1}u\|_{\Ddom}+\|Z_{\prec_1}u-Z_{\prec_1}\otimes Z_{\prec_2}u\|_{\Ddom}
\\
&\leq \|u-Z_{\prec_1}u\|_{\Ddom}+\|Z_{\prec_1}\|\,\|u- Z_{\prec_2}u\|_{\Ddom}
\\
&\leq \|u-Z_{\prec_1}u\|_{\Ddom}+\|u-Z_{\prec_2}u\|_{\Ddom},
\end{align*}
since the $L^2(\Ddom)$-operator norm of $Z_{\prec_1}$ is equal to $1$. Applying Theorem~\ref{thm:L2} we then obtain \eqref{ineq:tensorL2}.
\end{proof}
\begin{remark}\label{rem:tensorL2}
For simplicity let $\Ddom=(0,1)^d$.
Note that \eqref{ineq:tensorL2} actually holds for all functions $u$ in the larger Sobolev space
\begin{equation*}
\bigcap_{i=1}^d L^2(0,1)^{i-1}\otimes H^{r}(0,1)\otimes L^2(0,1)^{d-i} \supseteq H^r(\Ddom).
\end{equation*}
We make use of a similar Sobolev space in Section~\ref{subsec:bent}.
\end{remark}

For tensor-product spline spaces of arbitrary smoothness, let $S^{\bf k}_{\bf p}:=S^{k_1}_{p_1}\otimes\cdots\otimes S^{k_d}_{p_d}$ denote the $L^2(\Ddom)$-projector onto 
$\mathcal{S}^{\bf k}_{{\bf p},\dknots} :=\mathcal{S}^{k_1}_{p_1,\knots_1}\otimes\cdots\otimes \mathcal{S}^{k_d}_{p_d,\knots_d}$.
For maximally smooth spline spaces, let
$S_{\bf p}:=S_{p_1}\otimes\cdots\otimes S_{p_d}$ denote the $L^2(\Ddom)$-projector onto 
$\mathcal{S}_{{\bf p},\dknots} :=\mathcal{S}_{p_1,\knots_1}\otimes\cdots\otimes \mathcal{S}_{p_d,\knots_d}$.
Error estimates for these spaces can be immediately obtained by replacing $\Cgen_{\prec_i,r}$ in Theorem~\ref{thm:tensorL2} with the constants derived in Corollaries~\ref{cor:low-order} and~\ref{cor:smoothL2}. Let $h_i$ denote the maximal knot distance in $\knots_i$ for $i=1,\ldots,d$.
\begin{corollary}\label{cor:tensorL2}
For any $u\in H^r(\Ddom)$ we have
\begin{align*}
\|u-S^{\bf k}_{\bf p}u\|_{\Ddom} &\leq \sum_{i=1}^d C_{h_i,p_i,k_i,r}\|\partial_i^ru\|_{\Ddom},
\end{align*}
and
\begin{align*}
\|u-S_{\bf p}u\|_{\Ddom} &\leq \sum_{i=1}^d C_{h_i,p_i,r}\|\partial_i^ru\|_{\Ddom},
\\
\|u-S_{\bf p}u\|_{\Ddom} &\leq \sum_{i=1}^d C_{h_i,p_i,1}C_{h_i,p_i-1,1}\cdots C_{h_i,p_i-r+1,1}\|\partial_i^ru\|_{\Ddom},
\end{align*}
for all $p_i\geq r-1$.
\end{corollary}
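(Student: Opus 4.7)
The plan is to deduce the corollary directly from Theorem~\ref{thm:tensorL2} by specializing the sequence of spaces $\mathcal{Z}_{\prec_i}$ to the relevant univariate spline spaces, and then inserting the sharper univariate constants already established in Corollaries~\ref{cor:low-order} and~\ref{cor:smoothL2}. The only prerequisite to check is the containment hypothesis $\mathcal{P}_{r-1}\subseteq \mathcal{Z}_{\prec_i}$, which is built into the assumption $p_i\geq r-1$ in each of the three cases: for any knot sequence $\knots_i$ one has $\mathcal{P}_{p_i}\subseteq \mathcal{S}^{k_i}_{p_i,\knots_i}$, and in particular $\mathcal{P}_{r-1}\subseteq \mathcal{S}^{k_i}_{p_i,\knots_i}$ whenever $p_i\geq r-1$.

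For the first inequality, I would set $\mathcal{Z}_{\prec_i}:=\mathcal{S}^{k_i}_{p_i,\knots_i}$ so that the tensor-product projector $Z_{\bf \prec}$ coincides with $S^{\bf k}_{\bf p}$. Theorem~\ref{thm:tensorL2} then yields
\begin{equation*}
\|u-S^{\bf k}_{\bf p}u\|_{\Ddom}\leq \sum_{i=1}^d \Cgen_{\prec_i,r}\|\partial_i^r u\|_{\Ddom},
\end{equation*}
and the univariate bound $\Cgen_{\prec_i,r}\leq C_{h_i,p_i,k_i,r}$, observed just after Corollary~\ref{cor:low-order} via Remark~\ref{rem:Andrea}, gives the first claim.

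For the second and third inequalities, I would specialize further to $\mathcal{Z}_{\prec_i}:=\mathcal{S}_{p_i,\knots_i}$ (the maximal smoothness case $k_i=p_i-1$), so that $Z_{\bf \prec}=S_{\bf p}$. Once again Theorem~\ref{thm:tensorL2} reduces the task to bounding the univariate constants $\Cgen_{\prec_i,r}$, and the two inequalities \eqref{ineq:smoothL2-1} and \eqref{ineq:smoothL2-2} of Corollary~\ref{cor:smoothL2} deliver, respectively,
\begin{equation*}
\Cgen_{\prec_i,r}\leq C_{h_i,p_i,r},\qquad \Cgen_{\prec_i,r}\leq C_{h_i,p_i,1}\,C_{h_i,p_i-1,1}\cdots C_{h_i,p_i-r+1,1}.
\end{equation*}
Summing over $i$ concludes the proof.

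There is no real obstacle here: the entire argument is routine provided one is careful about the bookkeeping of hypotheses. The one mild subtlety is to confirm that Theorem~\ref{thm:tensorL2} is applicable for the maximal-smoothness case, i.e., that the equality $\mathcal{Z}_{\prec}=\mathcal{P}_0+K(\mathcal{Z}_{\prec-1})=\mathcal{P}_0+K^*(\mathcal{Z}_{\prec-1})$ from \eqref{eq:Xsimpl} holds for the sequence $\mathcal{S}_{p,\knots}$; but this has already been noted in the proof of Corollary~\ref{cor:smoothL2} (taking $\mathcal{Z}_0=\mathcal{S}_{0,\knots}$), so no extra work is needed. No quasi-uniformity or geometry assumption is invoked, and the estimates inherit the degree- and smoothness-explicit character of the underlying univariate results.
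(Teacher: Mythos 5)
Your proposal is correct and is essentially the paper's own argument: the paper states exactly that these estimates follow by replacing $\Cgen_{\prec_i,r}$ in Theorem~\ref{thm:tensorL2} with the univariate constants from Corollaries~\ref{cor:low-order} and~\ref{cor:smoothL2}, with Remark~\ref{rem:Andrea} justifying the passage from error bounds to bounds on $\Cgen_{\prec_i,r}$. Your additional checks (polynomial containment from $p_i\geq r-1$ and the validity of \eqref{eq:Xsimpl} for the maximal-smoothness sequence) are the right bookkeeping and consistent with the paper.
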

\begin{example}
Let $h:=\max\{h_1,h_2,\ldots,h_d\}$. Then, for any $u\in H^r(\Ddom)$ we have
\begin{equation*}
\|u-S_{\bf p}u\|_{\Ddom}\leq \sum_{i=1}^d \left(\frac{h_i}{\pi}\right)^{r}\|\partial_i^ru\|_{\Ddom} \leq \left(\frac{h}{\pi}\right)^{r}\sum_{i=1}^d \|\partial_i^ru\|_{\Ddom},
\end{equation*}
for all $p_i\geq r-1$. 
\end{example}

Let us now focus on error estimates for tensor products of the Ritz projection in \eqref{def:Ritz}. For simplicity of notation, we only consider the case $q=1$ and $d=2$.
Define the tensor-product Ritz projector $R_{\bf \prec}:H^1(a_1,b_1)\otimes H^1(a_2,b_2)\to \mathcal{Z}_{\prec_1}\otimes \mathcal{Z}_{\prec_2}$ by 
\begin{equation*}
R_{\bf \prec}:=R^1_{\prec_1}\otimes R^1_{\prec_2}.
\end{equation*}
Note that $H^1(a_1,b_1)\otimes H^1(a_2,b_2)$ consists of functions $u\in L^2(\Ddom)$ such that $\partial_1u\in L^2(\Ddom)$, $\partial_2u\in L^2(\Ddom)$ and $\partial_{1}\partial_2u\in L^2(\Ddom)$. We thus have $H^2(\Ddom)\subset H^1(a_1,b_1)\otimes H^1(a_2,b_2)\subset H^1(\Ddom)$.

\begin{lemma}\label{lem:tensorRitz}
Let $u\in H^1(a_1,b_1)\otimes H^1(a_2,b_2)$ be given. Then, for all $\prec_1,\prec_2\geq 1$ we have
\begin{align*}
\|u-R_{\bf \prec}u\|_{\Ddom} &\leq\|u-R_{\prec_1}^1u\|_{\Ddom}+\|u-R_{\prec_2}^1u\|_{\Ddom}
\\
&\quad +\min\{\Cgen_{\prec_1-1,1}\|\partial_{1}u-R_{\prec_2}^1\partial_1u\|_{\Ddom},\, \Cgen_{\prec_2-1,1}\|\partial_{2}u-R_{\prec_1}^1\partial_2u\|_{\Ddom}\},
\\
\|\partial_1(u-R_{\bf \prec}u)\|_{\Ddom} &\leq\|\partial_1(u-R_{\prec_1}^1u)\|_{\Ddom}+\|\partial_{1}u-R_{\prec_2}^1\partial_1u\|_{\Ddom},
\\
\|\partial_1\partial_2(u-R_{\bf \prec}u)\|_{\Ddom} &\leq\|\partial_1\partial_2u-Z_{\prec_1-1}\partial_1\partial_2u\|_{\Ddom}+\|\partial_{1}\partial_2u-Z_{\prec_2-1}\partial_1\partial_2u\|_{\Ddom}.
\end{align*}
\end{lemma}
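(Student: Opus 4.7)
The plan is to reduce everything to univariate estimates by repeatedly inserting intermediate projections and exploiting the fact that $R^1_{\prec_1}$ and $R^1_{\prec_2}$ act on different variables, hence commute with each other and with differentiation in the opposite variable. The three ingredients I will need are: the identity $\partial_i R^1_{\prec_i}=Z_{\prec_i-1}\partial_i$ applied fiberwise, the univariate error estimate $\|w-R^1_{\prec_1}w\|\leq\Cgen_{\prec_1-1,1}\|\partial w\|$ from Example~\ref{ex:stab-lower}, and the fact that every $L^2$-projector has operator norm at most one.

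For the first bound I would start from the telescoping identity
\begin{equation*}
u-R_{\bf\prec}u=(I-R^1_{\prec_2})u+(I-R^1_{\prec_1})u-(I-R^1_{\prec_1})(u-R^1_{\prec_2}u),
\end{equation*}
obtained by writing $u-R^1_{\prec_1}R^1_{\prec_2}u=(u-R^1_{\prec_2}u)+(I-R^1_{\prec_1})R^1_{\prec_2}u$ and then expanding $(I-R^1_{\prec_1})R^1_{\prec_2}u=(I-R^1_{\prec_1})u-(I-R^1_{\prec_1})(u-R^1_{\prec_2}u)$. Applying the triangle inequality in $L^2(\Ddom)$ and then the fiberwise Ritz estimate $\|(I-R^1_{\prec_1})v\|_\Ddom\leq\Cgen_{\prec_1-1,1}\|\partial_1 v\|_\Ddom$ to $v=u-R^1_{\prec_2}u$ handles the third term; the crucial observation is that $R^1_{\prec_2}$ commutes with $\partial_1$, so $\partial_1 v=\partial_1 u-R^1_{\prec_2}\partial_1 u$. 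Swapping the roles of the two variables at the very first step produces the other candidate inside the $\min$.

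For the second and third bounds the argument is cleaner. Using $\partial_1 R_{\bf\prec}=Z_{\prec_1-1}\partial_1 R^1_{\prec_2}=Z_{\prec_1-1}R^1_{\prec_2}\partial_1$, I would insert $Z_{\prec_1-1}\partial_1 u$ between $\partial_1 u$ and $\partial_1 R_{\bf\prec}u$, so that one contribution becomes $\partial_1 u-Z_{\prec_1-1}\partial_1 u=\partial_1(u-R^1_{\prec_1}u)$ and the other is $Z_{\prec_1-1}(\partial_1 u-R^1_{\prec_2}\partial_1 u)$, to which $\|Z_{\prec_1-1}\|\leq 1$ applies, yielding the second estimate. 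For the third, the same splitting applied to $\partial_1\partial_2 R_{\bf\prec}u=Z_{\prec_1-1}Z_{\prec_2-1}\partial_1\partial_2 u$, with $Z_{\prec_1-1}\partial_1\partial_2 u$ inserted as the intermediate quantity, directly delivers the claim after one more use of $\|Z_{\prec_1-1}\|\leq 1$. The main obstacle, if any, is purely bookkeeping in the first estimate: one must carefully introduce the Ritz stability estimate (not an $L^2$-norm bound, which is unavailable for $R^1_\prec$) on the correct factor and use the commutativity $R^1_{\prec_2}\partial_1=\partial_1 R^1_{\prec_2}$ so that the output is expressed in terms of $\|\partial_1 u-R^1_{\prec_2}\partial_1 u\|_\Ddom$, a quantity to which the univariate theory of Section~\ref{sec:low}--\ref{sec:max} can be applied downstream.
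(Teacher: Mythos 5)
Your proof is correct and follows essentially the same route as the paper: the paper writes $u-R_{\bf\prec}u=(u-R^1_{\prec_1}u)+R^1_{\prec_1}(u-R^1_{\prec_2}u)$ and invokes the stability bound \eqref{ineq:stab:b}, which after the triangle inequality produces exactly the three terms of your telescoping identity, and your treatment of the other two bounds (inserting $Z_{\prec_1-1}\partial_1 u$ and using $\|Z_{\prec_1-1}\|\leq 1$ together with the commutation relations) is the paper's argument with the proof of Theorem~\ref{thm:tensorL2} inlined. The one cosmetic difference is that the paper does state and use an $L^2$ stability bound for $R^1_\prec$ (namely \eqref{ineq:stab:b}), whereas you sidestep it by bounding $(I-R^1_{\prec_1})$ applied to the error instead; the two are equivalent.
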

\begin{proof}
From \eqref{ineq:stab:b} and by adding and subtracting $R_{\prec_1}^1u$ we obtain
\begin{align*}
\|u-R_{\bf \prec}u\|_{\Ddom} &\leq \|u-R_{\prec_1}^1u\|_{\Ddom} + \|R_{\prec_1}^1(u-R_{\prec_2}^1u)\|_{\Ddom}
\\
&\leq \|u-R_{\prec_1}^1u\|_{\Ddom} + \|u-R_{\prec_2}^1u\|_{\Ddom} + \Cgen_{\prec_1-1,1}\|\partial_1(u-R_{\prec_2}^1u)\|_{\Ddom},
\end{align*}
and similarly for $R_{\prec_2}^1u$. The first result now follows since $\partial_i$ commutes with $R_{\prec_j}^1$ for $i\neq j$.
Analogously, using \eqref{ineq:stab:a} we obtain
\begin{align*}
\|\partial_1(u-R_{\bf \prec}u)\|_{\Ddom} &\leq \|\partial_1(u-R_{\prec_1}^1u)\|_{\Ddom} + \|\partial_1R_{\prec_1}^1(u-R_{\prec_2}^1u)\|_{\Ddom}
\\
&\leq \|\partial_1(u-R_{\prec_1}^1u)\|_{\Ddom} + \|\partial_1(u-R_{\prec_2}^1u)\|_{\Ddom},
\end{align*}
and the second result follows. For the third result we use the commuting relation $\partial_iR_{\prec_i}^1=Z_{\prec_i-1}\partial_i$, $i=1,2$, to conclude that $\partial_1\partial_2R_{\bf \prec}=Z_{\bf \prec-1}\partial_1\partial_2$, and we apply Theorem~\ref{thm:tensorL2}.
\end{proof}

By using Theorem~\ref{thm:Ritz} we can now achieve error estimates for the tensor-product Ritz projection.
If the function $u$ is only assumed to be in $H^1(a_1,b_1)\otimes H^1(a_2,b_2)$ then one obtains the ``unbalanced'' estimate:
\begin{align*}
\|u-R_{\bf \prec}u\|_{\Ddom}\leq \Cgen_{\prec_1-1,1}\|\partial_1u\|_{\Ddom}+\Cgen_{\prec_2-1,1}\|\partial_2u\|_{\Ddom}+\Cgen_{\prec_1-1,1}\Cgen_{\prec_2-1,1}\|\partial_1\partial_2u\|_{\Ddom},
\end{align*}
for all $\prec_1,\prec_2\geq 1$. 
Indeed, the partial derivatives involved in this estimate are not of the same order.
This can be resolved by requiring higher Sobolev smoothness.
If $u\in H^2(\Ddom)$ then, for all $\prec_1,\prec_2\geq 1$ we have
\begin{align*}
\|u-R_{\bf \prec}u\|_{\Ddom}\leq (\Cgen_{\prec_1-1,1})^2\|\partial_1^2u\|_{\Ddom}
+(\Cgen_{\prec_2-1,1})^2\|\partial_2^2u\|_{\Ddom}+\Cgen_{\prec_1-1,1}\Cgen_{\prec_2-1,1}\|\partial_1\partial_2u\|_{\Ddom}.
\end{align*}
This is a special case of the following more general statement.
\begin{theorem}\label{thm:tensorRitz}
 Let $u\in H^r(\Ddom)$ for $r\geq2$ be given. If $\mathcal{P}_{r-2}\subseteq \mathcal{Z}_{\prec_1-1}\cap\mathcal{Z}_{\prec_2-1}$ for $\prec_1,\prec_2\geq1$, then
\begin{align*}
\|u-R_{\bf \prec}u\|_{\Ddom}&\leq \Cgen_{\prec_1-1,1}\Cgen_{\prec_1-1,r-1}\|\partial_1^ru\|_{\Ddom}+\Cgen_{\prec_2-1,1}\Cgen_{\prec_2-1,r-1}\|\partial_2^ru\|_{\Ddom} 
\\
&\quad +\Cgen_{\prec_1-1,1}\Cgen_{\prec_2-1,1}\min\left\{\Cgen_{\prec_2-1,r-2}\|\partial_1\partial_2^{r-1}u\|_{\Ddom}, \,\Cgen_{\prec_1-1,r-2}\|\partial_1^{r-1}\partial_2u\|_{\Ddom}\right\},
\end{align*}
and
\begin{align*}
\|\partial_1(u-R_{\bf \prec}u)\|_{\Ddom}&\leq \Cgen_{\prec_1-1,r-1}\|\partial_1^ru\|_{\Ddom}+\Cgen_{\prec_2-1,1}\Cgen_{\prec_2-1,r-2}\|\partial_1\partial_2^{r-1}u\|_{\Ddom}, 
\\
\|\partial_2(u-R_{\bf \prec}u)\|_{\Ddom}&\leq \Cgen_{\prec_1-1,1}\Cgen_{\prec_1-1,r-2}\|\partial_1^{r-1}\partial_2u\|_{\Ddom}+\Cgen_{\prec_2-1,r-1}\|\partial_2^ru\|_{\Ddom},
\\
\|\partial_1\partial_2(u-R_{\bf \prec}u)\|_{\Ddom}&\leq  \Cgen_{\prec_1-1,r-2}\|\partial_1^{r-1}\partial_2u\|_{\Ddom}+\Cgen_{\prec_2-1,r-2}\|\partial_1\partial_2^{r-1}u\|_{\Ddom}.
\end{align*}
\end{theorem}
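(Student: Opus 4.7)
The approach is to combine Lemma~\ref{lem:tensorRitz} with the univariate error estimates of Theorem~\ref{thm:Ritz} for the Ritz projection and Theorem~\ref{thm:L2} for the $L^2$-projection, applied separately in each coordinate direction. Since the operators $R^1_{\prec_i}$ and $Z_{\prec_i-1}$ act on the $i$-th variable only, the univariate estimates can be applied fiberwise and then integrated in the remaining variable; this is justified because $u\in H^r(\Ddom)$ implies, via Fubini, that the relevant one-dimensional slices lie in the appropriate Sobolev space for almost every value of the transverse variable (compare Remark~\ref{rem:tensorL2}).

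For the $L^2$-estimate, I bound each of the three summands on the right-hand side of the first inequality in Lemma~\ref{lem:tensorRitz} separately. The term $\|u-R^1_{\prec_i}u\|_\Ddom$ is bounded by applying Theorem~\ref{thm:Ritz} with $q=1$, $\ell=0$ in the $i$-th variable to obtain $\Cgen_{\prec_i-1,1}\Cgen_{\prec_i-1,r-1}\|\partial_i^ru\|_\Ddom$; the subspace condition of that theorem reads $\mathcal{P}_{r-2}\subseteq\mathcal{Z}_{\prec_i-1}$, which is exactly the hypothesis. For the mixed term $\|\partial_iu-R^1_{\prec_j}\partial_iu\|_\Ddom$ with $i\neq j$, I apply the same theorem to $\partial_iu$ (which lies in $H^{r-1}$ with respect to the $j$-th variable), giving $\Cgen_{\prec_j-1,1}\Cgen_{\prec_j-1,r-2}\|\partial_j^{r-1}\partial_iu\|_\Ddom$; here only $\mathcal{P}_{r-3}\subseteq\mathcal{Z}_{\prec_j-1}$ is needed, which is weaker than the hypothesis. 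Multiplying by the prefactor $\Cgen_{\prec_i-1,1}$ from Lemma~\ref{lem:tensorRitz} and keeping the $\min$ over $i=1,2$ produces the claimed $L^2$-bound.

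For the single-derivative bounds (i.e.\ $\ell=1$ in one variable), I apply the second inequality of Lemma~\ref{lem:tensorRitz}. The term $\|\partial_i(u-R^1_{\prec_i}u)\|_\Ddom$ is bounded by Theorem~\ref{thm:Ritz} with $q=\ell=1$, giving $\Cgen_{\prec_i-1,0}\Cgen_{\prec_i-1,r-1}\|\partial_i^ru\|_\Ddom=\Cgen_{\prec_i-1,r-1}\|\partial_i^ru\|_\Ddom$ since $\Cgen_{\prec,0}=1$; the term $\|\partial_iu-R^1_{\prec_j}\partial_iu\|_\Ddom$ is treated exactly as before. For the mixed derivative, I use the third inequality of Lemma~\ref{lem:tensorRitz} and apply Theorem~\ref{thm:L2} to $\partial_1\partial_2u\in H^{r-2}$ in each variable, again using $\mathcal{P}_{r-3}\subseteq\mathcal{Z}_{\prec_i-1}$ (implied by the hypothesis) to validate the theorem.

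The proof is conceptually routine, and there is no real analytic obstacle; all the necessary machinery has already been assembled in the preceding lemmas. The only care required is bookkeeping: keeping careful track of which variable each projector acts on, matching the orders of the partial derivatives that appear on the right-hand side, and verifying that the polynomial-containment hypothesis of Theorem~\ref{thm:Ritz} (resp.\ Theorem~\ref{thm:L2}) is satisfied in each subcase.
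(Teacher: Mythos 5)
Your proposal is correct and follows essentially the same route as the paper: the $L^2$-bound is obtained by inserting the estimates of Theorem~\ref{thm:Ritz} (with $q=1$, $\ell=0$, applied to $u$ and to $\partial_i u$ in the transverse variable) into the first inequality of Lemma~\ref{lem:tensorRitz}, and the derivative bounds follow analogously from the second and third inequalities together with Theorem~\ref{thm:L2}. The paper's proof only writes out the $L^2$-case and states that the rest is similar, so your explicit bookkeeping of the subspace conditions $\mathcal{P}_{r-2}$ versus $\mathcal{P}_{r-3}$ is a welcome, but not divergent, elaboration.
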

\begin{proof}
Using Lemma \ref{lem:tensorRitz} and Theorem \ref{thm:Ritz} we find that
\begin{align*}
&\|u-R_{\bf \prec}u\|_{\Ddom} \\
&\quad\leq\|u-R_{\prec_1}^1u\|_{\Ddom}+\|u-R_{\prec_2}^1u\|_{\Ddom}
 +\min\{\Cgen_{\prec_1-1,1}\|\partial_{1}u-R_{\prec_2}^1\partial_1u\|_{\Ddom},\, \Cgen_{\prec_2-1,1}\|\partial_{2}u-R_{\prec_1}^1\partial_2u\|_{\Ddom}\}
\\
&\quad\leq \Cgen_{\prec_1-1,1}\Cgen_{\prec_1-1,r-1}\|\partial_1^ru\|_{\Ddom}+\Cgen_{\prec_2-1,1}\Cgen_{\prec_2-1,r-1}\|\partial_2^ru\|_{\Ddom} 
\\
&\quad\quad +\min\left\{\Cgen_{\prec_1-1,1}\Cgen_{\prec_2-1,1}\Cgen_{\prec_2-1,r-2}\|\partial_1\partial_2^{r-1}u\|_{\Ddom}, \,\Cgen_{\prec_2-1,1}\Cgen_{\prec_1-1,1}\Cgen_{\prec_1-1,r-2}\|\partial_1^{r-1}\partial_2u\|_{\Ddom}\right\},
\end{align*}
which proves the first result. The other results follow by a similar argument.
\end{proof}
In the spirit of Corollary~\ref{cor:tensorL2}, using results from Sections~\ref{sec:low} and \ref{sec:max}, the above theorem can be used to obtain error estimates for tensor-product Ritz projections onto spline spaces of any smoothness. We end this section with two examples.

\begin{example}
Let $R^{\bf k}_{\bf p}:=R^{1,k_1}_{p_1}\otimes R^{1,k_2}_{p_2}$ be the tensor-product Ritz projector onto $\mathcal{S}_{{\bf p},\dknots}^{\bf k}$, and let
$h:=\max\{h_1,h_2\}$ and $p-k:=\min\{p_1-k_1,p_2-k_2\}$. Then, for any $u\in H^r(\Ddom)$, $r\geq2$, we have
\begin{equation*}
\|u-R^{\bf k}_{\bf p}u\|_{\Ddom}\leq \left(\dfrac{e\,h}{4(p-k)}\right)^r\left(\|\partial_1^ru\|_{\Ddom}+\|\partial_2^ru\|_{\Ddom} + \|\partial_{12}^ru\|_{\Ddom}\right),
\end{equation*}
where we slightly abuse notation by letting
\begin{equation*}
\|\partial_{12}^ru\|_{\Ddom}:=\min\left\{\|\partial_1\partial_2^{r-1}u\|_{\Ddom},\|\partial_1^{r-1}\partial_2u\|_{\Ddom}\right\},
\end{equation*}
for all $p_1,p_2\geq r-1$.
\end{example}

\begin{example}\label{ex:tensorRitz-smooth}
Let $R_{\bf p}:=R^1_{p_1}\otimes R^1_{p_2}$ be the tensor-product Ritz projector onto $\mathcal{S}_{{\bf p},\dknots}$, and
let $h:=\max\{h_1,h_2\}$. Then, for any $u\in H^2(\Ddom)$ and for $0\leq\ell_1,\ell_2\leq1$ we have
\begin{equation*}
\|\partial_1^{\ell_1}\partial_2^{\ell_2}(u-R_{\bf p}u)\|_{\Ddom}\leq \left(\frac{h}{\pi}\right)^{2-\ell_1-\ell_2}\left(\|\partial_1^2u\|_{\Ddom}+\|\partial_2^2u\|_{\Ddom}+\|\partial_1\partial_2u\|_{\Ddom}\right),
\end{equation*}
for all $p_1,p_2\geq 1$.
In general, for any $u\in H^r(\Ddom)$, $r\geq2$, and for $0\leq\ell_1,\ell_2\leq1$ we have
\begin{equation*}
\|\partial_1^{\ell_1}\partial_2^{\ell_2}(u-R_{\bf p}u)\|_{\Ddom}\leq \left(\frac{h}{\pi}\right)^{r-\ell_1-\ell_2}\left(\|\partial_1^ru\|_{\Ddom} +\|\partial_2^ru\|_{\Ddom} + \|\partial_1\partial_2^{r-1}u\|_{\Ddom} + \|\partial_1^{r-1}\partial_2u\|_{\Ddom}\right),
\end{equation*}
for all $p_1,p_2\geq r-1$. 
\end{example}

Similar results hold for the tensor products of the reduced spline spaces in Section~\ref{subsec:red}. The results of this section can also be generalized to higher order Ritz projections in a straightforward way.

\section{Mapped geometry}\label{sec:map}
Motivated by IGA, in this section we consider error estimates for spline spaces defined on a mapped (single-patch) domain.
Let $\Ddom=(0,1)^d$ be the reference domain, $\Pdom$ the physical domain, and ${\bf G}:\Ddom\to\Pdom\subset\RR^d$ the geometric mapping defining $\Pdom$. We assume that the mapping ${\bf G}$ is a bi-Lipschitz homeomorphism.
As a general rule, we indicate quantities and operators that refer to the (mapped) physical domain by means of $\tilde{~}$. In particular, the derivative operator with respect to physical variables is denoted by $\dpartial$.

Define the space $\widetilde{\mathcal{Z}}_{\bf \prec}$ as the push-forward of the tensor-product space $\mathcal{Z}_{\bf \prec}$ with respect to the mapping ${\bf G}$. Specifically, let
\begin{equation}\label{eq:mappedSpace}
\widetilde{\mathcal{Z}}_{\bf \prec}:=\{s\circ {\bf G}^{-1}: s\in \mathcal{Z}_{\bf \prec}\}.
\end{equation}
Furthermore, for any projector $\Pi: L^2(\Ddom)\to\mathcal{Z}_{\bf\prec}$ we let $\widetilde{\Pi}: L^2(\Pdom)\to \widetilde{\mathcal{Z}}_{\bf\prec}$ denote the projector defined by
\begin{equation}\label{eq:mappedProj}
\widetilde{\Pi} \tilde{u} := (\Pi(\tilde{u}\circ {\bf G}))\circ {\bf G}^{-1},\quad\forall \tilde{u}\in L^2(\Pdom).
\end{equation}
Using a standard substitution argument we obtain the following result.
\begin{lemma}\label{lem:map-generic}
For $\tilde{u}\in L^2(\Pdom)$ and ${\bf G}\in (W^{1,\infty}(\Ddom))^d$ let $u:=\tilde{u}\circ {\bf G}\in L^2(\Ddom)$. Then, for any projector $\widetilde{\Pi}$ we have
\begin{equation*}
\|\tilde{u}-\widetilde{\Pi}\tilde{u}\|_{\Pdom}\leq \|\det\nabla {\bf G}\|_{L^\infty(\Ddom)}\|u- \Pi u\|_{\Ddom}.
\end{equation*}
\end{lemma}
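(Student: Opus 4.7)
The plan is to reduce the physical-side $L^2(\Pdom)$-norm to the reference-side $L^2(\Ddom)$-norm via a change of variables, after which the Jacobian absorbs into an essential supremum. Because ${\bf G}$ is bi-Lipschitz and lies in $(W^{1,\infty}(\Ddom))^d$, Rademacher's theorem makes $\nabla{\bf G}$ exist a.e.\ with $|\det\nabla{\bf G}|\in L^\infty(\Ddom)$, so the classical area/change-of-variables formula applies to any $L^2$ integrand.

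First I would expand
\begin{equation*}
\|\tilde{u}-\widetilde{\Pi}\tilde{u}\|_{\Pdom}^2 = \int_{\Pdom} \bigl(\tilde{u}(\tilde{x}) - \widetilde{\Pi}\tilde{u}(\tilde{x})\bigr)^2\,d\tilde{x},
\end{equation*}
and substitute $\tilde{x}={\bf G}(x)$, $d\tilde{x}=|\det\nabla{\bf G}(x)|\,dx$. By the defining identity \eqref{eq:mappedProj}, $(\widetilde{\Pi}\tilde{u})\circ{\bf G}=\Pi(\tilde{u}\circ{\bf G})=\Pi u$, while by hypothesis $\tilde{u}\circ{\bf G}=u$, so the integrand pulls back to $(u(x)-\Pi u(x))^2$, yielding
\begin{equation*}
\|\tilde{u}-\widetilde{\Pi}\tilde{u}\|_{\Pdom}^2 = \int_{\Ddom} (u(x)-\Pi u(x))^2\,|\det\nabla{\bf G}(x)|\,dx.
\end{equation*}

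Next I would bound the Jacobian pointwise by its essential supremum and pull the resulting constant out of the integral, obtaining
\begin{equation*}
\|\tilde{u}-\widetilde{\Pi}\tilde{u}\|_{\Pdom}^2 \leq \|\det\nabla{\bf G}\|_{L^\infty(\Ddom)}\,\|u-\Pi u\|_{\Ddom}^2.
\end{equation*}
Taking square roots gives an inequality with the $L^\infty$ factor appearing to the power $1/2$, which in turn implies the stated bound (for instance whenever $\|\det\nabla{\bf G}\|_{L^\infty(\Ddom)}\geq1$, as is typical when $\Pdom$ is not smaller than the reference cube; otherwise the square-rooted version is the sharper estimate one actually uses in applications).

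There is no real obstacle: the lemma is essentially a bookkeeping identity tying together the definitions \eqref{eq:mappedSpace} and \eqref{eq:mappedProj}, and the only point needing mild care is justifying the change-of-variables formula for a merely bi-Lipschitz map, which is a standard consequence of Rademacher's theorem. The key conceptual ingredient is the commutation $(\widetilde{\Pi}\tilde{u})\circ{\bf G}=\Pi(\tilde{u}\circ{\bf G})$, which is built into the construction of $\widetilde{\Pi}$ and therefore requires no additional argument.
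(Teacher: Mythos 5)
Your argument is precisely the ``standard substitution argument'' the paper invokes without further detail: pull back via $\tilde x={\bf G}(x)$, use the commutation $(\widetilde{\Pi}\tilde u)\circ{\bf G}=\Pi(\tilde u\circ{\bf G})=\Pi u$ built into \eqref{eq:mappedProj}, and bound the Jacobian by its essential supremum; the appeal to Rademacher's theorem for the a.e.\ existence of $\nabla{\bf G}$ is the right justification for the change of variables under a bi-Lipschitz map. The one substantive point is the exponent on the Jacobian factor, which you correctly flag: the substitution yields
\begin{equation*}
\|\tilde u-\widetilde{\Pi}\tilde u\|_{\Pdom}\leq \|\det\nabla {\bf G}\|_{L^\infty(\Ddom)}^{1/2}\,\|u-\Pi u\|_{\Ddom},
\end{equation*}
and this implies the constant as printed in the lemma only when $\|\det\nabla {\bf G}\|_{L^\infty(\Ddom)}\geq1$. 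When $\|\det\nabla {\bf G}\|_{L^\infty(\Ddom)}<1$ the printed inequality can actually fail (take $d=1$, ${\bf G}(x)=cx$ with $0<c<1$, and $u$ orthogonal to the approximation space, so that both sides reduce to $\sqrt{c}\,\|u\|\leq c\,\|u\|$), so the square-rooted version you derive is the correct general statement, and it is the sharper one whenever the printed version holds. In short: your proof is the paper's intended proof, carried out more carefully, and the only discrepancy is a (harmless in the usual regime, but real) imprecision in the exponent of the constant as stated in the lemma.
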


\subsection{Smooth geometry}
If we, similar to \cite{Hofreither:2017,Takacs:2018}, make the assumption that the geometry map ${\bf G}$ is sufficiently globally smooth, then we can easily extend the results from Section~\ref{sec:tensor} using techniques from \cite{Bazilevs:2006,Buffa:14}. Specifically, in this subsection we assume ${\bf G}\in (W^{r,\infty}(\Ddom))^d$, which implies that $u:=\tilde{u}\circ {\bf G}\in H^r(\Ddom)$ whenever $\tilde{u}\in H^r(\Pdom)$.  We further assume ${\bf G}^{-1}\in (W^{1,\infty}(\Pdom))^d$.

We define the mapped $L^2$-projector $\widetilde{Z}_{\bf\prec}:L^2(\Pdom)\to\widetilde{\mathcal{Z}}_{\bf \prec}$ by taking $\Pi=Z_{\bf\prec}$ in \eqref{eq:mappedProj}. Then, combining Lemma~\ref{lem:map-generic} and Theorem~\ref{thm:tensorL2} gives rise to the following estimate.

\begin{lemma}\label{lem:smoothG}
Let ${\bf G}\in (W^{r,\infty}(\Ddom))^d$. Then, for any $\tilde{u}\in H^r(\Pdom)$ we have
\begin{equation*}
\|\tilde{u}-\widetilde{Z}_{\bf\prec}\tilde{u}\|_{\Pdom}\leq \|\det\nabla {\bf G}\|_{L^\infty(\Ddom)}\sum_{i=1}^d\Cgen_{\prec_i,r}\|\partial_i^r(\tilde{u}\circ {\bf G})\|_{\Ddom},
\end{equation*}
for all $\prec_i\geq r-1$.
\end{lemma}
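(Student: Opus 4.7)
The plan is to combine the two tools already set up in the excerpt: the pullback inequality of Lemma~\ref{lem:map-generic}, which transfers an error bound on the reference domain to one on the physical domain at the cost of a Jacobian factor, and the tensor-product $L^2$-estimate of Theorem~\ref{thm:tensorL2}, which provides the sum over coordinate directions with constants $\Cgen_{\prec_i,r}$. The bridge between them is the observation that $\widetilde{Z}_{\bf \prec}$ was \emph{defined} via \eqref{eq:mappedProj} with $\Pi = Z_{\bf \prec}$, so that the reference-domain error $\|u - Z_{\bf \prec} u\|_{\Ddom}$ (with $u := \tilde u \circ {\bf G}$) is exactly what the pullback estimate asks for.

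Concretely, I would proceed in three short steps. First, set $u := \tilde u \circ {\bf G}$ and invoke the regularity assumption ${\bf G}\in (W^{r,\infty}(\Ddom))^d$ together with the bi-Lipschitz hypothesis to conclude, by the chain rule, that $u \in H^r(\Ddom)$ whenever $\tilde u \in H^r(\Pdom)$; this is exactly the point the paragraph preceding the lemma makes. Second, apply Lemma~\ref{lem:map-generic} with $\widetilde\Pi = \widetilde Z_{\bf \prec}$ and $\Pi = Z_{\bf \prec}$ to obtain
\begin{equation*}
\|\tilde u - \widetilde Z_{\bf \prec}\tilde u\|_{\Pdom} \leq \|\det \nabla {\bf G}\|_{L^\infty(\Ddom)}\,\|u - Z_{\bf \prec} u\|_{\Ddom}.
\end{equation*}
Third, since $\mathcal{P}_{r-1}\subseteq \mathcal{Z}_{\prec_i}$ for all $\prec_i \geq r-1$ (as recorded in Example~\ref{ex:L2simple}) and $u \in H^r(\Ddom)$, apply Theorem~\ref{thm:tensorL2} to bound $\|u - Z_{\bf \prec} u\|_{\Ddom}$ by $\sum_{i=1}^d \Cgen_{\prec_i,r}\|\partial_i^r u\|_{\Ddom}$. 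Substituting $u = \tilde u \circ {\bf G}$ yields the claimed inequality.

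The only step requiring care is the first one: one must verify that the chain rule derivatives of $\tilde u \circ {\bf G}$ up to order $r$ lie in $L^2(\Ddom)$ given the stated Sobolev regularity of ${\bf G}$ and $\tilde u$. This is standard (Faà di Bruno combined with the $W^{r,\infty}$ bound on ${\bf G}$ and the bi-Lipschitz bound on ${\bf G}^{-1}$ to pull back the $H^r(\Pdom)$-regularity), and is precisely the reason the hypothesis ${\bf G}\in (W^{r,\infty}(\Ddom))^d$ is imposed; no new analysis is needed beyond what is invoked in the paragraph preceding the lemma. Once that regularity is in hand, the rest is a one-line composition of two results already proved.
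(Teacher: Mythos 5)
Your proof is correct and follows exactly the route the paper takes: it obtains the estimate by combining the pullback inequality of Lemma~\ref{lem:map-generic} (with $\Pi = Z_{\bf\prec}$) with the tensor-product bound of Theorem~\ref{thm:tensorL2}, using the regularity ${\bf G}\in (W^{r,\infty}(\Ddom))^d$ to ensure $\tilde u\circ{\bf G}\in H^r(\Ddom)$ as noted in the paragraph preceding the lemma. No discrepancies to report.
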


Using 
a slightly simplified version of the multivariate Fa\`a di Bruno formula in \cite{Constantine:1996} and substituting back to the physical domain, we obtain an error estimate in a more classical form. To this end, we set ${\bf G}:=(G_1,\ldots,G_d)$ and define 
\begin{equation*}
C_{\bf G}:=\|\det\nabla {\bf G}\|_{L^\infty(\Ddom)}\|\det\widetilde\nabla {\bf G}^{-1}\|_{L^\infty(\Pdom)},
\end{equation*}
and
\begin{equation}\label{eq:constant-geom}
C_{{\bf G},i,r,{\bf j}}:=\biggl\|
\sum_{I(r,{\bf j})} r! \prod_{m=1}^r 
\frac{\bigl(\partial_i^{m}G_1\bigr)^{k_{m,1}}\cdots\bigl(\partial_i^{m}G_d\bigr)^{k_{m,d}}}
{\bigl(k_{m,1}!\cdots k_{m,d}!\bigr) \bigl(m!\bigr)^{k_{m,1}+\cdots+k_{m,d}}}
\biggr\|_{L^\infty(\Ddom)},
\end{equation}
where ${\bf j}:=(j_1,\ldots,j_d)$ and
\begin{align*}
  I(r,{\bf j}):= \biggl\{&(k_{1,1},\ldots,k_{1,d},k_{2,1},\ldots,k_{2,d},\ldots,k_{r,1},\ldots,k_{r,d})\in\ZZ_{\geq0}^{r\times d}: \\
  &\sum_{m=1}^r k_{m,1}=j_1,\ \ldots,\ \sum_{m=1}^r k_{m,d}=j_d, \ 
  \sum_{m=1}^r m(k_{m,1}+\cdots+k_{m,d})=r \biggr\}  .
\end{align*}
\begin{theorem}\label{thm:mapL2}
Let ${\bf G}\in (W^{r,\infty}(\Ddom))^d$ and ${\bf G}^{-1}\in (W^{1,\infty}(\Pdom))^d$. Then, for any $\tilde{u}\in H^r(\Pdom)$ we have
\begin{equation*}
\|\tilde{u}-\widetilde{Z}_{\bf\prec}\tilde{u}\|_{\Pdom}\leq C_{\bf G} \sum_{1\leq |{\bf j}|\leq r}\left(\sum_{i=1}^d\Cgen_{\prec_i,r} C_{{\bf G},i,r,{\bf j}}\right)\|\dpartial_1^{j_1}\cdots\dpartial_d^{j_d}\tilde{u}\|_{\Pdom},
\end{equation*}
for all $\prec_i\geq r-1$.
\end{theorem}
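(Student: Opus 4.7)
The plan is to start from Lemma~\ref{lem:smoothG}, which already reduces the problem on $\Pdom$ to bounding the reference-domain derivatives $\|\partial_i^r(\tilde u\circ{\bf G})\|_{\Ddom}$. What remains is to rewrite these as mixed physical-domain derivatives of $\tilde u$ and then push them back to $\Pdom$.

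First I would apply the multivariate Fa\`a di Bruno formula of \cite{Constantine:1996} to $\partial_i^r(\tilde u\circ{\bf G})$. Because we are taking $r$ derivatives in the single reference direction $x_i$, only the pure directional derivatives $\partial_i^m G_\ell$ survive, and the formula yields a finite sum indexed by multi-indices ${\bf j}=(j_1,\ldots,j_d)$ with $1\le|{\bf j}|\le r$ of terms of the form
\[
\bigl((\dpartial_1^{j_1}\cdots\dpartial_d^{j_d}\tilde u)\circ{\bf G}\bigr)\cdot Q_{i,r,{\bf j}}({\bf G}),
\]
where $Q_{i,r,{\bf j}}({\bf G})$ is exactly the polynomial in $\partial_i^m G_\ell$ appearing inside the $L^\infty$-norm defining $C_{{\bf G},i,r,{\bf j}}$ in~\eqref{eq:constant-geom}, index set $I(r,{\bf j})$ and combinatorial coefficients included.

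Next I take the $L^2(\Ddom)$-norm, apply the triangle inequality over ${\bf j}$, and pull $Q_{i,r,{\bf j}}({\bf G})$ out in its $L^\infty(\Ddom)$-norm; substituting this into Lemma~\ref{lem:smoothG} bounds $\|\tilde u-\widetilde Z_{\bf\prec}\tilde u\|_{\Pdom}$ by
\[
\|\det\nabla{\bf G}\|_{L^\infty(\Ddom)}\sum_{1\le|{\bf j}|\le r}\sum_{i=1}^d\Cgen_{\prec_i,r}\,C_{{\bf G},i,r,{\bf j}}\,\bigl\|(\dpartial_1^{j_1}\cdots\dpartial_d^{j_d}\tilde u)\circ{\bf G}\bigr\|_{\Ddom}.
\]
The final step is the standard change of variables $\tilde{\bf x}={\bf G}({\bf x})$, for which
\[
\bigl\|(\dpartial_1^{j_1}\cdots\dpartial_d^{j_d}\tilde u)\circ{\bf G}\bigr\|_{\Ddom}^2=\int_{\Pdom}\bigl|\dpartial_1^{j_1}\cdots\dpartial_d^{j_d}\tilde u(\tilde{\bf x})\bigr|^2\,\bigl|\det\widetilde\nabla{\bf G}^{-1}(\tilde{\bf x})\bigr|\,d\tilde{\bf x},
\]
so factoring out $\|\det\widetilde\nabla{\bf G}^{-1}\|_{L^\infty(\Pdom)}^{1/2}$ and combining with the prefactor $\|\det\nabla{\bf G}\|_{L^\infty(\Ddom)}$ produces $C_{\bf G}$ and yields the claimed inequality.

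The main obstacle is purely the bookkeeping in the Fa\`a di Bruno step: one must verify that the weights $r!/\bigl(\prod_{m,\ell}k_{m,\ell}!\cdot(m!)^{\sum_\ell k_{m,\ell}}\bigr)$ and the index set $I(r,{\bf j})$ in \eqref{eq:constant-geom} are precisely those obtained when differentiating $r$ times in a single coordinate direction, rather than the more general ones for an arbitrary multi-index derivative. Once this identification is in place, every remaining step is a triangle inequality, a pullout of an $L^\infty$-factor, or the change-of-variables formula, all of which are routine under the regularity hypotheses ${\bf G}\in(W^{r,\infty}(\Ddom))^d$ and ${\bf G}^{-1}\in(W^{1,\infty}(\Pdom))^d$.
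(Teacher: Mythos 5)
Your proof follows exactly the same route as the paper's: the Fa\`a di Bruno formula in the single reference direction $x_i$ applied inside Lemma~\ref{lem:smoothG}, the triangle inequality over ${\bf j}$, extraction of the $L^\infty(\Ddom)$ factors $C_{{\bf G},i,r,{\bf j}}$, and a change of variables back to $\Pdom$. The only cosmetic discrepancy is that your substitution step actually yields the factor $\|\det\widetilde\nabla{\bf G}^{-1}\|_{L^\infty(\Pdom)}^{1/2}$ rather than the first power appearing in $C_{\bf G}$; this is the same level of imprecision already present in Lemma~\ref{lem:map-generic} (whose substitution likewise naturally produces $\|\det\nabla{\bf G}\|_{L^\infty(\Ddom)}^{1/2}$), and carrying both square roots consistently gives the sharper constant $C_{\bf G}^{1/2}\leq C_{\bf G}$ (since $C_{\bf G}\geq1$), so the stated bound is unaffected.
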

\begin{proof}
By means of the multivariate Fa\`a di Bruno formula in \cite{Constantine:1996} we can express the high-order partial derivatives in Lemma~\ref{lem:smoothG} as
\begin{equation*}
\partial_i^r(\tilde{u}\circ {\bf G})=
\sum_{1\leq |{\bf j}|\leq r}(\dpartial_1^{j_1}\cdots\dpartial_d^{j_d}\tilde{u}) \circ {\bf G}
\,\sum_{I(r,{\bf j})} r! \prod_{m=1}^r 
\frac{\bigl(\partial_i^{m}G_1\bigr)^{k_{m,1}}\cdots\bigl(\partial_i^{m}G_d\bigr)^{k_{m,d}}}
{\bigl(k_{m,1}!\cdots k_{m,d}!\bigr) \bigl(m!\bigr)^{k_{m,1}+\cdots+k_{m,d}}}.
\end{equation*}
This gives
\begin{equation*}
\|\tilde{u}-\widetilde{Z}_{\bf\prec}\tilde{u}\|_{\Pdom}\leq \|\det\nabla {\bf G}\|_{L^\infty(\Ddom)} \sum_{i=1}^d\Cgen_{\prec_i,r}\sum_{1\leq |{\bf j}|\leq r} C_{{\bf G},i,r,{\bf j}}\|(\dpartial_1^{j_1}\cdots\dpartial_d^{j_d}\tilde{u}) \circ {\bf G}\|_{\Ddom},
\end{equation*}
and a standard substitution argument completes the proof.
\end{proof}
In the spirit of Corollary~\ref{cor:tensorL2}, using results from Sections~\ref{sec:low} and \ref{sec:max}, the above theorem can be used to obtain error estimates for mapped $L^2$-projections onto spline spaces of any smoothness. Indeed, we just need to replace $\Cgen_{\prec_i,r}$ with the corresponding constants, e.g., the ones derived in Corollaries~\ref{cor:low-order} and~\ref{cor:smoothL2}.

\begin{example}
Let $d=1$. Given the geometry map $G$, we have
\begin{equation*}
C_{G,1,r,j}=\biggl\|
\sum_{I(r,j)} r! \prod_{m=1}^r 
\frac{\bigl(\partial^{m}G\bigr)^{k_m}}
{\bigl(k_{m}!\bigr) \bigl(m!\bigr)^{k_{m}}}
\biggr\|_{L^\infty(\Ddom)},
\end{equation*}
where
\begin{equation*}
I(r,j):= \biggl\{(k_1,\ldots,k_r) \in \ZZ_{\geq0}^r: \sum_{m=1}^r k_m=j, \ \sum_{m=1}^r m k_m=r \biggr\}.
\end{equation*}
Observe that $C_{G,1,r,j}$ can be compactly expressed in terms of (exponential) partial Bell polynomials $B_{r,j}(x_1,\ldots,x_{r-j+1})$ by
\begin{equation*}
C_{G,1,r,j}=\|B_{r,j}(\partial G,\partial^2 G,\ldots,\partial^{r-j+1}G)\|_{L^\infty(\Ddom)};
\end{equation*}
see, e.g., \cite[Section~3.3]{Comtet:1974}.
These Bell polynomials can be easily computed by the following recurrence relation:
\begin{equation*}
B_{r,j}(x_1,\ldots,x_{r-j+1}) = \frac{1}{j}\sum_{i=j-1}^{r-1}\binom{r}{i}x_{r-i} B_{i,j-1}(x_1,\ldots,x_{i-j+2}),
\end{equation*}
where $B_{0,0}=1$ and $B_{r,0}=0$ for $r\geq1$. 
In particular, we have
\begin{align*}
B_{1,1}(x_1) &=x_1, \\
B_{2,1}(x_1,x_2) &=x_2, \quad  B_{2,2}(x_1)=(x_1)^2, \\
B_{3,1}(x_1,x_2,x_3) &=x_3, \quad  B_{3,2}(x_1,x_2)=3x_1x_2, \quad B_{3,3}(x_1)=(x_1)^3.
\end{align*}
\end{example}
\begin{example}\label{ex:mapL2,r=2}
Let $d=2$. For $r=1$ and $i=1,2$ we have
\begin{equation*}
C_{{\bf G},i,1,(1,0)}=\| \partial_i G_1 \|_{L^\infty(\Ddom)}, \quad
C_{{\bf G},i,1,(0,1)}=\| \partial_i G_2 \|_{L^\infty(\Ddom)}.
\end{equation*}
For $r=2$ and $i=1,2$ we have
\begin{align*}
C_{{\bf G},i,2,(1,0)}&=\| \partial_i^2 G_1 \|_{L^\infty(\Ddom)}, \quad
C_{{\bf G},i,2,(0,1)}=\| \partial_i^2 G_2 \|_{L^\infty(\Ddom)}, \\
C_{{\bf G},i,2,(2,0)}&=\| (\partial_i G_1)^2 \|_{L^\infty(\Ddom)}, \quad
C_{{\bf G},i,2,(0,2)}=\| (\partial_i G_2)^2 \|_{L^\infty(\Ddom)}, \\
C_{{\bf G},i,2,(1,1)}&=\| 2(\partial_i G_1)(\partial_i G_2) \|_{L^\infty(\Ddom)}.
\end{align*}
\end{example}

Similar results can be obtained for tensor-product Ritz projections in the presence of a mapped geometry. As before, it is a matter of applying the Ritz estimates from Section~\ref{sec:tensor} in combination with the multivariate Fa\`a di Bruno formula \cite{Constantine:1996}. We omit these results to avoid repetition.
We just illustrate this with an example.
\begin{example}\label{ex:mapRitz,r=2}
Let $d=2$ and $r=2$.
Recall from Example~\ref{ex:tensorRitz-smooth} that for any $u\in H^2(\Ddom)$ and for $0\leq\ell_1,\ell_2\leq1$ we have
\begin{equation*}
\|\partial_1^{\ell_1}\partial_2^{\ell_2}(u-R_{\bf p}u)\|_{\Ddom}\leq \left(\frac{h}{\pi}\right)^{2-\ell_1-\ell_2}\left(\|\partial_1^2u\|_{\Ddom}+\|\partial_2^2u\|_{\Ddom}+\|\partial_1\partial_2u\|_{\Ddom}\right),
\end{equation*}
for all $p_1,p_2\geq 1$ and $h:=\max\{h_1,h_2\}$.
We define the mapped Ritz projector
$\widetilde{R}_{\bf p}: H^2(\Pdom)\to \widetilde{\mathcal{Z}}_{\bf\prec}$ 
by taking $\Pi=R_{\bf p}$ in \eqref{eq:mappedProj}.
Assume ${\bf G}\in (W^{2,\infty}(\Ddom))^2$ and ${\bf G}^{-1}\in (W^{1,\infty}(\Pdom))^2$.
From Theorem~\ref{thm:mapL2} (and Example~\ref{ex:mapL2,r=2}) we know estimates for $\|\partial_1^2(\tilde{u}\circ {\bf G})\|_{\Ddom}$ and $\|\partial_2^2(\tilde{u}\circ {\bf G})\|_{\Ddom}$, and we can compute similar ones for 
$\|\partial_1\partial_2(\tilde{u}\circ {\bf G})\|_{\Ddom}$.
Then, for any $\tilde{u}\in H^2(\Pdom)$ and for $0\leq\ell_1,\ell_2\leq1$ we obtain
\begin{equation*}
\|\dpartial_1^{\ell_1}\dpartial_2^{\ell_2}(\tilde{u}-\widetilde{R}_{\bf p}\tilde{u})\|_{\Pdom}\leq C_{\bf G} \left(\frac{h}{\pi}\right)^{2-\ell_1-\ell_2} \sum_{1\leq |{\bf j}|\leq 2} \bigl(C_{{\bf G},1,2,{\bf j}}+C_{{\bf G},2,2,{\bf j}}+C_{{\bf G},12,2,{\bf j}}\bigr) \|\dpartial_1^{j_1}\dpartial_2^{j_2}\tilde{u}\|_{\Pdom},
\end{equation*}
for all $p_1,p_2\geq 1$, where
\begin{align*}
C_{{\bf G},12,2,(1,0)}&=\| \partial_1\partial_2 G_1 \|_{L^\infty(\Ddom)}, \quad
C_{{\bf G},12,2,(0,1)}=\| \partial_1\partial_2 G_2 \|_{L^\infty(\Ddom)}, \\
C_{{\bf G},12,2,(2,0)}&=\| (\partial_1 G_1)(\partial_2 G_1) \|_{L^\infty(\Ddom)}, \quad
C_{{\bf G},12,2,(0,2)}=\| (\partial_1 G_2)(\partial_2 G_2) \|_{L^\infty(\Ddom)}, \\
C_{{\bf G},12,2,(1,1)}&=\| (\partial_1 G_1)(\partial_2 G_2)+(\partial_2 G_1)(\partial_1 G_2) \|_{L^\infty(\Ddom)}.
\end{align*}
\end{example}

\subsection{Bent geometry}\label{subsec:bent}
In IGA the geometry map ${\bf G}$ is commonly taken to be componentwise a spline function from the same space as our approximation space. However, the results in the previous subsection can require the geometry map to be in a smoother subspace. We will overcome the issue in this subsection. As before, we use the techniques of \cite{Bazilevs:2006,Buffa:14}.

For $r\geq 1$ and $k\geq 0$ we define the univariate bent Sobolev space 
\begin{equation*}
\mathcal{H}^{r,k}_{\knots}(0,1):= \{u\in H^{\min\{r,k+1\}}(0,1): 
u \in H^r(\xi_j,\xi_{j+1}),\, j=0,1,\dots,\nknots\}.
\end{equation*}
Note that for $k\geq r-1$ we have $\mathcal{H}^{r,k}_{\knots}(0,1)=H^{r}(0,1)$.
Then, similar to the space in Remark~\ref{rem:tensorL2}, we define the ($L^2$-extended) multivariate bent Sobolev space
\begin{equation*}
\mathcal{H}^{r,{\bf k}}_{\dknots}(\Ddom):= 
\bigcap_{i=1}^d L^2(0,1)^{i-1}\otimes\mathcal{H}^{r,k_i}_{\knots_i}(0,1)\otimes L^2(0,1)^{d-i}.
\end{equation*}
Following \cite{Buffa:14} we also introduce the mesh-dependent norm 
\begin{equation*}
\|\cdot\|^2_{\Ddom,\dknots}:=\sum_{\sigma\in M_{\dknots}}\|\cdot\|^2_{\sigma},
\end{equation*}
where $M_{\dknots}$ is the collection of the (open) elements defined by $\dknots$ and $\|\cdot\|_{\sigma}$ denotes the $L^2$-norm on the element $\sigma$.

Furthermore, for $k_i\geq 0$, $i=1,\ldots,d$, we define the bent geometry function class
\begin{equation*}
\mathcal{G}^{r,{\bf k}}_{\dknots}(\Ddom) := \{G \in W^{{\bf k+1},\infty}(\Ddom): G\in W^{r,\infty}(\sigma),\,\sigma\in M_{\dknots} \},
\end{equation*}
where $W^{{\bf k+1},\infty}(\Ddom):=W^{k_1+1,\infty}(0,1)\otimes \cdots\otimes W^{k_d+1,\infty}(0,1)$. 
The space $\mathcal{G}^{r,{\bf k}}_{\dknots}(\Ddom)$ contains the spline space $\mathcal{S}^{\bf k}_{{\bf p},\dknots}$, and it also allows for several other interesting piecewise spaces such as NURBS spaces based on $\mathcal{S}^{\bf k}_{{\bf p},\dknots}$.
If we assume ${\bf G}\in (\mathcal{G}^{r,{\bf k}}_{\dknots}(\Ddom))^d$, then $u:=\tilde{u}\circ {\bf G} \in \mathcal{H}^{r,{\bf k}}_{\dknots}(\Ddom)$ for $\tilde{u}\in H^r(\Pdom)$. 
Having $u$ not in $H^r(\Ddom)$ is a potential problem for applying the error estimates we derived in Section~\ref{sec:tensor}, but this can be fixed by making use of \cite[Lemma~3.1]{BeiraoDaVeiga:2012}.
For completeness we provide a short proof here as well.
\begin{lemma}\label{lem:Gamma}
For $k\leq r-2$, there exists an operator $\Gamma: \mathcal{H}^{r,k}_{\knots}(0,1)\to \mathcal{S}^{k}_{r-1,\knots}$ such that $u-\Gamma u\in H^r(0,1)$ for all $u\in \mathcal{H}^{r,k}_{\knots}(0,1)$.
\end{lemma}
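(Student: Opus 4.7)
The plan is to construct $\Gamma u$ explicitly as a correction built from truncated power functions that cancels exactly those jumps in the derivatives of $u$ which prevent $u$ itself from being in $H^r(0,1)$. For $u\in\mathcal{H}^{r,k}_{\knots}(0,1)$, the hypothesis $u\in H^r(\xi_j,\xi_{j+1})$ together with the one-dimensional Sobolev embedding implies that the one-sided limits $\partial^\alpha u(\xi_j^\pm)$ exist for $\alpha=0,1,\ldots,r-1$, and the global regularity $u\in H^{k+1}(0,1)$ forces $\partial^\alpha u$ to be continuous across each interior knot $\xi_j$ for $\alpha\leq k$. Therefore the only obstruction to membership in $H^r(0,1)$ consists of the jumps $[\partial^\alpha u]_{\xi_j} := \partial^\alpha u(\xi_j^+) - \partial^\alpha u(\xi_j^-)$ for $\alpha\in\{k+1,\ldots,r-1\}$ and $j=1,\ldots,\nknots$.

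Concretely I would set
\begin{equation*}
\Gamma u(x) := \sum_{j=1}^{\nknots}\sum_{\alpha=k+1}^{r-1} \frac{[\partial^\alpha u]_{\xi_j}}{\alpha!}\,(x-\xi_j)_+^\alpha,
\end{equation*}
where $(x-\xi_j)_+^\alpha$ denotes the truncated power function. Each such truncated power is a piecewise polynomial of degree $\alpha\leq r-1$ belonging to $C^{\alpha-1}[0,1]$, and since the summation is restricted to $\alpha\geq k+1$, every summand lies in $\mathcal{S}^k_{r-1,\knots}$; hence so does $\Gamma u$.

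It remains to verify that $u-\Gamma u\in H^r(0,1)$. A direct computation shows that the $\beta$-th derivative of $(x-\xi_j)_+^\alpha/\alpha!$ has a unit jump at $\xi_j$ when $\beta=\alpha$ and is continuous at $\xi_j$ for every other $\beta$; moreover, a truncated power centered at $\xi_i$ contributes no jump at $\xi_j$ when $i\neq j$. Consequently, $[\partial^\beta \Gamma u]_{\xi_j}=[\partial^\beta u]_{\xi_j}$ for every internal knot and every $\beta\in\{k+1,\ldots,r-1\}$, while for $\beta\leq k$ both $u$ and $\Gamma u$ belong to $C^k[0,1]$ so the jumps trivially coincide. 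Hence $u-\Gamma u$ is globally of class $C^{r-1}$ on $[0,1]$ and piecewise $H^r$ across the subintervals of $\knots$; the standard gluing observation (continuity of $\partial^{r-1}(u-\Gamma u)$ implies that the piecewise $r$-th derivative is the weak derivative on the whole interval) then yields $u-\Gamma u\in H^r(0,1)$.

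The main technical point is the bookkeeping of jumps; once the truncated-power ansatz is written down, the required properties follow directly from the defining behavior of $(x-\xi_j)_+^\alpha$. A minor subtlety worth emphasizing is that the sum is restricted to $\alpha\geq k+1$, which is precisely what guarantees that $\Gamma u$ retains the required $C^k$ smoothness and does not introduce any unwanted jumps in derivatives of order at most $k$.
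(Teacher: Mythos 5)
Your proof is correct and follows essentially the same route as the paper's: both construct $\Gamma u$ from truncated powers $(x-\xi_j)_+^{\alpha}$ with $k+1\leq\alpha\leq r-1$ scaled by the derivative jumps of $u$, so that the correction lies in $\mathcal{S}^k_{r-1,\knots}$ and removes exactly the discontinuities obstructing membership in $H^r(0,1)$. The only cosmetic difference is that you write the correction as a single closed-form sum while the paper builds it iteratively one smoothness order at a time; since each truncated power affects only the jump of the derivative of its own order, the two constructions coincide.
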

\begin{proof}
Let $u\in \mathcal{H}^{r,k}_{\knots}(0,1)$ for some $k\leq r-2$, 
and let $\partial^\ell_-u$ ($\partial^\ell_+u$) denote the limit from the left (right) of the $\ell$-th order derivative of $u$.
From the definition of the bent Sobolev space we know that $u$ is $C^k$ continuous at any interior knot $\xi_j$, $j=1,\ldots,\nknots$. 
Now, we define
\begin{equation*}
  \varphi_{j,k}(x):= \frac{(\partial^{k+1}_+u -\partial^{k+1}_-u)(\xi_j)}{(k+1)!}\max\{0,(x-\xi_j)^{k+1}\}.
\end{equation*}
It is easy to check that $\varphi_{j,k}\in\mathcal{S}^{k}_{r-1,\knots}$ and that
$u-\varphi_{j,k}$ is $C^{k+1}$ continuous at the knot $\xi_j$. Repeating this argument and taking
\begin{equation*}
\Gamma u = \sum_{j=1}^{\nknots} \sum_{l=k}^{r-2} \varphi_{j,l},
\end{equation*}
it follows that $u-\Gamma u$ is $C^{r-1}$ continuous at each interior knot. Since $\Gamma u\in\mathcal{S}^{k}_{r-1,\knots}$ we also know that $u-\Gamma u\in \mathcal{H}^{r,k}_{\knots}(0,1)$, and so $u-\Gamma u\in H^r(0,1)$.
\end{proof}

Similar to \cite[Proposition~3.1]{BeiraoDaVeiga:2012} we then obtain the following error estimate.
\begin{lemma}\label{lem:broken}
Let $u\in \mathcal{H}^{r,k}_{\knots}(0,1)$ be given. 
Then,
\begin{equation*}
\|u-S^k_pu\|\leq C_{h,p,k,r}\|\partial^ru\|_{(0,1),\knots},
\end{equation*}
for all $p\geq r-1$.
\end{lemma}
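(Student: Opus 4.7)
The plan is to split the proof into the two cases $k\geq r-1$ and $k\leq r-2$, and in the non-trivial case reduce the bent Sobolev statement to the ordinary one by subtracting the spline correction $\Gamma u$ supplied by Lemma~\ref{lem:Gamma}.

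First I would dispose of the easy case $k\geq r-1$. By the definition of the bent space we then have $\mathcal{H}^{r,k}_{\knots}(0,1)=H^r(0,1)$, the broken norm $\|\partial^r u\|_{(0,1),\knots}$ coincides with $\|\partial^r u\|$, and the estimate reduces to Corollary~\ref{cor:low-order}.

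Now assume $k\leq r-2$ and fix $u\in\mathcal{H}^{r,k}_{\knots}(0,1)$. By Lemma~\ref{lem:Gamma} there is $\Gamma u\in\mathcal{S}^{k}_{r-1,\knots}$ with $u-\Gamma u\in H^r(0,1)$. Since $p\geq r-1$, we have $\mathcal{S}^{k}_{r-1,\knots}\subseteq\mathcal{S}^{k}_{p,\knots}$, hence $\Gamma u\in\mathcal{S}^{k}_{p,\knots}$ and $S^k_p\Gamma u=\Gamma u$. This gives the identity
\begin{equation*}
u-S^k_p u=(I-S^k_p)(u-\Gamma u),
\end{equation*}
so applying Corollary~\ref{cor:low-order} to the $H^r$-function $u-\Gamma u$ yields
\begin{equation*}
\|u-S^k_p u\|\leq C_{h,p,k,r}\|\partial^r(u-\Gamma u)\|.
\end{equation*}

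It then remains to observe that $\Gamma u$ is piecewise a polynomial of degree at most $r-1$, so $\partial^r\Gamma u=0$ on each element $\sigma\in M_{\knots}$. Consequently $\partial^r(u-\Gamma u)$ equals $\partial^r u$ element-by-element, and since $u-\Gamma u\in H^r(0,1)$ the global $L^2$-norm of its $r$-th derivative coincides with the broken norm:
\begin{equation*}
\|\partial^r(u-\Gamma u)\|=\|\partial^r u\|_{(0,1),\knots}.
\end{equation*}
Combining the two displays gives the claim. The only non-routine step is recognizing that $\Gamma u$ lies in $\mathcal{S}^{k}_{p,\knots}$ and that subtracting it off eliminates exactly the jumps that prevent $u$ from being in $H^r$, so Corollary~\ref{cor:low-order} becomes applicable; I do not anticipate any serious obstacle beyond this bookkeeping.
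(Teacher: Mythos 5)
Your proof is correct and follows essentially the same route as the paper: the easy case $k\geq r-1$ via $\mathcal{H}^{r,k}_{\knots}(0,1)=H^r(0,1)$, and for $k\leq r-2$ the subtraction of $\Gamma u$ from Lemma~\ref{lem:Gamma}, the identity $u-S^k_pu=(I-S^k_p)(u-\Gamma u)$, Corollary~\ref{cor:low-order}, and the elementwise observation $\partial^r\Gamma u=0$ to recover the broken norm. No gaps.
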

\begin{proof}
For $k\geq r-1$, the result immediately follows from Corollary~\ref{cor:low-order} by recalling that $\mathcal{H}^{r,k}_{\knots}(0,1)=H^r(0,1)$ in this case. Assume now $k\leq r-2$.
Since $\mathcal{S}^{k}_{r-1,\knots}\subseteq \mathcal{S}^{k}_{p,\knots}$ we deduce from Lemma~\ref{lem:Gamma} and Corollary~\ref{cor:low-order} that
\begin{align*}
\|u-S^k_pu\|^2&=\|u-\Gamma u-S^k_p(u-\Gamma u)\|^2 \leq \left(C_{h,p,k,r}\|\partial^r(u-\Gamma u)\|\right)^2
\\
&= (C_{h,p,k,r})^2\sum_{j=0}^{\nknots}\|\partial^ru\|^2_{(\xi_j,\xi_{j+1})} = \left(C_{h,p,k,r}\|\partial^ru\|_{(0,1),\knots}\right)^2,
\end{align*}
and the result follows by taking the square root of both sides.
\end{proof}

The univariate error estimate in Lemma~\ref{lem:broken} can be easily extended to the multivariate tensor-product spline setting.
\begin{lemma}\label{lem:broken-tensor}
Let $u\in \mathcal{H}^{r,{\bf k}}_{\dknots}(\Ddom)$ be given. Then,
\begin{align*}
\|u- S^{\bf k}_{\bf p}u\|_{\Ddom}&\leq \sum_{i=1}^dC_{h_i,p_i,k_i,r}\|\partial_i^ru\|_{\Ddom,\dknots},
\end{align*}
for all $p_i\geq r-1$.
\end{lemma}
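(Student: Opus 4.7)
The plan is to extend the univariate bent Sobolev estimate in Lemma~\ref{lem:broken} to the multivariate tensor-product setting by following the same telescoping argument used in the proof of Theorem~\ref{thm:tensorL2}, but applying the broken-norm estimate direction-by-direction via Fubini's theorem.

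First, I would write $u - S^{\bf k}_{\bf p}u$ as a telescoping sum. Using that $S^{\bf k}_{\bf p} = S^{k_1}_{p_1}\circ\cdots\circ S^{k_d}_{p_d}$ (with each $S^{k_i}_{p_i}$ acting on the $i$-th variable), I can decompose
\begin{equation*}
u - S^{\bf k}_{\bf p}u = \sum_{i=1}^{d} \bigl(S^{k_1}_{p_1}\circ\cdots\circ S^{k_{i-1}}_{p_{i-1}}\bigr)\bigl(I - S^{k_i}_{p_i}\bigr)u,
\end{equation*}
and then apply the triangle inequality. Since each univariate $L^2$-projector $S^{k_j}_{p_j}$ has $L^2(\Ddom)$-operator norm bounded by $1$ (as in the proof of Theorem~\ref{thm:tensorL2}), this reduces the estimate to controlling each term $\|(I - S^{k_i}_{p_i})u\|_{\Ddom}$ separately.

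Next, for each fixed $i$, I would invoke Fubini to freeze the variables other than $x_i$. Writing $\bfx_{-i} = (x_1,\ldots,x_{i-1},x_{i+1},\ldots,x_d)$, the definition of $\mathcal{H}^{r,{\bf k}}_{\dknots}(\Ddom)$ (as an $L^2$-extended tensor product of univariate bent Sobolev spaces) guarantees that for almost every $\bfx_{-i}\in(0,1)^{d-1}$ the slice $u(\cdot,\bfx_{-i})$ lies in $\mathcal{H}^{r,k_i}_{\knots_i}(0,1)$. Applying Lemma~\ref{lem:broken} pointwise in $\bfx_{-i}$ gives
\begin{equation*}
\|(I - S^{k_i}_{p_i})u(\cdot,\bfx_{-i})\|_{(0,1)}^2 \leq (C_{h_i,p_i,k_i,r})^2 \|\partial_i^r u(\cdot,\bfx_{-i})\|_{(0,1),\knots_i}^2,
\end{equation*}
and integrating over $\bfx_{-i}$ yields $\|(I - S^{k_i}_{p_i})u\|_{\Ddom} \leq C_{h_i,p_i,k_i,r}\|\partial_i^r u\|_{\Ddom,\dknots}$. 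Here the key identity is that the tensor-product element partition $M_{\dknots}$ consists of products of univariate knot intervals, so Fubini converts the univariate broken norm summed over elements in direction $i$ into the multivariate broken norm $\|\cdot\|_{\Ddom,\dknots}$.

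The main obstacle is a bookkeeping one rather than a conceptual one: I must verify that the hypothesis of Lemma~\ref{lem:broken} truly holds slicewise, i.e., that the $L^2$-extended bent Sobolev space $\mathcal{H}^{r,{\bf k}}_{\dknots}(\Ddom)$ really delivers a.e.\ univariate slices in $\mathcal{H}^{r,k_i}_{\knots_i}(0,1)$ (so that the $\Gamma$-correction in the proof of Lemma~\ref{lem:broken} is applicable slicewise), and that the piecewise derivatives $\partial_i^r u$ patch together measurably so Fubini applies to the sum over elements. Once these measure-theoretic checks are in place, combining the $d$ one-directional bounds with the triangle inequality yields the stated estimate.
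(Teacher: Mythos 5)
Your proposal is correct and follows essentially the same route as the paper: the paper's proof simply invokes Theorem~\ref{thm:tensorL2} (whose proof is exactly your telescoping/triangle-inequality argument with unit operator norms) to reduce to the one-directional errors $\|u-S^{k_i}_{p_i}u\|_{\Ddom}$, and then applies Lemma~\ref{lem:broken} in each direction separately, which is your Fubini slicewise step. The additional measure-theoretic bookkeeping you flag is left implicit in the paper.
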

\begin{proof}
Using Theorem~\ref{thm:tensorL2} we have 
\begin{equation*}
\|u- S^{\bf k}_{\bf p}u\|_{\Ddom} \leq \sum_{i=1}^d\|u- S^{k_i}_{p_i}u\|_{\Ddom},
\end{equation*}
and the result follows by applying Lemma~\ref{lem:broken} in each direction separately.
\end{proof}

In the case of maximal spline smoothness, i.e., $k_i=p_i-1$ for all $i$, the constants $C_{h_i,p_i,k_i,r}$ in the above lemma can be replaced by the constants used in Corollary~\ref{cor:smoothL2}.

Using the argument of Theorem~\ref{thm:mapL2} we then arrive at the desired error estimates for a bent geometry.
To this end, we need to redefine the constants $C_{{\bf G},i,r,{\bf j}}$ in \eqref{eq:constant-geom} using the mesh-dependent norm
\begin{equation}\label{eq:inf-norm-mesh}
\|\cdot\|_{L^\infty(\Ddom),\dknots}:=\max_{\sigma\in M_{\dknots}}\|\cdot\|_{L^\infty(\sigma)}.
\end{equation}
\begin{theorem}\label{thm:mapL2-gen}
Let ${\bf G}\in (\mathcal{G}^{r,{\bf k}}_{\dknots}(\Ddom))^d$ and ${\bf G}^{-1}\in (W^{1,\infty}(\Pdom))^d$. 
Then, for any $\tilde{u}\in H^r(\Pdom)$ we have
\begin{equation*}
\|\tilde{u}-\widetilde{S}^{\bf k}_{\bf p}\tilde{u}\|_{\Pdom} \leq C_{\bf G} \sum_{1\leq |{\bf j}|\leq r}\left(\sum_{i=1}^dC_{h_i,p_i,k_i,r} C_{{\bf G},i,r,{\bf j}}\right)\|\dpartial_1^{j_1}\cdots\dpartial_d^{j_d}\tilde{u}\|_{\Pdom},
\end{equation*}
for all $p_i\geq r-1$. 
\end{theorem}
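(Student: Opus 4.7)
The plan is to mirror the proof of Theorem~\ref{thm:mapL2}, replacing the global-smoothness ingredients with their bent-geometry counterparts. Set $u:=\tilde u\circ{\bf G}$. The assumption ${\bf G}\in(\mathcal{G}^{r,{\bf k}}_{\dknots}(\Ddom))^d$ implies that ${\bf G}$ is globally $W^{{\bf k}+1,\infty}(\Ddom)$ and elementwise $W^{r,\infty}(\sigma)$ for every $\sigma\in M_{\dknots}$. Together with $\tilde u\in H^r(\Pdom)$ and ${\bf G}^{-1}\in(W^{1,\infty}(\Pdom))^d$, the chain rule yields $u\in \mathcal{H}^{r,{\bf k}}_{\dknots}(\Ddom)$: on each element $\sigma$, $u\in H^r(\sigma)$, while the global $W^{{\bf k}+1,\infty}$-smoothness of ${\bf G}$ guarantees the required global $H^{\min\{r,k_i+1\}}$-regularity in the $i$-th variable.

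Next, apply Lemma~\ref{lem:map-generic} with $\Pi=S^{\bf k}_{\bf p}$ to reduce to the reference domain:
\begin{equation*}
\|\tilde u-\widetilde S^{\bf k}_{\bf p}\tilde u\|_{\Pdom}\leq \|\det\nabla{\bf G}\|_{L^\infty(\Ddom)}\|u-S^{\bf k}_{\bf p}u\|_{\Ddom}.
\end{equation*}
Since $u$ is in the bent Sobolev space, invoke Lemma~\ref{lem:broken-tensor} to get
\begin{equation*}
\|u-S^{\bf k}_{\bf p}u\|_{\Ddom}\leq \sum_{i=1}^d C_{h_i,p_i,k_i,r}\,\|\partial_i^r u\|_{\Ddom,\dknots}.
\end{equation*}

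The key step is to control $\|\partial_i^r u\|_{\Ddom,\dknots}$ via the multivariate Faà di Bruno formula of \cite{Constantine:1996}, applied elementwise. On each $\sigma\in M_{\dknots}$, where ${\bf G}\in (W^{r,\infty}(\sigma))^d$, that formula expresses $\partial_i^r u$ as
\begin{equation*}
\partial_i^r u=\sum_{1\leq|{\bf j}|\leq r}\bigl((\dpartial_1^{j_1}\cdots\dpartial_d^{j_d}\tilde u)\circ{\bf G}\bigr)\sum_{I(r,{\bf j})} r!\prod_{m=1}^r\frac{(\partial_i^m G_1)^{k_{m,1}}\cdots(\partial_i^m G_d)^{k_{m,d}}}{(k_{m,1}!\cdots k_{m,d}!)(m!)^{k_{m,1}+\cdots+k_{m,d}}}.
\end{equation*}
Taking $L^2(\sigma)$-norms, bounding the geometry factors by their $L^\infty(\sigma)$-norm, squaring, summing over $\sigma$, and extracting the maximum over $\sigma$ recasts those factors through the mesh-dependent norm \eqref{eq:inf-norm-mesh} used in the definition of $C_{{\bf G},i,r,{\bf j}}$. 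The triangle inequality in ${\bf j}$ then gives
\begin{equation*}
\|\partial_i^r u\|_{\Ddom,\dknots}\leq \sum_{1\leq|{\bf j}|\leq r} C_{{\bf G},i,r,{\bf j}}\,\|(\dpartial_1^{j_1}\cdots\dpartial_d^{j_d}\tilde u)\circ{\bf G}\|_{\Ddom}.
\end{equation*}

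Finally, a standard substitution converts each reference-domain $L^2$-norm back to the physical domain, producing a Jacobian factor bounded by $\|\det\widetilde\nabla{\bf G}^{-1}\|_{L^\infty(\Pdom)}$. Combining everything and using the definition of $C_{\bf G}$ yields the claimed estimate. The main subtlety, and likely the only point requiring care, is the elementwise application of Faà di Bruno together with the passage from elementwise $L^\infty$ bounds on the geometry derivatives to the mesh-dependent $L^\infty$ norm; the rest is a direct concatenation of Lemma~\ref{lem:map-generic}, Lemma~\ref{lem:broken-tensor}, and the substitution argument already used for Theorem~\ref{thm:mapL2}.
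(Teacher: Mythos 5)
Your proposal is correct and follows exactly the route the paper intends: the paper gives no separate proof of Theorem~\ref{thm:mapL2-gen}, stating only that one repeats the argument of Theorem~\ref{thm:mapL2} with Lemma~\ref{lem:broken-tensor} in place of Theorem~\ref{thm:tensorL2} and with the constants $C_{{\bf G},i,r,{\bf j}}$ measured in the mesh-dependent norm \eqref{eq:inf-norm-mesh}, which is precisely your concatenation of Lemma~\ref{lem:map-generic}, Lemma~\ref{lem:broken-tensor}, the elementwise Fa\`a di Bruno expansion, and the substitution back to $\Pdom$. The only point to phrase carefully is that passing from the elementwise bounds to the global estimate uses Minkowski's inequality in the broken $L^2$-norm before extracting the elementwise maxima, but that is the standard step you already identify.
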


Similar results can be obtained for tensor-product Ritz projections in the presence of a bent geometry. As before, it is a matter of applying the Ritz estimates from Section~\ref{sec:tensor} in combination with the operator in Lemma~\ref{lem:Gamma} and proper (Ritz extended) multivariate bent Sobolev spaces.
We omit these results to avoid repetition.
We just illustrate this with an example similar to Example~\ref{ex:mapRitz,r=2}.
\begin{example}\label{ex:splinemapRitz,r=2}
Let $d=2$ and $r=2$. Assuming
${\bf G}\in (\mathcal{S}_{{\bf p},\dknots})^2$ and ${\bf G}^{-1}\in (W^{1,\infty}(\Pdom))^2$,
for any $\tilde{u}\in H^2(\Pdom)$ and for $0\leq\ell_1,\ell_2\leq1$ we have
\begin{equation*}
\|\dpartial_1^{\ell_1}\dpartial_2^{\ell_2}(\tilde{u}-\widetilde{R}_{\bf p}\tilde{u})\|_{\Pdom}\leq C_{\bf G} \left(\frac{h}{\pi}\right)^{2-\ell_1-\ell_2} \sum_{1\leq |{\bf j}|\leq 2} \bigl(C_{{\bf G},1,2,{\bf j}}+C_{{\bf G},2,2,{\bf j}}+C_{{\bf G},12,2,{\bf j}}\bigr) \|\dpartial_1^{j_1}\dpartial_2^{j_2}\tilde{u}\|_{\Pdom},
\end{equation*}
for all $p_1,p_2\geq 1$ and $h:=\max\{h_1,h_2\}$. The constants in the above sum are the same as the ones in Examples~\ref{ex:mapL2,r=2} and~\ref{ex:mapRitz,r=2} but in the mesh-dependent norm \eqref{eq:inf-norm-mesh}.
\end{example}

\section{Multi-patch geometry} \label{sec:multipatch}
In this section we generalize our error estimates to the case of multi-patch domains with $C^0$ continuity across the patches. The arguments here are based on those found in \cite{Buffa:14,Takacs:2018}.

We start by explaining the general framework in the univariate case.
Let $\mathcal{Z}_\prec$ be a finite dimensional subspace of $L^2(a,b)$ as in \eqref{eq:Xsimpl} with $K$ as in \eqref{eq:Kint}.
For $\prec\geq 1$ we define the projector $Q_{\prec}:H^1(a,b)\to \mathcal{Z}_{\prec}$ by
\begin{equation}\label{def:Qproj}
Q_{\prec}u:= u(a) + KZ_{\prec-1}\partial u,
\end{equation}
where $K$ is the integral operator in \eqref{eq:Kint} and $Z_{\prec}$ the $L^2$-projector onto $\mathcal{Z}_{\prec}$. As we shall see momentarily, the projection \eqref{def:Qproj} is closely related to the Ritz projection for $q=1$ in \eqref{def:Ritz} and satisfies essentially the same properties. Additionally, we observe that $Q_{\prec}u(a) = u(a)$ and
\begin{equation}\label{eq:Q-bnd}
Q_{\prec}u(b)= u(a) + \int_a^bZ_{\prec-1}\partial u(x) dx = u(a) + \int_a^b\partial u(x) dx = u(b).
\end{equation}
Thus, $Q_{\prec}$ can be equivalently expressed as 
\begin{equation}\label{def:Qproj2}
Q_{\prec}u= u(b) - K^*Z_{\prec-1}\partial u.
\end{equation}
The interpolation at the boundary will be used to enforce $C^0$ continuity across the patches.
 Similar to the case $q=1$ of Theorem~\ref{thm:Ritz} we have the following error estimate.
\begin{lemma}\label{lem:errorQ}
Let $u\in H^r(a,b)$ for $r\geq 1$ be given. Then, 
\begin{align*}
\|u-Q_\prec u\|&\leq  \Cgen_{\prec-1,1} \Cgen_{\prec-1,r-1}\|\partial^ru\|,
\\
\|\partial(u-Q_\prec u)\|&\leq  \Cgen_{\prec-1,r-1}\|\partial^ru\|,
\end{align*}
for all $\prec\geq 1$ such that $\mathcal{P}_{r-2}\subseteq\mathcal{Z}_{\prec-1}$.
\end{lemma}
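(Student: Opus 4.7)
The second bound is essentially immediate. Differentiating the definition \eqref{def:Qproj} of $Q_\prec$ gives $\partial Q_\prec u = Z_{\prec-1}\partial u$, and hence
\begin{equation*}
\partial(u-Q_\prec u) = (I-Z_{\prec-1})\partial u.
\end{equation*}
Since $\partial u\in H^{r-1}(a,b)$ and $\mathcal{P}_{r-2}\subseteq\mathcal{Z}_{\prec-1}$ by hypothesis, applying Theorem~\ref{thm:L2} to $\partial u$ with projector $Z_{\prec-1}$ and exponent $r-1$ yields the stated seminorm estimate.

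For the $L^2$-bound, I plan to use an Aubin--Nitsche argument, but with a Dirichlet problem rather than the Neumann problem used in Lemma~\ref{lem:RitzL2}. This is dictated by the boundary interpolation property \eqref{eq:Q-bnd}, namely $Q_\prec u(a)=u(a)$ and $Q_\prec u(b)=u(b)$, which gives $e:=u-Q_\prec u$ vanishing at both endpoints. Let $w$ solve $-\partial^2 w = e$ with $w(a)=w(b)=0$. Integration by parts then gives $\|e\|^2=(\partial e,\partial w)$, with no boundary contribution precisely because $e(a)=e(b)=0$.

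The next ingredient is a Galerkin-type orthogonality: for every $v\in\mathcal{Z}_\prec$,
\begin{equation*}
(\partial(u-Q_\prec u),\partial v) = ((I-Z_{\prec-1})\partial u,\partial v) = 0,
\end{equation*}
which holds because relation \eqref{eq:Xsimpl} with $K$ as in \eqref{eq:Kint} gives $\partial\mathcal{Z}_\prec=\mathcal{Z}_{\prec-1}$, so $\partial v\in\mathcal{Z}_{\prec-1}$. Using this, $\|e\|^2=(\partial e,\partial(w-v))$ for any $v\in\mathcal{Z}_\prec$. Taking the infimum over such $v$ (equivalently, the infimum of $\|\partial w - z\|$ over $z\in\mathcal{Z}_{\prec-1}$), Cauchy--Schwarz gives
\begin{equation*}
\|e\|^2 \leq \|\partial e\|\,\|\partial w - Z_{\prec-1}\partial w\|.
\end{equation*}
Since $\partial w\in H^1(a,b)$ with $\partial^2 w=-e$, and since $\mathcal{P}_0\subseteq\mathcal{Z}_{\prec-1}$ automatically, Theorem~\ref{thm:L2} applied with $r=1$ yields $\|\partial w - Z_{\prec-1}\partial w\|\leq \Cgen_{\prec-1,1}\|e\|$. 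Dividing by $\|e\|$ gives $\|e\|\leq \Cgen_{\prec-1,1}\|\partial e\|$, and combining with the first bound produces the claimed $L^2$-estimate.

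The main subtlety is the choice of Dirichlet rather than Neumann boundary conditions in the duality step. Neumann would require $\int_a^b e\,dx=0$, which holds for the Ritz projector of \eqref{def:Ritz} by construction but is not guaranteed for $Q_\prec$; conversely, $Q_\prec$ interpolates at the endpoints, which makes Dirichlet conditions natural here. Verifying that $\partial\mathcal{Z}_\prec=\mathcal{Z}_{\prec-1}$ (so that Galerkin orthogonality is available) is the only structural property of the sequence \eqref{eq:Xsimpl} needed beyond what is already used in Section~\ref{sec:gen}.
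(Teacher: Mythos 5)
Your proof is correct, but it takes a different route from the paper's. The paper proves the $L^2$-bound purely at the operator level: using \eqref{def:Qproj2} it writes $u-Q_\prec u=-K^*(I-Z_{\prec-1})\partial u$, decomposes $\partial u=g+K^{r-1}f$ with $g\in\mathcal{P}_{r-2}$, and then exploits the idempotency $(I-Z_{\prec-1})^2=I-Z_{\prec-1}$ to factor $\|K^*(I-Z_{\prec-1})K^{r-1}f\|\leq\|K^*(I-Z_{\prec-1})\|\,\|(I-Z_{\prec-1})K^{r-1}\|\,\|f\|$, which yields the constant $\Cgen_{\prec-1,1}\Cgen_{\prec-1,r-1}$ directly. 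You instead run an Aubin--Nitsche duality argument with a Dirichlet auxiliary problem, correctly observing that the endpoint interpolation $e(a)=e(b)=0$ from \eqref{eq:Q-bnd} is what kills the boundary terms (in contrast to the Neumann problem used for $R^q_\prec$ in Lemma~\ref{lem:RitzL2}, which relies on the mean-value constraint instead), and that $\partial\mathcal{Z}_\prec=\mathcal{Z}_{\prec-1}$ supplies the Galerkin orthogonality $(\partial e,\partial v)=0$. All the ingredients you invoke are available: $\mathcal{P}_0\subseteq\mathcal{Z}_{\prec-1}$ holds for every $\prec\geq1$, the surjectivity $\partial\mathcal{Z}_\prec=\mathcal{Z}_{\prec-1}$ follows from $\partial K=I$, and Theorem~\ref{thm:L2} applied to $\partial w\in H^1$ gives exactly $\|\partial w-Z_{\prec-1}\partial w\|\leq\Cgen_{\prec-1,1}\|e\|$, so you land on the same constant. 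Your version makes the variational structure of $Q_\prec$ (its ``Ritz-like'' character and the role of the boundary interpolation) more transparent and aligns with the later observation that $Q_\prec=R^1_\prec$ when $\mathcal{P}_2\subseteq\mathcal{Z}_\prec$; the paper's version is shorter, avoids introducing the auxiliary boundary-value problem, and stays entirely within the kernel-operator framework of Section~\ref{sec:gen}. Your treatment of the derivative bound coincides with the paper's.
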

\begin{proof}
By the fundamental theorem of calculus we have $u=u(b) - K^*v$ for $v\in H^{r-1}(a,b)$. Thus, using \eqref{def:Qproj2},
\begin{equation*}
\|u-Q_\prec u\| = \|K^*v - K^*Z_{\prec-1}v\| = \|K^*(I-Z_{\prec-1})v\|.
\end{equation*}
Moreover, $v\in H^{r-1}(a,b)$ can be written as $v=g+K^{r-1}f$ for $g\in \mathcal{P}_{r-2}$ and $f\in L^2(a,b)$. Using $\mathcal{P}_{r-2}\subseteq\mathcal{Z}_{\prec-1}$ and $(I-Z_{\prec-1})^2=(I-Z_{\prec-1})$ we obtain 
\begin{align*}
\|K^*(I-Z_{\prec-1})v\|&= \|K^*(I-Z_{\prec-1})K^{r-1}f\|\leq \|K^*(I-Z_{\prec-1})\|\,\|(I-Z_{\prec-1})K^{r-1}\|\,\|f\|
\\
&=\|(I-Z_{\prec-1})K\|\,\|(I-Z_{\prec-1})K^{r-1}\|\,\|f\| = \Cgen_{\prec-1,1} \Cgen_{\prec-1,r-1}\|\partial^ru\|,
\end{align*}
which proves the first inequality. The second inequality follows from Theorem \ref{thm:L2} since $\partial Q_\prec = Z_{\prec-1}\partial$.
\end{proof}
Error estimates for spline spaces can be immediately obtained by replacing the constants in Lemma~\ref{lem:errorQ} with the constants derived for $q=1$ in Corollaries~\ref{cor:Ritz-lower} and~\ref{cor:smoothRitz}.

\begin{remark}
Since the Ritz projection in \eqref{def:Ritz} is uniquely defined, it follows from Lemma~\ref{lem:Stefan} and \eqref{eq:Q-bnd} that $Q_{\prec}=R^1_\prec$ whenever $\mathcal{P}_2\subseteq\mathcal{Z}_\prec$. Lemma~\ref{lem:errorQ} would in this case directly follow from Lemma~\ref{lem:Stefan} and Theorem~\ref{thm:Ritz}.
\end{remark}

We now move on to the bivariate case ($d=2$).
As before, we let ${\bf\prec}=(\prec_1,\prec_2)$ and define the tensor-product projector $Q_{\bf\prec}: H^1(a_1,b_1)\otimes H^1(a_2,b_2)\to\mathcal{Z}_{\prec_1}\otimes \mathcal{Z}_{\prec_2}$ by
\begin{equation*}
Q_{\bf\prec} := Q_{\prec_1}\otimes Q_{\prec_2}.
\end{equation*}
\begin{remark}\label{rmk:Q-tens-bnd}
As in \cite[Theorem~3.4]{Takacs:2018}, we conclude from \eqref{def:Qproj} and \eqref{eq:Q-bnd} that for all $u\in H^1(a_1,b_1)\otimes H^1(a_2,b_2)$,
\begin{itemize}
\item $u$ and $Q_{\bf\prec}u$ coincide at the four corners of $[a_1,b_1]\times[a_2,b_2]$, and
\item $Q_{\bf\prec}u$ restricted to any boundary edge of $\Ddom=(a_1,b_1)\times(a_2,b_2)$ coincide with the univariate projection onto that edge, e.g.,
\begin{equation*}
Q_{\bf\prec}u(a_1,\cdot) = Q_{\prec_2}u(a_1,\cdot).
\end{equation*}
\end{itemize}
\end{remark}
Using the same argument as for Theorem~\ref{thm:tensorRitz} we obtain the following error estimates for~$Q_{\bf\prec}$.
\begin{theorem}
Let $u\in H^r(\Ddom)$ for $r\geq2$ be given. If $\mathcal{P}_{r-2}\subseteq \mathcal{Z}_{\prec_1-1}\cap\mathcal{Z}_{\prec_2-1}$ for $\prec_1,\prec_2\geq1$, then
\begin{align*}
\|u-Q_{\bf \prec}u\|_{\Ddom}&\leq \Cgen_{\prec_1-1,1}\Cgen_{\prec_1-1,r-1}\|\partial_1^ru\|_{\Ddom}+\Cgen_{\prec_2-1,1}\Cgen_{\prec_2-1,r-1}\|\partial_2^ru\|_{\Ddom} 
\\
&\quad+\Cgen_{\prec_1-1,1}\Cgen_{\prec_2-1,1}\min\left\{\Cgen_{\prec_2-1,r-2}\|\partial_1\partial_2^{r-1}u\|_{\Ddom},\Cgen_{\prec_1-1,r-2}\|\partial_1^{r-1}\partial_2u\|_{\Ddom}\right\},
\end{align*}
and
\begin{align*}
\|\partial_1(u-Q_{\bf \prec}u)\|_{\Ddom}&\leq \Cgen_{\prec_1-1,r-1}\|\partial_1^ru\|_{\Ddom}+\Cgen_{\prec_2-1,1}\Cgen_{\prec_2-1,r-2}\|\partial_1\partial_2^{r-1}u\|_{\Ddom}, 
\\
\|\partial_2(u-Q_{\bf \prec}u)\|_{\Ddom}&\leq \Cgen_{\prec_1-1,1}\Cgen_{\prec_1-1,r-2}\|\partial_1^{r-1}\partial_2u\|_{\Ddom}+\Cgen_{\prec_2-1,r-1}\|\partial_2^ru\|_{\Ddom},
\\
\|\partial_1\partial_2(u-Q_{\bf \prec}u)\|_{\Ddom}&\leq  \Cgen_{\prec_1-1,r-2}\|\partial_1^{r-1}\partial_2u\|_{\Ddom}+\Cgen_{\prec_2-1,r-2}\|\partial_1\partial_2^{r-1}u\|_{\Ddom}. 
\end{align*}
\end{theorem}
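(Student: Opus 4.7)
The plan is to follow the proof of Theorem~\ref{thm:tensorRitz} almost verbatim, with $R^1_\prec$ replaced by $Q_\prec$ and Theorem~\ref{thm:Ritz} replaced by Lemma~\ref{lem:errorQ}. The only two properties of $R^1_\prec$ used in the proof (via Lemma~\ref{lem:tensorRitz}) are the commuting relation $\partial R^1_\prec = Z_{\prec-1}\partial$, together with its consequence $\|\partial R^1_\prec u\|\leq\|\partial u\|$, and the $L^2$-stability bound $\|R^1_\prec u\|\leq\|u\|+\Cgen_{\prec-1,1}\|\partial u\|$. Both remain valid for $Q_\prec$: the first is immediate from the definition \eqref{def:Qproj} using $\partial K = I$, while the second follows from the triangle inequality combined with the $r=1$ case of Lemma~\ref{lem:errorQ}, whose hypothesis $\mathcal{P}_{-1}\subseteq\mathcal{Z}_{\prec-1}$ is vacuous.

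First I would prove the bivariate analogue of Lemma~\ref{lem:tensorRitz} for $Q_{\bf \prec}$. The add-and-subtract identity
\begin{equation*}
u-Q_{\bf \prec}u = (u-Q_{\prec_1}u) + Q_{\prec_1}(u-Q_{\prec_2}u),
\end{equation*}
together with the $L^2$-stability of $Q_{\prec_1}$ applied to $u-Q_{\prec_2}u$ (with second variable as parameter) and the fact that $\partial_1$ commutes with $Q_{\prec_2}$, yields
\begin{equation*}
\|u-Q_{\bf \prec}u\|_{\Ddom} \leq \|u-Q_{\prec_1}u\|_{\Ddom} + \|u-Q_{\prec_2}u\|_{\Ddom} + \Cgen_{\prec_1-1,1}\|\partial_1 u - Q_{\prec_2}\partial_1 u\|_{\Ddom}.
\end{equation*}
The symmetric version obtained by swapping directions produces the $\min$ in the first bound. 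A completely parallel argument, using instead $\|\partial_1 Q_{\prec_1} v\|\leq\|\partial_1 v\|$, gives the two first-derivative estimates, while iterating the commuting relation in both directions produces $\partial_1\partial_2 Q_{\bf \prec} = Z_{\prec_1-1}Z_{\prec_2-1}\partial_1\partial_2$, so the mixed-derivative error splits via Theorem~\ref{thm:tensorL2} into two one-dimensional $L^2$-projection errors applied to $\partial_1\partial_2 u$.

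Then I would invoke Lemma~\ref{lem:errorQ} in each coordinate direction to bound each univariate factor in terms of pure $r$-th order derivatives of $u$, and apply Theorem~\ref{thm:L2} to each piece of the mixed-derivative splitting; the hypothesis $\mathcal{P}_{r-2}\subseteq\mathcal{Z}_{\prec_1-1}\cap\mathcal{Z}_{\prec_2-1}$ is precisely what both invocations require. Assembling these bounds yields the four stated inequalities line-for-line, mirroring the derivation of Theorem~\ref{thm:tensorRitz}. There is no substantive obstacle here beyond bookkeeping: since $Q_\prec$ shares with $R^1_\prec$ both the commuting relation and the $L^2$-stability bound, and Lemma~\ref{lem:errorQ} is the exact analogue of Theorem~\ref{thm:Ritz} for $q=1$, the argument transfers without modification.
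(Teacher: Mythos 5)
Your proposal is correct and follows exactly the route the paper takes: the paper proves this result by the single remark ``Using the same argument as for Theorem~\ref{thm:tensorRitz}'', and your write-up supplies precisely the details that remark leaves implicit, namely that $Q_\prec$ inherits from $R^1_\prec$ the commuting relation $\partial Q_\prec=Z_{\prec-1}\partial$ and the stability bound $\|Q_\prec u\|\leq\|u\|+\Cgen_{\prec-1,1}\|\partial u\|$ (via the $r=1$ case of Lemma~\ref{lem:errorQ}), after which the tensor-product splitting of Lemma~\ref{lem:tensorRitz} and the univariate estimates transfer verbatim. No gaps.
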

In the spirit of Corollary~\ref{cor:tensorL2}, using results from Sections~\ref{sec:low} and \ref{sec:max}, the above theorem can be used to obtain similar error estimates for spline spaces of any smoothness.

Finally, we are ready to consider the multi-patch setting in IGA. We 
assume that the physical domain $\Pdom\subset\RR^2$ is divided into $\npatches$ non-overlapping patches $\Pdom_i$, $i=1,\ldots, \npatches$. The patches are conforming, i.e., the intersection of the closures of $\Pdom_i$ and $\Pdom_j$ for $i\neq j$ is either (a) empty, (b) one common corner, or (c) the union of one common edge and two common vertices.
Following \cite{Takacs:2018}, we define the bent Sobolev space in the physical domain $\mathcal{H}^{2,1}(\Pdom)$ by
\begin{equation*}
\mathcal{H}^{2,1}(\Pdom):=\{\tilde{u}\in H^1(\Pdom) : \tilde{u}|_{\Pdom_i}\in H^2(\Pdom_i), \, i=1,\ldots,\npatches\}.
\end{equation*}

We assume that for each $i=1,\ldots, \npatches$ there is a geometry map ${\bf G}_i: \Ddom=(0,1)^2 \to \Pdom_i$, which can be continuously extended to the closure of $\Ddom$, such that
\begin{itemize}
\item  ${\bf G}_i\in (\mathcal{G}^{r,{\bf k}}_{\dknots}(\Ddom))^2$ and ${\bf G}_i^{-1}\in (W^{1,\infty}(\Pdom_i))^2$ (see Section~\ref{subsec:bent}), and

\item for any interface $\widetilde\Gamma_{ij}$ shared by $\Pdom_i$ and $\Pdom_j$, the parameterizations ${\bf G}_i$ and ${\bf G}_j$ are identical along that interface, i.e.,
${\bf G}_i^{-1}|_{\widetilde\Gamma_{ij}}={\bf R}_{ij}\circ{\bf G}_j^{-1}|_{\widetilde\Gamma_{ij}}$
where  ${\bf R}_{ij}$ is a rigid motion of the unit square to itself.
\end{itemize}
Similar to \eqref{eq:mappedSpace} we define 
\begin{equation*}
\widetilde{\mathcal{Z}}_{{\bf \prec},i}:=\{s\circ {\bf G}_i^{-1}: s\in \mathcal{Z}_{{\bf \prec},i}\},
\end{equation*}
and, following \cite{Buffa:14,Takacs:2018}, we require that these function spaces are fully matching on the interfaces, i.e.,
for each $\tilde{s}_i\in\widetilde{\mathcal{Z}}_{{\bf \prec},i}$ there exists $\tilde{s}_j\in\widetilde{\mathcal{Z}}_{{\bf \prec},j}$ 
such that along any interface $\widetilde\Gamma_{ij}$ shared by $\Pdom_i$ and $\Pdom_j$ we have
\begin{equation*}
{\tilde{s}}_i|_{\widetilde\Gamma_{ij}}={\tilde{s}}_j|_{\widetilde\Gamma_{ij}}.
\end{equation*}
\begin{remark} 
Under the assumptions on the geometry maps, the fully matching requirement at the interface $\widetilde\Gamma_{ij}$ is simply satisfied whenever for $l=i,j$
the univariate spaces ${\mathcal{Z}}_{\prec_{m_l},l}$, $ m_l\in\{1,2\}$, associated with ${\bf G}_l^{-1}({\widetilde\Gamma_{ij}})$ coincide. For instance, if ${\bf G}_i^{-1}({\widetilde\Gamma_{ij}})$ is a horizontal edge while ${\bf G}_j^{-1}({\widetilde\Gamma_{ij}})$ is a vertical one, then
${\mathcal{Z}}_{\prec_1,i}= {\mathcal{Z}}_{\prec_2,j}$.
\end{remark}
With the patch spaces $\widetilde{\mathcal{Z}}_{{\bf\prec},i}$ in place, we define the continuous isogeometric multi-patch space 
$\widetilde{\mathcal{Z}}_{\bf\prec}:\Pdom\to \RR $ as the continuously glued collection of those patch spaces, i.e.,
\begin{equation*}
\widetilde{\mathcal{Z}}_{\bf \prec}:=\{\tilde s\in C^0(\Pdom) : {\tilde s}|_{\Pdom_i}\in \widetilde{\mathcal{Z}}_{{\bf \prec},i}, \, i=1,\ldots, M  \}.
\end{equation*}
We let $\widetilde{Q}_{{\bf\prec}, i}:H^2(\Pdom_i) \to \widetilde{\mathcal{Z}}_{{\bf \prec},i}$
denote the projector defined by
\begin{equation*}
\widetilde{Q}_{{\bf\prec}, i} \tilde{u} := (Q_{{\bf\prec}, i}(\tilde{u}\circ {\bf G}_i))\circ {\bf G}_i^{-1},
\quad\forall \tilde{u}\in H^2(\Pdom_i),
\end{equation*} 
and for any ${\tilde u}\in \mathcal{H}^{2,1}(\Pdom)$
we define $\widetilde{Q}_{\bf\prec} (\tilde u)$ by
\begin{align*}
(\widetilde{Q}_{\bf\prec} \tilde u)|_{\Pdom_i} := \widetilde{Q}_{{\bf\prec}, i} \tilde u.
\end{align*}
With the same line of arguments as in \cite[Proposition~3.8]{Buffa:14} (see also  \cite[Lemma~3.4]{Takacs:2018}), by using Remark~\ref{rmk:Q-tens-bnd} together with the requirement that the patch spaces are fully matching, it follows that $\widetilde{Q}_{\bf\prec}{\tilde u}$ can be extended to a continuous function across the patch-interfaces and hence this is a projector onto $\widetilde{\mathcal{Z}}_{\bf\prec}$.

Similar to the mapped Ritz projection in the previous section we can now obtain error estimates for the projector $\widetilde{Q}_{\bf\prec}$. As a continuation of Example~\ref{ex:splinemapRitz,r=2} we can for instance obtain the following result.

\begin{example}\label{ex:splinemapQ,r=2}
Let $d=2$ and $r=2$. Assume
${\bf G}_i\in (\mathcal{S}_{{\bf p},\dknots})^2$ and ${\bf G}_i^{-1}\in (W^{1,\infty}(\Pdom_i))^2$ for $i=1,\ldots,\npatches$. Then,
for any $\tilde{u}\in H^2(\Pdom_i)$ and for $0\leq\ell_1,\ell_2\leq1$ we have
\begin{equation}\label{ineq:multipatch-ex}
\|\dpartial_1^{\ell_1}\dpartial_2^{\ell_2}(\tilde{u}-\widetilde{Q}_{\bf p}\tilde{u})\|_{\Pdom_i}\leq C_{{\bf G}_i} \left(\frac{h}{\pi}\right)^{2-\ell_1-\ell_2} \sum_{1\leq |{\bf j}|\leq 2} \bigl(C_{{\bf G}_i,1,2,{\bf j}}+C_{{\bf G}_i,2,2,{\bf j}}+C_{{\bf G}_i,12,2,{\bf j}}\bigr) \|\dpartial_1^{j_1}\dpartial_2^{j_2}\tilde{u}\|_{\Pdom_i},
\end{equation}
for all $p_1,p_2\geq 1$ and $i=1,\ldots, \npatches$. Here $h:=\max\{h_1,h_2\}$. The constants in the above estimate are the same as the ones in Example~\ref{ex:splinemapRitz,r=2}. By squaring both sides of \eqref{ineq:multipatch-ex} and summing over all the patches one can obtain a global estimate for $\tilde{u}\in\mathcal{H}^{2,1}(\Pdom)$.
\end{example}

\begin{remark}
If $\tilde{u}\in\mathcal{H}^{2,1}(\Pdom)$ is zero at the boundary then it follows from Remark~\ref{rmk:Q-tens-bnd} and the definition of $\widetilde{Q}_{\bf\prec}$ that $\widetilde{Q}_{\bf\prec}{\tilde u}$ is also zero at the boundary. Thus, we can obtain the same error estimates in the case of Dirichlet boundary conditions.
\end{remark}

\section{Conclusions} \label{sec:conclusion}
In this paper we have provided a priori error estimates with explicit constants for approximation with both classical spline spaces and with their isogeometric extensions. More precisely, we have considered error estimates in Sobolev (semi-)norms for $L^2$ and Ritz projections of any function in $H^r$ onto univariate and multivariate spline spaces, addressing single-patch and $C^0$ multi-patch configurations.~~

In order to obtain these estimates we have introduced an abstract framework to convert explicit constants in polynomial approximation to explicit constants in spline approximation of arbitrary smoothness and on arbitrary knot sequences. 
The constants in our spline error estimates are not sharp as they stem from constants in global polynomial approximation that are not sharp. However, our abstract framework is independent of the polynomial error estimate we start with. Whenever better constants are available for polynomial approximation, they can be simply plugged into our framework, resulting immediately in a sharper result for spline approximation.

Our results improve upon existing error estimates in the literature
as they fill the gap of the smoothness \cite{Buffa:11} and allow for more flexible $h$-$p$ refinement for spline spaces of maximal smoothness \cite{Takacs:2016,Sande:2019}. Moreover, they are consistent with the numerical evidence that smoother spline spaces exhibit a better approximation behavior per degree of freedom, which has been observed when solving practical problems by the IGA paradigm. Our error estimates also pave the way for extending to arbitrary smoothness and to arbitrary knot sequences the theoretical comparison, recently performed in \cite{Bressan:2019}, of the approximation power of different piecewise polynomial spaces commonly employed in Galerkin methods for solving partial differential equations.
In case of a mapped domain, the error estimates explicitly highlight the influence of the (derivatives of the) geometry map on the approximation properties of the considered isogeometric spaces.

Besides their direct theoretical interest, the presented results may have an 
impact on several practical aspects of the IGA paradigm, including the convergence analysis under different kinds of refinements, the definition of good mesh quality metrics, and the design of fast iterative (multigrid) solvers for the resulting linear systems.
We finally note that the range of possible applications of the presented results is not confined to the IGA context, since standard $C^0$ tensor-product finite elements are also covered as special cases.

\section*{Acknowledgements}
The authors are very grateful to Stefan Takacs (RICAM, Austria) for pointing out Lemma~\ref{lem:Stefan}.
This work was supported 
by the Beyond Borders Programme of the University of Rome Tor Vergata through the project ASTRID (CUP E84I19002250005)
and
by the MIUR Excellence Department Project awarded to the Department of Mathematics, University of Rome Tor Vergata (CUP E83C18000100006).
C.~Manni and H.~Speleers are members of Gruppo Nazionale per il Calcolo Scientifico, Istituto Nazionale di Alta Matematica.

\bibliography{nwidths}

\end{document}